\newtheorem{theorem}{Theorem}[section]
\newtheorem{lemma}[theorem]{Lemma}
\newtheorem{corollary}[theorem]{Corollary}
\newtheorem{proposition}[theorem]{Proposition}
\theoremstyle{definition}
\newtheorem{example}[theorem]{Example}
\newtheorem{definition}[theorem]{Definition}
\newtheorem{definition-lemma}[theorem]{Definition-Lemma}
\newtheorem{definition-theorem}[theorem]{Definition-Theorem}
\newtheorem{remark}[theorem]{Remark}
\newtheorem{convention}[theorem]{Convention}
\newtheorem*{ack}{Acknowledgements}
\newcommand{\ac}{\textup{!`}}
\newcommand{\ldb}{\mathopen{\{\!\!\{}}
\newcommand{\rdb}{\mathclose{\}\!\!\}}}
\newcommand{\ldbg}{\mathopen{\bigl\{\!\!\bigl\{}}
\newcommand{\rdbg}{\mathclose{\bigr\}\!\!\bigr\}}}
\title[Calabi-Yau algebras]
{Calabi-Yau algebras and the shifted noncommutative symplectic structure}
\author{Xiaojun Chen}
\address{Department of Mathematics, Sichuan University, Chengdu, Sichuan Province, 610064 P. R. China}
\email{xjchen@scu.edu.cn}
\author{Farkhod Eshmatov}
\address{Beijing Advanced Innovation Center for Imaging Theory and Technology,
Capital Normal University, Beijing, 100048 P. R. China}
\email{olimjon55@hotmail.com}
\date{February 8, 2018}
\begin{document}

\begin{abstract}
In this paper we show that for a Koszul Calabi-Yau algebra,
there is a shifted bi-symplectic structure in the sense 
of Crawley-Boevey-Etingof-Ginzburg \cite{CBEG}, 
on the cobar construction of its co-unitalized Koszul dual coalgebra,
and hence its DG representation schemes,
in the sense of Berest-Khachatryan-Ramadoss \cite{BKR}, have a shifted symplectic structure
in the sense of Pantev-To\"en-Vaqui\'e-Vezzosi \cite{PTVV}.

\noindent{\bf Keywords:} Shifted noncommutative symplectic structure, Calabi-Yau algebra

\end{abstract}

\maketitle

\setcounter{tocdepth}{1}\tableofcontents


\section{Introduction}\label{sect:intro}


The notion of Calabi-Yau algebras was introduced by Ginzburg in 2007. It is a noncommutative
generalization of affine Calabi-Yau varieties, and due to Van den Bergh, admits the so-called
``noncommutative Poincar\'e duality". In a joint paper
with Berest and Ramadoss \cite{BCER} we showed that for a Koszul Calabi-Yau
algebra, say $A$, there is a version of noncommutative Poisson structure, called
the {\it shifted double Poisson structure}, on its cofibrant resolution,
and hence
induces a shifted Poisson structure on the {\it derived representation schemes} of $A$,
a notion introduced by Berest, Khachatryan and Ramadoss in \cite{BKR}.
Here ``shifted" means there is a degree shifting, depending on the dimension of
the Calabi-Yau algebra, on the Poisson bracket.
Such a shifted Poisson structure was later further studied in \cite{CEEY} in much detail.

A natural question that arises now is, given a Koszul Calabi-Yau algebra, whether
or not the noncommutative Poisson
structure associated to it is noncommutative symplectic. To justify this question,
let us remind the reader of a version of noncommutative symplectic structure introduced by
Crawley-Boevey, Etingof and Ginzburg in \cite{CBEG}, which they called
the {\it bi-symplectic structure} in the paper.
A bi-symplectic structure on an associative algebra is
a closed 2-form in its Karoubi-de Rham complex
that induces an isomorphism between the space of noncommutative vector fields
(more precisely, the space of {\it double derivations}) and the space
of noncommutative 1-forms.
In \cite[Appendix]{VdB}, Van den Bergh showed that any bi-symplectic structure
naturally
gives rise to a double Poisson structure, which is completely analogous to the classical case.
However, the reverse is in general not true. Nevertheless, it is still interesting to ask
if this is true in the special case of Calabi-Yau algebras.

The second motivation for the paper comes from the 2012 paper \cite{PTVV}
of Pantev, To\"en, Vaqui\'e and Vezzosi, where they introduced 
the notion of {\it shifted symplectic structure} for derived stacks.
This not only generalizes classical symplectic geometry
to a much broader context, but also reveals many new features
on a lot of geometric spaces, especially on various moduli spaces that 
mathematicians are now studying.
As remarked by the authors, the shifted symplectic structure, if it exists,
always comes from the Poincar\'e duality of the corresponding source spaces;
they also outlined how to generalize shifted symplectic structures
to noncommutative spaces, such as Calabi-Yau categories.

Note that Calabi-Yau algebras are highly related to Calabi-Yau categories. 
For example, a theorem of Keller (see \cite[Lemma 4.1]{Keller1}) 
says that the bounded derived category of a Calabi-Yau algebra 
is a Calabi-Yau category. 
Applying the idea of \cite{PTVV}, we would expect that the noncommutative Poincar\'e duality
of a Calabi-Yau algebra
shall also play a role in the corresponding shifted noncommutative symplectic structure if it exists.

The main results of the current paper may be summarized as follows.
Let $A$ be a Calabi-Yau algebra of dimension $n$. 
Assume that $A$ is also Koszul, and denote its Koszul dual coalegbra by $A^{\ac}$.
Let $\tilde R=\Omega(\tilde A^{\ac})$ be the cobar construction of the co-unitalization $\tilde A^{\ac}$
of $A^{\ac}$.
In this paper, we show that $\tilde R$, 
rather than $\Omega(A^{\ac})$ as studied in \cite{BCER,CEEY}, 
has a $(2-n)$-shifted bi-symplectic structure.
Such a bi-symplectic structure
comes from the volume form of the noncommutative Poincar\'e duality of $A$,
and naturally induces a $(2-n)$-shifted symplectic structure on the 
DG representation schemes $\mathrm{Rep}_V(\tilde R)$, for all vector spaces
$V$ (see Theorem \ref{thm:mainthm1}).
By taking the corresponding trace maps we obtain a commutative diagram
(see \eqref{diag:tracefromHochschildcomplex} for more details)
\begin{equation}\label{digram:PDandshiftedsymplectic}
\xymatrixcolsep{4pc}
\xymatrix{
\mathrm{HH}^\bullet(A)\ar[r]^-{\mathrm{Tr}}\ar[d]^{\cong}
&\mathrm H^\bullet(\mathrm{Der}(\mathrm{Rep}_V(\tilde R))^{\mathrm{GL}})\ar[d]^{\cong}\\
\mathrm{HH}_{n-\bullet}(A)\ar[r]^-{\mathrm{Tr}}
&\mathrm H_{n-\bullet}(\Omega^1_{\mathrm{com}}(\mathrm{Rep}_V(\tilde R))^{\mathrm{GL}}),
}
\end{equation}
where the left hand side are the Hochschild cohomology and homology
of $A$, with the isomorphism
being Van den Bergh's noncommutative Poincar\'e duality, 
and the right hand side are the cohomology and homology of
the $\mathrm{GL}(V)$-invariant
complexes of vector fields and 1-forms on $\mathrm{Rep}_V(\tilde R)$ respectively.

By Van den Bergh's result mentioned above, the shifted bi-symplectic structure
induces a
shifted double Poisson structure on $\tilde R$; therefore there is a shifted Poisson
structure on $\mathrm{Rep}_V(\tilde R)$.
In this paper, we will study the deformation
quantization of such a shifted Poisson structure,
and show that it comes from the quantization of the ``functions"  
$\tilde R_\natural:=\tilde R/[\tilde R,\tilde R]$ of $\tilde R$ 
(see Theorem \ref{thm:existenceofquantization}). By Koszul duality,
the homology of $\tilde R_{\natural}$ minus the unit
is isomorphic to the cyclic homology of $A$, 
and thus we obtain a quantization of the latter as well.
This construction 
is inspired by the quantization of quiver representations.

This paper is organized as follows.
In \S\ref{sect:basicsofNCgeometry} we collect some basics of noncommutative geometry,
such as the noncommutative 1-forms and vector fields, and some operations, such as the contraction
and Lie derivative between them, then we recall the definition of bi-symplectic structure introduced by Crawley-Boevey 
et. al. in \cite{CBEG}.

In \S\ref{sect:Koszulduality} we recall the notion of Koszul algebras and some of their basic properties.
Let $A$ be a Koszul algebra over a field $k$. 
We give explicit formulas for the double derivations and 1-forms of $\tilde R$.
The commutator quotient spaces of them are identified with the Hochschild cohomology
and homology of $A$ respectively.

In \S\ref{sect:KoszulCY} we first recall the definition of Calabi-Yau algebras, 
and then show that if the Calabi-Yau algebra, say $A$, is Koszul, then its
noncommutative volume form gives a shifted bi-symplectic structure
on $\tilde R$, where $\tilde R$ is given as above.

In \S\ref{sect:defofsss} we first recall the DG representation schemes
of a DG algebra, which was introduced by Berest et. al. in \cite{BKR}.
Following the works \cite{BKR,CBEG}, we see that
if a DG algebra admits a shifted
bi-symplectic structure, then its DG representation schemes have a shifted
symplectic structure. We then apply it to the Koszul Calabi-Yau algebra case.

In \S\ref{sect:shiftedPoisson} and \S\ref{sect:quantization}
we study the shifted Poisson
structure on the representation schemes of $\tilde R$ and their quantization. 
We show that such quantizations
are induced by the quantization of $\tilde R_{\natural}$ as a Lie bialgebra. 
This is completely analogous to the papers of Schedler \cite{Schedler}
and Ginzburg-Schedler \cite{GS}, where the quantization
of the representation spaces of doubled quivers is constructed, which is
compatible with the quantization of the necklace Lie bialgebra of the quivers via the
canonical trace map.

In \S\ref{sect:DRep}, we briefly discuss some relationships
of the current paper with the series of papers by Berest and his collaborators
\cite{BCER,BFR,BKR,BR}, where the derived representation schemes
of associative algebras were introduced and studied.

In the last section, \S\ref{sect:example}, we give the two examples of Calabi-Yau alegebras,
namely, the 3- and 4-dimensional Sklyanin algebras, and study the corresponding
shifted bi-symplectic structure in some detail.

This paper is a sequel to \cite{BCER,CEEY}, where
the shifted noncommutative Poisson structure associated to Calabi-Yau algebras
was studied; however, in the current paper we try to
be as self-contained as possible.
When we were in the final stage of the paper, we learned that 
Y. T. Lam in his thesis \cite{Lam}
as well as W.-K. Yeung in his paper \cite{Yeung}
have obtained several results which are similar to ours.
Nevertheless, the main results and methods of theirs
and ours are quite different.

\begin{convention}
Throughout the paper, $k$ is a field of characteristic zero, though in a lot of cases it need not
necessarily to be so. All morphisms and 
tensors are over $k$ unless otherwise specified. 
DG algebras (respectively DG coalgebras) are unital and
augmented (respectively co-unital and co-augmented), with
the degree of the differential being $-1$.
For a chain complex, its homology is denoted by $\mathrm H_\bullet(-)$, and its cohomology
is given by $\mathrm H^\bullet(-):=\mathrm H_{-\bullet}(-)$.
\end{convention}

\section{Some basics of noncommutative geometry}\label{sect:basicsofNCgeometry}

In this section, we recall some basic notions in noncommutative geometry.
They are mostly taken from \cite{CBEG,VdB,VdB2}; 
here we work in the differential graded (DG for short) setting.

\subsection{Noncommutative differential forms}

Suppose that $(R, \partial)$ is a DG associative algebra over $k$,
where $\partial$ is the differential.
The bimodule $\Omega^1_{\mathrm{nc}}R$ of {\it noncommutative 1-forms} of $R$ is
a DG $R$-bimodule
generated by symbols $d x$ of degree $|x|$, linear in $x$ for all $x\in R$ and 
subject to the relations
\begin{eqnarray*}
d  (xy) &=& (d  x)y+x (d  y),\\
\partial(dx)&=&d(\partial x),
\end{eqnarray*}
for all $x, y\in R$.
In this paper, we also assume $d 1 =0$.

Alternatively,
$\Omega^1_{\mathrm{nc}}R$ is
the kernel of the multiplication $R\otimes R\to R$,
which is a subcomplex
of $R$-bimodules in $R\otimes R$, 
generated by $1\otimes x-x\otimes 1$ for all
$x\in R$, with differential induced from $\partial$.
The identification of two $R$-bimodules constructed above is given by
$dx\mapsto 1\otimes x-x\otimes 1$. 

Let $\Omega^1_{\mathrm{nc}}R[-1]$ be 
the suspension of $\Omega^1_{\mathrm{nc}}R$, i.e.
the degrees of $\Omega^1_{\mathrm{nc}}R$ are shifted up by one.
Sometimes we also write it in the form $\Sigma\Omega^1_{\mathrm{nc}}R$,
which means the degree-shifting operator applies from the left. 
(In what follows, for a graded vector space $V_\bullet$,
$V[n]$ is a graded vector space with $(V[n])_i=V_{i+n}$.)

Let $\Omega^\bullet_{\mathrm{nc}}R:=T_R(\Omega^1_{\mathrm{nc}} R[-1])$
be the free tensor algebra generated by $\Omega^1_{\mathrm{nc}}R[-1]$ over $R$.
Let
$$
d:R\to \Omega^1_{\mathrm{nc}}R[-1],\;
x\mapsto d(x)= \Sigma dx.
$$
Then by our sign convention, it is easy to check\footnote{Recall that
for a graded $R$-bimodule $M$, $\Sigma M$ is a graded $R$-bimodule with
$a\cdot \Sigma m\cdot b=(-1)^{|a|}\Sigma (amb)$, for homogeneous $a, b\in R$ and $m\in M$.}
\begin{eqnarray*}
d(xy)&=&d(x) \cdot y+(-1)^{|x|}x\cdot d(y),\\
d\circ \partial(x)&=&-\partial \circ d(x),
\end{eqnarray*}
for all $x, y\in R$.

Now let $d(d(x))=0$ for all $x\in R$.
Extend $d$ to be a map $d:\Omega^\bullet_{\mathrm{nc}}R\to \Omega^{\bullet+1}_{\mathrm{nc}}R$
by derivation.
Also, extend $\partial$ to $\Omega^\bullet_{\mathrm{nc}}R$ by derivation.
Then we have 
$$d^2=0,\quad \partial^2=0,\quad\mbox{and}\quad d\circ \partial+\partial \circ d=0.$$
In general, $d$ is called the {\it de Rham differential} of $\Omega^\bullet_{\mathrm{nc}}R$,
and for convenience, $\partial$ is called the {\it internal differential} of $\Omega^\bullet_{\mathrm{nc}}R$.

Note that $\partial$ and $d$ have degrees $-1$ and $1$ respectively,
which make
$\Omega^\bullet_{\mathrm{nc}}R$ 
into a mixed DG algebra and
is called the set of {\it noncommutative
differential forms} of $R$.
Let
$$\mathrm{DR}^\bullet_{\mathrm{nc}}R:=\big(\Omega^\bullet_{\mathrm{nc}}R\big)_{\natural}
=\Omega^\bullet_{\mathrm{nc}}R/[\Omega^\bullet_{\mathrm{nc}}R,\Omega^\bullet_{\mathrm{nc}}R]$$
be the graded commutator quotient space.
Since $\partial$ and $d$
are both derivations with respect to the product on $\Omega^\bullet_{\mathrm{nc}}R$,
$\mathrm{DR}^\bullet_{\mathrm{nc}}R$ with the induced differentials, still denoted by $(\partial,d)$, is 
a mixed complex, and
is called the {\it Karoubi-de Rham complex} of $R$.

\subsection{Noncommutative polyvector fields}

Following \cite{CBEG,VdB}, the noncommutative vector fields
on an associative algebra are given by the {\it double derivations}.
By definition,
the space of double derivations $\mathbb D\mathrm{er}\; R$ of $R$
is the set of derivations
$\mathrm{Der}(R, R\otimes R)$.
Since the map $R\to \Omega^1_{\mathrm{nc}}R, x\mapsto dx=1\otimes x-x\otimes 1$
is a universal derivation, meaning that every derivation of $R$ factors through 
$\Omega^1_{\mathrm{nc}}R$ ({\it c.f.} \cite[Proposition 3.3]{Quillen}),
we have that
$$
\mathbb D\mathrm{er}\; R\cong\mathrm{Hom}_{R^e}(\Omega^1_{\mathrm{nc}} R, R\otimes R),
$$
where $R^e$ is the enveloping algebra $R\otimes R^{\mathrm{op}}$.
In the above notation, we have used the {\it outer $R$-bimodule} structure on
$R\otimes R$; namely, for any $x, y, u, v\in R$,
$$
u\cdot (x\otimes y)\cdot v:=ux\otimes yv.
$$
$R\otimes R$ has also an {\it inner $R$-bimoule} structure, which is given
by
$$
u * (x\otimes y)* v:=(-1)^{|u||x|+|v||y|+|u||v|}xv\otimes uy.
$$
With the inner $R$-bimodule structure on $R\otimes R$,
$\mathbb D\mathrm{er}\; R$
is a DG $R$-bimodule.
Now let
$
T_R(\mathbb D\mathrm{er}\; R[1])
$
be the free DG algebra generated by $\mathbb D\mathrm{er}\; R[1]$
over $R$, which is called the space of {\it (noncommutative) polyvector fields} of $R$.

\subsection{Actions of noncommutative vectors on noncommutative forms}

Analogous to the classical case, the noncommutative vectors act on
noncommutative forms by contraction and by Lie derivative, which together
satisfy the noncommutative version of the Cartan identity (see Lemma \ref{Cartanformula}).

\subsubsection{Contraction and Lie derivative}

For any $\Theta\in\mathbb D\mathrm{er}\; R[1]$, 
the {\it contraction} operator
$$
i_{\Theta}:\Omega^\bullet_{\mathrm{nc}}R\longrightarrow
\Omega^{\bullet}_{\mathrm{nc}}R\otimes
\Omega^{\bullet}_{\mathrm{nc}}R.$$
is given as follows: first,
let
$$i_{\Theta}(a)=0,\quad\mbox{for all}\; a\in R$$
and
$$
i_{\Theta}: \Omega^1_{\mathrm{nc}} R[-1]\to  R\otimes R,\; 
d(a)\mapsto \Theta(a),
$$
where we have used the fact that
$\mathbb D\mathrm{er}\; R[1]=\mathrm{Hom}_{R^e}(\Omega^{1}_{\mathrm{nc}} R[-1], R\otimes R).$
Second, extend $i_{\Theta}$ to an $R$-linear operator on 
$\Omega^\bullet_{\mathrm{nc}}R$ in the natural way,
that is,
if we write
$$i_{\Theta} a=\sum i_{\Theta}'a\otimes i_{\Theta}'' a, \quad\mbox{for}\; a\in\Omega^1_{\mathrm{nc}} R,$$
then
$$
i_{\Theta}(a_1\cdots a_n)=\sum_{1\le k\le n}(-1)^{\sigma_k}
a_1\cdots a_{k-1}(i_{\Theta}'a_k)\otimes (i_{\Theta}'' a_k) a_{k+1}\cdots a_n,
$$
for $a_1,\cdots, a_n\in \Omega^1_{\mathrm{nc}} R$,
where $(-1)^{\sigma_k}$ is the Koszul sign.
Recall that in general, the Koszul sign comes from switching the positions of graded elements:
for two graded vector spaces $V, W$, the isomorphism
$V\otimes W\to W\otimes V$ is given by $v\otimes w\mapsto (-1)^{|v||w|}w\otimes v$;
for example, in the formula above, $(-1)^{\sigma_k}=(-1)^{|\Theta|(|a_1|+\cdots+|a_{k-1}|)}$.
In what follows, we will sometimes omit its precise value if it is clear from the context.

Next, given $\Theta\in\mathbb D\mathrm{er}\; R[1]$
the {\it Lie derivative} of $\Theta$ on the noncommutative differential forms
is given by
\begin{eqnarray*}
L_{\Theta}(x_0dx_1\cdots dx_n)
&:=&
\Theta'(x_0)\otimes\Theta''(x_0)dx_1\cdots dx_n\\
&&+\sum_{1\le k\le n}(-1)^{\sigma_k}\big(x_0dx_1\cdots dx_{k-1}d\Theta'(x_k)\otimes\Theta''(x_k)dx_{k+1}\cdots dx_n\\
&&\quad\quad+(-1)^{|\Theta'(x_k)|}x_0dx_1\cdots dx_{k-1} \Theta'(x_k)\otimes d\Theta''(x_k) dx_{k+1}\cdots dx_n\big).
\end{eqnarray*}

\subsubsection{Reduced contraction and Lie derivative}

Now, for a graded algebra $A$ and $c=c_1\otimes c_2\in A\otimes A$,
let
$^\circ c=(-1)^{|c_1||c_2|}c_2c_1$,
and given a map $\phi: A\to A\otimes A$, write
$^\circ\phi: A\to A: c\mapsto\;\!^\circ(\phi(c))$.

Define the {\it reduced contraction operator} $\iota$ 
and the {\it reduced Lie derivative} $\mathscr L$ by
\begin{equation}\label{formula:reducedcontraction}
\iota_{\Theta}(-): \Omega^\bullet_{\mathrm{nc}}R \to  \Omega^{\bullet-1}_{\mathrm{nc}}R,\;
 \omega \mapsto \iota_{\Theta}\omega=\;\!^{\circ}(i_{\Theta}(\omega))
\end{equation}
and
\begin{equation}
\mathscr L_{\Theta}(-): \Omega^\bullet_{\mathrm{nc}}R \to  \Omega^{\bullet}_{\mathrm{nc}}R,\;
 \omega \mapsto \mathscr L_{\Theta}\omega=\;\!^\circ(L_{\Theta}(\omega))
\end{equation}
respectively.
More explicitly,
for $a_1, a_2,\cdots, a_q\in\Omega^1_{\mathrm{nc}} R$,
\begin{equation*}
\iota_{\Theta}(a_1a_2\cdots a_r)=\sum_{k}(-1)^{\sigma_k}
(i_{\Theta}''a_k)a_{k+1}\cdots a_ra_1\cdots a_{k-1}(i_{\Theta}'a_k),
\end{equation*}
and similarly for $\mathscr L_{\Theta}(a_1a_2\cdots a_r)$
(see \cite[(2.8.4) and (2.8.5)]{CBEG}).
The following two lemmas are proved in \cite{CBEG}.

\begin{lemma}[\cite{CBEG} Lemma 2.8.6]
\begin{enumerate}
\item[$(1)$] The reduced contraction defined above only
depends on the image of $\omega$
in $\mathrm{DR}_{\mathrm{nc}}^\bullet R $;
in other words, $\iota_{\Theta}$ descends to a well-defined map
$\mathrm{DR}_{\mathrm{nc}}^\bullet R \to  \Omega^{\bullet-1}_{\mathrm{nc}}R$.
\item[$(2)$] For $\omega\in\mathrm{DR}^n_{\mathrm{nc}} R $,
the map
$\Theta\mapsto\iota_{\Theta}\omega$
gives an $R$-bimodule morphism
$
\mathbb D\mathrm{er}\; R[1]\to\Omega^{n-1}_{\mathrm{nc}} R 
$.
\end{enumerate}
\end{lemma}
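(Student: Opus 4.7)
The plan is to prove both statements by direct computation, unpacking the explicit formula for $i_\Theta$ on a product of $1$-forms and exploiting the cyclic character of the twisting operation ${}^\circ$ built into the definition of $\iota_\Theta$.

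For part (1), it suffices to check that $\iota_\Theta(uv) = (-1)^{|u||v|}\iota_\Theta(vu)$ for homogeneous monomials $u,v\in\Omega^\bullet_{\mathrm{nc}}R$, since the graded commutator subspace is spanned by such differences. Writing $u=x_0\,dx_1\cdots dx_m$ and $v=y_0\,dy_1\cdots dy_p$, one expands $i_\Theta(uv)$ using the formula in the text, and splits the resulting sum according to whether the contracted letter $dx_k$ lies in the $u$-factor or the contracted letter $dy_l$ lies in the $v$-factor. A typical $u$-type summand in $i_\Theta(uv)$ takes the shape $\bigl(x_0\,dx_1\cdots dx_{k-1}\Theta'(x_k)\bigr)\otimes\bigl(\Theta''(x_k)\,dx_{k+1}\cdots dx_m\,v\bigr)$, whereas the corresponding summand at position $k$ in $i_\Theta(vu)$ has shape $\bigl(v\,x_0\,dx_1\cdots dx_{k-1}\Theta'(x_k)\bigr)\otimes\bigl(\Theta''(x_k)\,dx_{k+1}\cdots dx_m\bigr)$. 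Applying ${}^\circ$, which swaps the two tensor factors with a Koszul sign and then multiplies back in $\Omega^{\bullet-1}_{\mathrm{nc}}R$, both expressions produce the same cyclic rotation $\Theta''(x_k)\,dx_{k+1}\cdots dx_m\,v\,x_0\,dx_1\cdots dx_{k-1}\Theta'(x_k)$, modulo a global Koszul sign $(-1)^{|u||v|}$ arising from commuting $v$ past $u$. The same matching works term by term on the $v$-type summands, so $\iota_\Theta(uv) - (-1)^{|u||v|}\iota_\Theta(vu) = 0$, and $\iota_\Theta$ descends to $\mathrm{DR}^\bullet_{\mathrm{nc}}R$.

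For part (2), once part (1) is established the map $\Theta\mapsto\iota_\Theta\omega$ is well-defined for $\omega\in\mathrm{DR}^n_{\mathrm{nc}}R$. It remains to verify $R$-bilinearity with respect to the inner bimodule structure on $\mathbb{D}\mathrm{er}\,R$ and the outer one on $\Omega^{n-1}_{\mathrm{nc}}R$. By definition of the inner structure one has $(r\ast\Theta\ast s)(a) = (-1)^\epsilon\,\Theta'(a)\,s\otimes r\,\Theta''(a)$ for $r,s\in R$. Substituting this into the formula for $i_\Theta$ places $s$ at the right end of each first tensor factor and $r$ at the left end of each second tensor factor. After applying ${}^\circ$ and multiplying, these scalars migrate to the extreme left and right of the resulting monomial in $\Omega^{\bullet-1}_{\mathrm{nc}}R$, yielding $\iota_{r\ast\Theta\ast s}\omega = (-1)^\epsilon\,r\cdot\iota_\Theta\omega\cdot s$, which is precisely the required bimodule property. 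Additivity in $\Theta$ is immediate from the linearity of $i_\Theta$ in $\Theta$ and of ${}^\circ$.

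The main obstacle will be keeping track of the Koszul signs. They come from three sources---the intrinsic degrees of the form variables and of $\Theta$, the explicit sign factor $(-1)^{\sigma_k}$ in the definition of $i_\Theta$, and the sign $(-1)^{|c_1||c_2|}$ built into ${}^\circ$---and must all be reconciled when matching the $u$-summands of $\iota_\Theta(uv)$ with those of $\iota_\Theta(vu)$ in part (1), and similarly when transporting $r$ and $s$ through monomials of $\Omega^\bullet_{\mathrm{nc}}R$ in part (2). The conceptual picture---that the cyclic reduction ${}^\circ$ turns $i_\Theta$ into a cyclically invariant operation on $\Omega^\bullet_{\mathrm{nc}}R$ equivariant for the bimodule actions---is transparent; the actual difficulty is the combinatorial sign bookkeeping, which can be organized following the pattern laid out in \cite{CBEG}.
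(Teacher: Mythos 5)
The paper does not prove this lemma at all---it simply states ``The following two lemmas are proved in \cite{CBEG}''---so the only comparison available is with the argument in \cite{CBEG} itself, which your computation reproduces. Your two key observations are exactly the right ones and are correct: after applying $^{\circ}$, each summand of $i_{\Theta}(a_1\cdots a_n)$ becomes the cyclic word $(i''_{\Theta}a_k)a_{k+1}\cdots a_n a_1\cdots a_{k-1}(i'_{\Theta}a_k)$, which is manifestly invariant (up to the Koszul sign) under cyclic rotation of the input, giving (1); and the inner bimodule action $(r\ast\Theta\ast s)(a)=\pm\,\Theta'(a)s\otimes r\,\Theta''(a)$ places $s$ and $r$ so that the reversal in $^{\circ}$ moves them to the outer ends, giving (2). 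The deferred sign bookkeeping is routine and does close, so there is no gap.
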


\begin{lemma}[\cite{CBEG} Lemma 2.8.8(i)]\label{Cartanformula}
Let $\iota$ and $\mathscr L$ be as above. Then for any $\Theta\in\mathbb D\mathrm{er}\; R[1]$,
we have 
$$
d\circ \iota_{\Theta}+\iota_{\Theta}\circ d=\mathscr L_{\Theta}\quad\mbox{and}\quad
d\circ\mathscr L_{\Theta}=\mathscr L_{\Theta}\circ d.
$$
\end{lemma}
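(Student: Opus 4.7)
The plan is to establish Cartan's magic formula $d \circ \iota_\Theta + \iota_\Theta \circ d = \mathscr{L}_\Theta$ by checking it on generators of $\Omega^\bullet_{\mathrm{nc}} R$ and extending via Leibniz rules, and then to deduce the commutation $d \circ \mathscr{L}_\Theta = \mathscr{L}_\Theta \circ d$ as a formal consequence of the first identity together with $d^2 = 0$.

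For the first identity, since $\Omega^\bullet_{\mathrm{nc}} R$ is generated over $R$ by elements $a$ and $da$ with $a \in R$, and since $d$, $i_\Theta$, $L_\Theta$ all satisfy appropriate signed Leibniz rules, it suffices to verify the identity on a general monomial $a_0\, da_1 \cdots da_n$ by induction on $n$. In the base case $n = 0$, $\iota_\Theta(a_0) = 0$ by definition, so $(d \iota_\Theta + \iota_\Theta d)(a_0) = \iota_\Theta(da_0) = {}^\circ i_\Theta(da_0) = {}^\circ \Theta(a_0)$, which matches $\mathscr{L}_\Theta(a_0) = {}^\circ L_\Theta(a_0) = {}^\circ \Theta(a_0)$ by the explicit definition of $L_\Theta$ on $0$-forms. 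The inductive step consists in expanding both sides using the Leibniz rules and comparing: the crucial point is that summands which a priori differ in $\Omega^\bullet_{\mathrm{nc}} R$ by cyclic rearrangements of tensor factors become equal in $\Omega^{\bullet - 1}_{\mathrm{nc}} R$ after applying the reduction ${}^\circ$, since ${}^\circ$ is by construction cyclically symmetric on tensor products, and the various bimodule-structure terms on $R \otimes R$ reassemble into the formula for $L_\Theta$ exactly in this way.

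The main obstacle is sign bookkeeping: one must carefully track the Koszul signs produced by the degree shift $\Theta \in \mathbb{D}\mathrm{er}\, R[1]$, the graded Leibniz rule for $d$, and the distinction between the outer and inner $R$-bimodule structures on $R \otimes R$ that enter into the definitions of $i_\Theta$ and of $\mathbb{D}\mathrm{er}\, R$. A convenient organizing principle is to first prove a non-reduced analogue of the identity as an equality of maps $\Omega^\bullet_{\mathrm{nc}} R \to \Omega^\bullet_{\mathrm{nc}} R \otimes \Omega^\bullet_{\mathrm{nc}} R$ (with the outer bimodule structure), and only then apply ${}^\circ$ to land in $\Omega^{\bullet - 1}_{\mathrm{nc}} R$ and descend to the commutator quotient $\mathrm{DR}^\bullet_{\mathrm{nc}} R$.

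The second identity follows from the first by direct formal manipulation using $d^2 = 0$:
$$d \circ \mathscr{L}_\Theta = d \circ (d \circ \iota_\Theta + \iota_\Theta \circ d) = d \circ \iota_\Theta \circ d = (d \circ \iota_\Theta + \iota_\Theta \circ d) \circ d = \mathscr{L}_\Theta \circ d,$$
where at each step one simply uses $d^2 = 0$ to drop the terms $d^2 \circ \iota_\Theta$ and $\iota_\Theta \circ d^2$. This is the classical derivation of $d\mathscr{L} = \mathscr{L}d$ from Cartan's magic formula, and no additional noncommutative phenomenon enters here.
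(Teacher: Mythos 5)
Your proposal is correct and takes the same route as the source: the paper does not prove this lemma but quotes it from \cite{CBEG} (Lemma 2.8.8(i)), where the argument is precisely the one you outline --- verify the Cartan identity on monomials $a_0\,d a_1\cdots d a_n$ via the Leibniz rules, using that the reduction ${}^\circ$ identifies the cyclically rotated splittings of a tensor product, and then obtain $d\circ\mathscr L_{\Theta}=\mathscr L_{\Theta}\circ d$ formally from $d^2=0$. The only caveat is that your inductive step is a description of the sign bookkeeping rather than the computation itself, but the mechanism you single out (cyclic invariance of ${}^\circ$ reassembling the outer/inner bimodule terms into the formula for $L_{\Theta}$) is exactly the right one.
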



Moreover, from the definitions of $\iota$ and $\mathscr L$, it is clear
that both of them respect $\partial$. 
With these preparations, we now recall the definition of {\it bi-symplectic structures} 
of Crawley-Boevey, Etingof and Ginzburg introduced in \cite{CBEG} (we here rephrase it
for DG algebras):

\begin{definition}[Bi-symplectic structure]
Suppose $R$ is a DG associative algebra.
A closed form $\omega\in\mathrm{DR}^2_{\mathrm{nc}} R $ of total degree $2-n$
is called an {\it $n$-shifted bi-symplectic structure}
if the
map
\begin{equation}\label{isofrombisymplectic}
\iota_{(-)}\omega: \mathbb{D}\mathrm{er}\; R[1] \to (\Omega^1_{\mathrm{nc}}R[-1])[2-n],\;
\Theta \mapsto \iota_{\Theta}\omega
\end{equation}
is a quasi-isomorphism of complexes of $R$-bimodules.
\end{definition}

The reader may refer to Remark \ref{rmk:degreeshifting}
for some discussions on degree shifting in the above definition.

In general, it is difficult to check the existence
of a bi-symplectic structure for a given DG algebra. The difficulty 
lies in the fact there is in general no closed formula for the noncommutative
vector fields and 1-forms. 
However, for free
associative algebras, such as the path algebra of a quiver, both of
them can be explicitly written down. This leads Crawley-Boevey et. al. \cite{CBEG}
to give an explicit identification of  them, and hence to give a bi-symplectic
structure on the path algebra of a doubled quiver. 
In the following two sections, we generalize their construction to the DG case, where Koszul
Calabi-Yau algebras naturally appear.

\section{Koszul duality}\label{sect:Koszulduality}

Koszul duality was originally introduced by Priddy to
compute the Hochschild
homology and cohomology of associative algebras.
Nowadays it plays an increasingly important role in the study of
noncommutative algebraic geometry.
Let $A$ be a Koszul algebra, and $A^{\ac}$ its Koszul dual coalgebra.
Let $\tilde R$ be the cobar construction of $\tilde A^{\ac}$, where $\tilde A^{\ac}$ is the co-unitalization
of $A^{\ac}$. In this section, we give explicit formulas for $\Omega^1_{\mathrm{nc}}\tilde R$
and $\mathbb D\mathrm{er}\;\tilde R$, which are very much related to the
Hochschild homology and cohomology of $A$
(see Propositions \ref{Prop:cotangentcomplex}
and \ref{Prop:tangentcomplex} below).

\subsection{Koszul algebra}\label{subsect:Koszul}

Let $W$ be a finite-dimensional vector space over $k$.
Denote by $TW$ the free algebra generated by $W$ over $k$.
Let $Q$ be a subspace of $W\otimes W$, and let
$(Q)$ be the two-sided ideal generated by $Q$ in $TW$.
Then the quotient algebra
$A:= TW/(Q)$
is called
a {\it quadratic algebra}.

Consider the subspace
$$U=\bigoplus_{n=0}^\infty U_n:=
\bigoplus_{n=0}^\infty \bigcap_{i+j+2=n}W^{\otimes i}\otimes Q\otimes
W^{\otimes j}$$
of $TW$,
then $U$ is a coalgebra whose coproduct is induced from
the de-concatenation of the tensor products.
The {\it Koszul dual coalgebra} of $A$, denoted
by $A^{\ac}$, is
$$
A^{\ac}=\bigoplus_{n=0}^\infty \Sigma^{\otimes n} (U_n).
$$
$A^{\ac}$ has a graded coalgebra structure induced from that of $U$ with
$$
(A^{\ac})_0=k, \quad (A^{\ac})_1=\Sigma W, \quad (A^{\ac})_2=(\Sigma\otimes\Sigma)(Q),\quad\cdots\cdots
$$

The {\it Koszul dual algebra} of $A$, denoted by $A^!$,
is just the linear dual space of $A^{\ac}$, which is then a graded algebra.
More precisely,
let $W^*=\mathrm{Hom}(W, k)$ be the linear dual space of $W$,
and let $Q^\perp$ denote
the space of annihilators of $Q$ in $W^*\otimes W^*$.
Shift the grading of $W^*$ down by one, denoted by $\Sigma^{-1}W^*$,
then
$$
A^!=T(\Sigma^{-1}W^*)/(\Sigma^{-1}\otimes\Sigma^{-1}\circ Q^{\perp}).
$$

Choose a basis $\{e_i\}$ for $W$, and let $\{e_i^*\}$ be their duals in $W^*$.
Let $\{\xi_i\}$ be the basis in $\Sigma^{-1}W^*$ corresponding to $\{e_i^*\}$, i.e.
$\xi_i=\Sigma^{-1}e_i^*$.
There is a chain complex associated to $A$, called the {\it Koszul complex}:
\begin{equation}\label{Koszul_complex}
\xymatrix{
\cdots\ar[r]^-{b'}&
A\otimes A^{\ac}_{i+1}\ar[r]^-{b'}&
A\otimes A^{\ac}_{i}\ar[r]^-{b'}&
\cdots\ar[r]&
A\otimes A^{\ac}_0\ar[r]^-{b'}& k,
}
\end{equation}
where for any $r\otimes u\in A\otimes A^{\ac}$,
$b'(r\otimes u)=\sum_i re_i\otimes\xi_i\vdash u$.
Here $\xi_i\vdash u$ means the interior product (contraction) of $\xi_i$
with $u$, or in other words, the evaluation of $\xi_i$ on the first component
of $u$. In what follows, we prefer to write it in the form $u\cdot \xi_i$
or simply  $u\xi_i$, since the interior product gives a right $A^!$-module structure
on $A^{\ac}$.
(In what follows we will also use the contraction from the right,
and in this case we write it in the form $\xi_i u$ for $\xi_i$ and $u$ as above.)

\begin{definition}[Koszul algebra]
A quadratic algebra $A=TW/(Q)$ is called {\it Koszul}
if the Koszul complex \eqref{Koszul_complex} is acyclic.
\end{definition}

Applying the graded version of Nakayama Lemma, we have 
the following (see \cite[Proposition 3.1]{VdB-1} for a proof):

\begin{proposition}\label{KoszulResolutionOfA}
Suppose $A$ is a Koszul algebra. 
Then
the following complex
\begin{equation}\label{Comp:KoszulResolutionOfA}
\xymatrix{
\cdots\ar[r]&A\otimes A^{\ac}_m\otimes A\ar[r]^-b &A\otimes A^{\ac}_{m-1}\otimes A
\ar[r]^-b&
A\otimes A^{\ac}_0\otimes A\cong A\otimes A \ar[r]^-{\mu} &A,
}
\end{equation}
where
\begin{equation}
b(a\otimes c\otimes a')=
\sum_i \Big(ae_i\otimes  c\xi_i\otimes a'+(-1)^{m}a\otimes \xi_i c  \otimes e_i a'\Big)
\end{equation}
for $a\otimes c\otimes a' \in A\otimes A^{\ac}_m\otimes A$, and $\mu $ is the multiplication
on $A$,
gives a resolution of $A$ as an $A^e$-module.
\end{proposition}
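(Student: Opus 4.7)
The statement has two parts: that $b$ is a differential (i.e.\ $b^2=0$) and that the resulting complex is exact in positive degrees. I would treat them separately.

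For $b^2=0$: expanding $b^2(a\otimes c\otimes a')$ with $c\in A^{\ac}_m$ produces four groups of terms. The two mixed groups, where one left and one right multiplication have each been applied, cancel by swapping the summation indices $i$ and $j$ and using $(-1)^{m-1}+(-1)^m=0$. For the pure-left group $\sum_{i,j}ae_ie_j\otimes c\xi_i\xi_j\otimes a'$, expand $c=\sum c_{k_1\ldots k_m}e_{k_1}\otimes\cdots\otimes e_{k_m}\in W^{\otimes m}$ and use the right $A^!$-action to unwind $c\xi_i\xi_j$; the coefficient of $a\otimes e_{k_3}\otimes\cdots\otimes e_{k_m}\otimes a'$ is then $\sum_{i,j}c_{i,j,k_3,\ldots,k_m}e_ie_j\in A$. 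The inclusion $A^{\ac}_m\subset Q\otimes W^{\otimes(m-2)}$ forces $\sum_{i,j}c_{i,j,k_3,\ldots,k_m}e_i\otimes e_j\in Q$, which maps to zero in $A$. The pure-right group vanishes symmetrically via $A^{\ac}_m\subset W^{\otimes(m-2)}\otimes Q$.

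For exactness: each term $A\otimes A^{\ac}_m\otimes A$ is a free $A^e$-module (outer bimodule structure), so it suffices to prove exactness as a complex of $k$-modules. Reducing modulo the right action of the augmentation ideal $A^+$, i.e.\ applying $-\otimes_A k$ on the right, kills the second summand of $b$ (because of the $e_ia'$ factor) and sends the first summand to $\sum_i ae_i\otimes c\xi_i$, which is exactly the differential $b'$ of the Koszul complex~\eqref{Koszul_complex}. By the Koszul hypothesis this reduced complex is acyclic with degree-$0$ homology equal to $k$. Endowing \eqref{Comp:KoszulResolutionOfA} with its internal grading (from the quadratic grading of $A$ and the induced grading on $A^{\ac}$), every term is a bounded-below graded free right $A$-module, and the graded Nakayama lemma in its chain-complex form lifts the acyclicity of the reduced complex to acyclicity of \eqref{Comp:KoszulResolutionOfA} in positive degrees, the degree-$0$ homology being $A$ as demanded by the augmentation $\mu$.

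The calculation $b^2=0$ is formal, modulo the sign bookkeeping sketched above. The main obstacle is the exactness step: one must justify lifting acyclicity through graded Nakayama for complexes, not merely for modules. This is where the Koszul hypothesis really enters, and where the internal grading boundedness and the freeness of each $A\otimes A^{\ac}_m\otimes A$ over $A$ must be invoked to run a clean induction on internal degree; this is precisely the point indicated by the reference to \cite[Proposition~3.1]{VdB-1}.
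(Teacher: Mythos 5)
Your argument is correct and follows the same route the paper indicates: the paper disposes of this proposition in one line by invoking the graded Nakayama lemma (citing \cite[Proposition 3.1]{VdB-1}), and your reduction of \eqref{Comp:KoszulResolutionOfA} modulo the right augmentation ideal to the Koszul complex \eqref{Koszul_complex}, followed by the graded Nakayama lifting, is exactly the content of that citation. The $b^2=0$ computation via the inclusions $A^{\ac}_m\subset Q\otimes W^{\otimes(m-2)}$ and $A^{\ac}_m\subset W^{\otimes(m-2)}\otimes Q$ is the standard verification and is sound (modulo the suspension signs you acknowledge suppressing).
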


Let $K(A):=(A\otimes A^{\ac}\otimes A, b)$. Then
\begin{equation}
\mathrm{Tor}_\bullet^{A^e}(A, A)=\mathrm{H}_\bullet(A\otimes_{A^e}K(A))
\quad\mbox{and}\quad
\mathrm{Ext}^\bullet_{A^e}(A, A)=\mathrm{H}^\bullet(\mathrm{Hom}_{A^e}(K(A), A)),
\end{equation}
which are also identical to the Hochschild 
homology $\mathrm{HH}_\bullet(A)$ and cohomology $\mathrm{HH}^\bullet(A)$ of $A$ respectively.
This result is due to Priddy:

\begin{proposition}[Priddy \cite{Priddy}]\label{lemma:identityofHochschild}
The complexes $A\otimes_{A^e}K(A)$
and
$\mathrm{Hom}_{A^e}(K(A), A)$
are quasi-isomorphic to the Hochschild chain complex
$\mathrm{CH}_\bullet(A)$
and the Hochschild cochain complexes $\mathrm{CH}^\bullet(A)$ respectively.
\end{proposition}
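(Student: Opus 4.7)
The plan is to realize both sides as derived functors computed from two different free $A^e$-resolutions of $A$, and then invoke the comparison theorem for projective resolutions. By Proposition \ref{KoszulResolutionOfA}, the Koszul complex $K(A)=(A\otimes A^{\ac}_\bullet \otimes A,b)$ with the multiplication $\mu$ is a free $A^e$-resolution of $A$. On the other hand, the reduced bar resolution $\bar{B}(A)=(A\otimes \bar{A}^{\otimes \bullet}\otimes A,b_{\bar{B}})$, where $\bar{A}=A/k$, is the standard free $A^e$-resolution of $A$; by definition,
\[
A\otimes_{A^e}\bar{B}(A)=\mathrm{CH}_\bullet(A),\qquad \mathrm{Hom}_{A^e}(\bar{B}(A),A)=\mathrm{CH}^\bullet(A).
\]

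First, I would construct an explicit $A^e$-linear chain map $\iota:K(A)\to\bar{B}(A)$ lifting the identity on $A$. In degree $n$, the natural choice for $\iota_n:A\otimes A^{\ac}_n\otimes A\to A\otimes \bar{A}^{\otimes n}\otimes A$ is induced by the inclusion $A^{\ac}_n\hookrightarrow \Sigma^{\otimes n}W^{\otimes n}$, the desuspension, and the projection $W^{\otimes n}\twoheadrightarrow A^{\otimes n}\twoheadrightarrow \bar{A}^{\otimes n}$. To verify that $\iota$ commutes with the differentials, the key observation is that $A^{\ac}_n=\bigcap_{i+j+2=n}W^{\otimes i}\otimes Q\otimes W^{\otimes j}$: every \emph{interior} bar differential applied to the image of $\iota$ contracts two adjacent factors whose product lies in $Q$, and hence vanishes in $\bar{A}$. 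Only the outermost bar differentials survive, and these match the Koszul differential $b$ under the identification of the left/right interior products with $\xi_i$ as the cap products coming from the Koszul duality pairing.

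Second, since both $K(A)$ and $\bar{B}(A)$ are free $A^e$-resolutions of $A$, the comparison theorem for projective resolutions produces an $A^e$-linear chain map $\iota':\bar{B}(A)\to K(A)$ also lifting $\mathrm{id}_A$, and both $\iota'\iota$ and $\iota\iota'$ are $A^e$-linearly chain homotopic to the identity. Thus $\iota$ is an $A^e$-linear chain homotopy equivalence. Applying the additive functors $A\otimes_{A^e}-$ and $\mathrm{Hom}_{A^e}(-,A)$ then yields chain homotopy equivalences
\[
A\otimes_{A^e}K(A)\xrightarrow{\;\sim\;}\mathrm{CH}_\bullet(A),\qquad \mathrm{CH}^\bullet(A)\xrightarrow{\;\sim\;}\mathrm{Hom}_{A^e}(K(A),A),
\]
which are in particular quasi-isomorphisms.

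The main obstacle is the sign bookkeeping in showing that $\iota$ is a chain map: the interaction of the suspensions $\Sigma^{\otimes n}$ in $A^{\ac}_n$, the Koszul signs produced by moving $\xi_i$ past elements of $A^{\ac}$, and the sign $(-1)^m$ in the formula for $b$ must all assemble correctly against the unsigned/signed structure of the bar differential. This verification is, however, a standard exercise once the characterization $A^{\ac}_n\subset \bigcap W^{\otimes i}\otimes Q\otimes W^{\otimes j}$ is used to kill all interior bar terms, and it is essentially the heart of Priddy's original argument in \cite{Priddy}.
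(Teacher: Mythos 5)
Your proof is correct. The paper offers no argument of its own for this statement---it simply cites Priddy and points to Van den Bergh's explicit formula for the quasi-isomorphism---and your argument (realizing $K(A)$ and the reduced bar resolution as two free $A^e$-resolutions of $A$, constructing the explicit inclusion $K(A)\hookrightarrow \bar B(A)$ via $U_n\subset W^{\otimes n}$ so that all interior bar terms die on $Q$, and then invoking the comparison theorem before applying $A\otimes_{A^e}-$ and $\mathrm{Hom}_{A^e}(-,A)$) is precisely the standard proof underlying that citation.
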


There are explicit formulas for the chain complexes
$A\otimes_{A^e}K(A)$
and
$\mathrm{Hom}_{A^e}(K(A), A)$: as graded vector
spaces, they are the same as $A\otimes A^{\ac}_\bullet$ and $A\otimes A^!_{\bullet}$, while
the differentials are given by
\begin{equation}\label{DifferentialInKoszulComplex}
b(a\otimes u)=\sum_{i}
\Big(a e_i \otimes u \xi_i  +(-1)^{|a|}  e_i a\otimes \xi_i u\Big),
\end{equation}
and 
\begin{equation}\label{DifferentialInComplexComputingHochschildCohomology}
b(a\otimes x)=\sum_i\Big(a e_i \otimes x \xi_i  +(-1)^{|a|}  e_i a\otimes \xi_i x\Big),
\end{equation}
respectively. 
Van den Bergh \cite[Proposition 3.3]{VdB-1} gave a formula for
the quasi-isomorphism of these complexes.

Now for a Koszul algebra $A$, view it as a DG algebra concentrated in degree zero with
trivial differential. 
Let $A^{\ac}$ be the Koszul dual coalgebra of $A$,
and $\Omega(A^{\ac})$ be the cobar construction of $A^{\ac}$, 
whose differential is denoted by $\partial$. Then
from \eqref{Comp:KoszulResolutionOfA}
one can deduce that the natural surjective map
\begin{equation}\label{Koszulresolution}
(\Omega(A^{\ac}),\partial)\to (A, 0),
\end{equation}
is a quasi-isomorphism of DG algebras ({\it c.f.} \cite[Theorem 3.4.4]{LV}),
which then gives a cofibrant resolution of $A$ in the category
of DG associative algebras.

\subsection{Noncommutative geometry for Koszul algebras}

Suppose $A$ is a Koszul algebra.
Let $A^{\ac}$ and $A^!$ be its Koszul dual coalgebra and algebra respectively.
Let $\tilde A^{\ac}$ be the co-unitalization of $A^{\ac}$; that is, $\tilde A^{\ac}=A^{\ac}\oplus k$
with the coproduct $\tilde \Delta$ being
\begin{eqnarray*}
\tilde\Delta(a)&=&\Delta(a)+a\otimes 1+1\otimes a,\; \mbox{for}\; a\in A^{\ac}, \\
\tilde\Delta(1)&=&1\otimes 1,
\end{eqnarray*}
where 
$\Delta(a)$ is the coproduct of $a$ in $A^{\ac}$.
Let 
$\tilde R=\Omega(\tilde A^{\ac})$
be the cobar construction of $A^{\ac}$.
The following is straightforward since $\tilde R$ is a quasi-free DG algebra:

\begin{proposition}
Let $\tilde R=\Omega(\tilde A^{\ac})$.
Then
\begin{equation}
\Omega^1_{\mathrm{nc}}\tilde R[-1]\cong \tilde R\otimes A^{\ac}\otimes \tilde R \quad
\mbox{and}\quad
\mathbb D\mathrm{er}\;\tilde R[1]\cong \tilde R\otimes A^{!}\otimes \tilde R.
\end{equation}
\end{proposition}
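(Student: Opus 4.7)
The plan is to reduce everything to the fact that $\tilde R = \Omega(\tilde A^{\ac})$, viewed as a graded $k$-algebra (forgetting $\partial$), is the free tensor algebra on the graded vector space $\Sigma^{-1}A^{\ac}$. Indeed, the co-augmentation coideal of $\tilde A^{\ac} = A^{\ac}\oplus k$ is precisely $A^{\ac}$, so the underlying graded algebra of the cobar construction is $T(\Sigma^{-1}A^{\ac})$, with $\partial$ a derivation determined by the co-unitalized coproduct. Once the graded bimodule identifications are in hand, compatibility with $\partial$ will follow automatically since $d$ and the double-derivation construction are functorial.

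The first isomorphism is then an instance of the standard fact that for a tensor algebra $T(V)$ on any graded vector space $V$ there is a canonical identification of $T(V)$-bimodules
\[
\Omega^1_{\mathrm{nc}}T(V)\;\cong\; T(V)\otimes V\otimes T(V), \qquad dv\longmapsto 1\otimes v\otimes 1,
\]
which follows from the universal property of K\"ahler differentials together with the fact that a derivation out of $T(V)$ is determined by its values on $V$. Specializing to $V = \Sigma^{-1}A^{\ac}$ and applying the overall shift $[-1]$, which absorbs the $\Sigma^{-1}$ on $A^{\ac}$, I would obtain $\Omega^1_{\mathrm{nc}}\tilde R[-1]\cong \tilde R\otimes A^{\ac}\otimes \tilde R$.

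For the double derivations I would combine $\mathbb D\mathrm{er}\;\tilde R \cong \mathrm{Hom}_{\tilde R^e}(\Omega^1_{\mathrm{nc}}\tilde R,\tilde R\otimes\tilde R)$ (recalled in the paper, with the outer structure on the target) with the freeness just established, giving $\mathrm{Hom}_k(\Sigma^{-1}A^{\ac},\tilde R\otimes\tilde R)$. Because $W$, and hence each homogeneous piece of $A^{\ac}$, is finite-dimensional over $k$, this splits canonically as $(\Sigma^{-1}A^{\ac})^{\ast}\otimes\tilde R\otimes\tilde R \cong \Sigma A^{!}\otimes \tilde R\otimes\tilde R$, and putting the inner bimodule structure on the $\tilde R\otimes\tilde R$ factor rearranges this into a free $\tilde R$-bimodule on $\Sigma A^{!}$. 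Applying the shift $[1]$ cancels the remaining $\Sigma$ and produces $\mathbb D\mathrm{er}\;\tilde R[1]\cong \tilde R\otimes A^{!}\otimes\tilde R$.

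The main thing left to check is DG-compatibility: both sides acquire an internal differential from $\partial$, and on the generators $d(\Sigma^{-1}a)$ (respectively their duals) the identifications should intertwine it with the differential on $\tilde R\otimes A^{\ac}\otimes \tilde R$ (respectively $\tilde R\otimes A^{!}\otimes \tilde R$) induced from the coproduct on $\tilde A^{\ac}$, using $d\partial + \partial d = 0$. The only real obstacle here is bookkeeping: keeping Koszul signs straight and tracking how the $[\pm 1]$ degree-shifts interact with the suspensions $\Sigma^{\pm 1}$ built into both the cobar construction and the definition of $A^{!}$. There is no conceptual difficulty, which is why the authors can simply remark that the proposition is \emph{straightforward}.
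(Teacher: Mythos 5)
Your proposal is correct and follows essentially the same route as the paper: both identify $\Omega^1_{\mathrm{nc}}\tilde R$ as the free $\tilde R$-bimodule on the generating space $A^{\ac}[1]$ of the tensor algebra $\tilde R=T(A^{\ac}[1])$ (the paper via the kernel-of-multiplication short exact sequence, you via the universal derivation — two equivalent descriptions already recalled in \S2), and then compute $\mathbb D\mathrm{er}\,\tilde R$ by the adjunction $\mathrm{Hom}_{\tilde R^e}(\tilde R\otimes A^{\ac}\otimes\tilde R,\tilde R\otimes\tilde R)\cong\mathrm{Hom}(A^{\ac},\tilde R\otimes\tilde R)\cong\tilde R\otimes A^!\otimes\tilde R$. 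Your explicit appeal to the degreewise finite-dimensionality of $A^{\ac}$ in the last identification is a point the paper leaves implicit, but it is not a different argument.
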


\begin{proof}
In fact, since $\tilde R=\Omega(\tilde A^{\ac})=T(A^{\ac}[1])$ is quasi-free, 
we have the short exact sequence
$$0\longrightarrow \tilde R\otimes A^{\ac}[1]\otimes \tilde R\longrightarrow
\tilde R\otimes \tilde R\longrightarrow \tilde R\longrightarrow 0.$$
It follows that
$\Omega^1_{\mathrm{nc}} \tilde R=\tilde R\otimes A^{\ac}[1]\otimes \tilde R$,
that is, $\Omega^{1}_{\mathrm{nc}}\tilde R[-1]=\tilde R\otimes A^{\ac}\otimes \tilde R$.


From this, we also see that
\begin{eqnarray*}
\mathbb D\mathrm{er}\;\tilde R[1]&=&\mathrm{Hom}_{{\tilde R}^e}(\Omega^1_{\mathrm{nc}}\tilde R[-1], \tilde R\otimes\tilde R)\\
&=&\mathrm{Hom}_{{\tilde R}^e}(\tilde R\otimes A^{\ac}\otimes \tilde R, \tilde R\otimes\tilde R)\\
&=&\mathrm{Hom}(A^{\ac},\tilde R\otimes \tilde R)\\
&=&\tilde R\otimes A^!\otimes \tilde R,
\end{eqnarray*}
where in the last equality, we have identified $A^!\otimes (\tilde R\otimes\tilde R)$ 
with $\tilde R\otimes A^!\otimes
\tilde R$ via $a\otimes (u\otimes v)\mapsto (-1)^{(|a|+|u|)|v|}v\otimes a\otimes u$.
Thus $\mathbb D\mathrm{er}\;\tilde R[1]\cong \tilde R\otimes A^{!}\otimes \tilde R$ follows.
\end{proof}

The following proposition was obtained in \cite{BKR} (see also \cite{CEEY}),
and therefore we will only sketch its proof.
Given an associative algebra $A$,
denote by $(\mathrm{CC}_\bullet(A),b)$
and $(\mathrm{CH}_\bullet(A), b)$
the Connes cyclic complex and the Hochschild chain complex
of $A$ respectively.

\begin{proposition}[\cite{BKR}]\label{Prop:cotangentcomplex}
Suppose $A$ is a Koszul algebra, and $\tilde R=\Omega(\tilde A^{\ac})$ with differential $\partial$.
Then
\begin{equation}\label{qiscyclicandhoch}
(\bar{\tilde R}_\natural,\partial)\simeq(\mathrm{CC}_\bullet(A)[1],b)
\quad\mbox{and}
\quad(\mathrm{DR}^1_{\mathrm{nc}} \tilde R,\partial)\simeq(\mathrm{CH}_\bullet(A),b)
\end{equation}
as chain complexes, where $\bar{\tilde R}$ is the augmentation ideal of $\tilde R$.
\end{proposition}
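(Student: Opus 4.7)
The plan is to combine the explicit formula $\Omega^1_{\mathrm{nc}}\tilde R[-1]\cong \tilde R\otimes A^{\ac}\otimes \tilde R$ from the previous proposition with Priddy's identification of $\mathrm{CH}_\bullet(A)$ with the Koszul complex $(A\otimes A^{\ac},b)$ of \eqref{DifferentialInKoszulComplex} (Proposition \ref{lemma:identityofHochschild}).

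For the Hochschild half of \eqref{qiscyclicandhoch}, I would first observe that for any $\tilde R$-bimodule of the form $\tilde R\otimes V\otimes\tilde R$ with trivial $\tilde R$-action on $V$, cyclic invariance gives $(\tilde R\otimes V\otimes\tilde R)_{\natural}\cong V\otimes\tilde R$ via $r_1\otimes v\otimes r_2\mapsto \pm\,v\otimes r_2 r_1$. Applying this with $V=A^{\ac}[1]$ yields $\mathrm{DR}^1_{\mathrm{nc}}\tilde R\cong A^{\ac}[1]\otimes \tilde R$ as graded vector spaces, and the differential descends to this quotient. Unpacking $\partial(dv)=-d(\partial v)$, using $\tilde\Delta(v)=\Delta(v)+v\otimes 1+1\otimes v$ together with the Leibniz rule, and reorganizing modulo cyclicity, the differential on $A^{\ac}[1]\otimes\tilde R$ acquires two outer terms involving multiplication by $e_i$ and contraction by $\xi_i$, coinciding with the Koszul differential \eqref{DifferentialInKoszulComplex}, together with an internal cobar term on the $\tilde R$-factor. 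Projecting $\tilde R\twoheadrightarrow A$ produces a chain map from $\mathrm{DR}^1_{\mathrm{nc}}\tilde R$ to $(A\otimes A^{\ac},b)$, and a weight filtration by total tensor length in $A^{\ac}$ reduces the verification that this map is a quasi-isomorphism to the acyclicity of \eqref{Koszul_complex}, which holds by hypothesis. Combined with Proposition \ref{lemma:identityofHochschild}, this gives $\mathrm{DR}^1_{\mathrm{nc}}\tilde R\simeq\mathrm{CH}_\bullet(A)$.

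For the cyclic half, the same computation applies after the further cyclic quotient: $\bar{\tilde R}_{\natural}=\bigoplus_{n\geq 1}(A^{\ac}[1])^{\otimes n}/\mathrm{cyc}$, and the cobar differential descends to give the Koszul cyclic complex computing $\mathrm{CC}_\bullet(A)$. The shift by one in $\mathrm{CC}_\bullet(A)[1]$ reflects that a cyclic class of $n$ factors $[a_1|\cdots|a_n]$ contributes total degree $n+\sum|a_i|$ in $\bar{\tilde R}_{\natural}$ (one unit of degree per suspension $A^{\ac}[1]$), while the corresponding element in Connes' complex $\mathrm{CC}_{n-1}(A)$ sits in homological degree $n-1$, accounting for the extra $+1$.

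The main technical obstacle is the careful bookkeeping of the Koszul signs that arise when cyclic rotation interacts with the suspension of $A^{\ac}$, and the verification that the two ``outer'' contributions of $\tilde\Delta$ really produce the Priddy differential \eqref{DifferentialInKoszulComplex} with the correct signs. A clean way to finish the argument is to filter both sides by tensor weight in $A^{\ac}$ and apply Koszulness on the associated graded, reducing the claim to Priddy's Proposition \ref{lemma:identityofHochschild}; alternatively, since essentially the same result is proved in \cite{BKR}, one may cite that reference directly after checking that the sign and grading conventions agree.
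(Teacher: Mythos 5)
Your treatment of the Hochschild half is a genuinely different (and legitimate) route from the paper's. The paper simply observes that $(\Omega^1_{\mathrm{nc}}\tilde R[-1])_\natural=A^{\ac}\otimes\tilde R$ \emph{is} the Hochschild chain complex of the coalgebra $A^{\ac}$, checks that $\partial$ descends to the Hochschild boundary, and then cites the Koszul-duality quasi-isomorphism $\mathrm{CH}_\bullet(A)\simeq\mathrm{CH}_\bullet(A^{\ac})$ from \cite[Theorem 15]{CYZ}; you instead build an explicit chain map onto the Koszul complex $(A\otimes A^{\ac},b)$ via the projection $\tilde R\twoheadrightarrow A$ and argue by a weight filtration. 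That map is indeed a chain map (the counit terms of $\tilde\Delta$ and all splittings of weight $\geq 2$ die under the projection, leaving exactly the $e_i$/$\xi_i$ terms of \eqref{DifferentialInKoszulComplex}), and the filtration argument is essentially the content of the cited result, so this half is fine modulo the sign bookkeeping you acknowledge; it buys self-containedness at the cost of redoing \cite{CYZ}.

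The cyclic half, however, has a real gap: ``the same computation applies after the further cyclic quotient'' is an assertion, not an argument. The identification of $(\bar{\tilde R}_\natural,\partial)$ with a complex computing $\mathrm{CC}_\bullet(A)[1]$ is exactly where the paper invokes three nontrivial inputs: Quillen's theorem that $\bar\Omega(\tilde C)_\natural[-1]$ is a model for the cyclic complex of a coalgebra $C$, the Jones--McCleary quasi-isomorphism $\mathrm{CC}_\bullet(\Omega(A^{\ac}))\simeq\mathrm{CC}_\bullet(A^{\ac})$, and the invariance of $\mathrm{CC}_\bullet$ under the quasi-isomorphism $\Omega(A^{\ac})\to A$. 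None of these is a formal consequence of your Hochschild comparison: your chain map $A^{\ac}\otimes\tilde R\to A\otimes A^{\ac}$ does not descend to anything useful on $\bigoplus_{n\geq 1}(A^{\ac}[1])^{\otimes n}/\mathrm{cyc}$ (the projection $\tilde R\to A$ kills almost everything there for degree reasons), and deducing a cyclic statement from a Hochschild one requires at minimum that the comparison be organized as a morphism of mixed complexes or cyclic modules so that the Connes exact sequence (or the cyclic bicomplex) can be invoked. Either supply that structure, or do what the paper does and cite Quillen and Jones--McCleary explicitly; your closing fallback of citing \cite{BKR} is acceptable but should then carry the whole cyclic half, not just the sign conventions. (Minor point: with the paper's conventions $\mathrm{DR}^1_{\mathrm{nc}}\tilde R=(\Omega^1_{\mathrm{nc}}\tilde R[-1])_\natural=A^{\ac}\otimes\tilde R$, without the extra $[1]$ you wrote.)
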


To prove this proposition, we have to recall the definition
of the cyclic homology of coalgebras.
Suppose that $C$ is a DG coalgebra, let $\Omega(\tilde C)$ be the
cobar construction of the co-unitalization of $C$.
Let $\bar{\Omega}(\tilde C)$ be the augmentation ideal of $\Omega(\tilde C)$.
Then by Quillen \cite[\S1.3]{Quillen}
the cyclic complex of $C$, denoted by $(\mathrm{CC}_\bullet(C), b)$,
may take to be complex $(\bar{\Omega}(\tilde C)_{\natural}[-1],\partial)$.

\begin{proof}[Sketch of proof of Proposition \ref{Prop:cotangentcomplex}]
On the one hand, by \eqref{Koszulresolution}
we have 
\begin{equation}\label{quasiisoofcyclic1}
\mathrm{CC}_\bullet(\Omega(A^{\ac}))\simeq \mathrm{CC}_\bullet(A)
\end{equation}
since quasi-isomorphic DG algebras have quasi-isomorphic cyclic chain complexes.
On the other hand, by Jones-McCleary \cite[Theorem 1]{JM} 
(see also \cite[Lemma 17]{CYZ} for a proof from the Koszul duality point of view)
we have
\begin{equation}\label{quasiisoofcyclic2}
\mathrm{CC}_\bullet(\Omega(A^{\ac}))\simeq\mathrm{CC}_\bullet(A^{\ac}).
\end{equation}
Combining \eqref{quasiisoofcyclic1} and \eqref{quasiisoofcyclic2}
we obtain $\bar{\tilde R}_\natural\simeq \mathrm{CC}_\bullet(A)[1]$.

Next, we show
$\mathrm{DR}^1_{\mathrm{nc}}\tilde R\simeq\mathrm{CH}_\bullet(A)$.
Since 
$\Omega^1_{\mathrm{nc}}\tilde R[-1]=\tilde R\otimes A^{\ac}\otimes\tilde R$,
we have
$$
\mathrm{DR}^1_{\mathrm{nc}} \tilde R=
(\Omega^1_{\mathrm{nc}}\tilde R[-1])_{\natural}=A^{\ac}\otimes \tilde R=\mathrm{CH}_\bullet(A^{\ac}),
$$
where $\mathrm{CH}_\bullet(A^{\ac})$ is the underlying space of the Hochschild chain complex of $A^{\ac}$.
By a direct computation we also see that $\partial$ on $(\Omega^1_{\mathrm{nc}}\tilde R)_{\natural}$
coincides with the Hochschild boundary map on $\mathrm{CH}_\bullet(A^{\ac})$.

Again by Koszul duality, the same argument as above yields
$
\mathrm{CH}_\bullet(A)\simeq\mathrm{CH}_\bullet(A^{\ac})
$ (see \cite[Theorem 15]{CYZ} for a complete proof), 
which implies $\mathrm{DR}^1_{\mathrm{nc}} \tilde R\simeq\mathrm{CH}_\bullet(A)$.
\end{proof}

\begin{convention}\label{conv:cyclichomology}
In the rest of the paper, as adopted by Berest et. al. in \cite{BKR},
when writing $\mathrm{CC}_\bullet(-)$, we always mean
$\mathrm{CC}_\bullet(-)[1]$.
\end{convention}

\begin{proposition}\label{Prop:tangentcomplex}
Suppose $A$ is a Koszul algebra and $\tilde R=\Omega(\tilde A^{\ac})$.
Then
$((\mathbb D\mathrm{er}\;\tilde  R[1])_{\natural},\partial)$ is quasi-isomorphic to 
$(\mathrm{Der}\; \tilde R[1],\partial)$,
which is further quasi-isomorphic to the Hochschild cochain complex
$\mathrm{CH}^\bullet(A)$.
\end{proposition}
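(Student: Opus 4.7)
The statement asserts two quasi-isomorphisms, and the plan is to establish each separately: the first by a direct computation exploiting the fact that $\tilde R$ is quasi-free, and the second by combining cofibrancy with Koszul duality on the Hochschild cochain side. For $(\mathbb D\mathrm{er}\;\tilde R[1])_\natural \simeq \mathrm{Der}\;\tilde R[1]$, I would introduce the map
$$\mu_*\colon \mathbb D\mathrm{er}\;\tilde R = \mathrm{Der}(\tilde R,\tilde R\otimes \tilde R) \longrightarrow \mathrm{Der}(\tilde R,\tilde R),\quad \Theta\mapsto \mu\circ \Theta,$$
obtained by post-composing with the multiplication $\mu\colon \tilde R\otimes \tilde R\to \tilde R$. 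A direct check using the inner bimodule formula $u*(x\otimes y)*v=\pm xv\otimes uy$ shows that $\mu_*(u*\Theta)=\mu_*(\Theta*u)$ for every $u\in \tilde R$, so $\mu_*$ descends to the commutator quotient. Under the identification $\mathbb D\mathrm{er}\;\tilde R[1]\cong \tilde R\otimes A^!\otimes \tilde R$ from the preceding proposition, together with the isomorphism $\mathrm{Der}(\tilde R,\tilde R)[1]\cong A^!\otimes \tilde R$ (valid because $\tilde R=T(A^{\ac}[1])$ is quasi-free, so every derivation is determined by its values on generators), the induced map $(\tilde R\otimes A^!\otimes \tilde R)_\natural \to A^!\otimes \tilde R$ becomes the standard cyclic-rotation isomorphism of underlying graded spaces; compatibility with $\partial$ is immediate since $\mu$ is a morphism of DG algebras.

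For the second quasi-isomorphism, the plan is to use that $\tilde R\to A$ is a cofibrant resolution of DG algebras, so that $\mathrm{Der}(\tilde R,\tilde R)=\mathrm{Hom}_{\tilde R^e}(\Omega^1_{\mathrm{nc}}\tilde R,\tilde R)$ is a model for the derived Hom $\mathrm{RHom}_{A^e}(\Omega^1_{\mathrm{nc}}A,A)$; via the short exact sequence $0\to \Omega^1_{\mathrm{nc}}A\to A\otimes A\to A\to 0$ the latter computes $\mathrm{HH}^{\bullet+1}(A)$ in positive degrees, which after the shift by $[1]$ matches $\mathrm{CH}^\bullet(A)$. More concretely, combining $\mathrm{Der}(\tilde R,\tilde R)[1]\cong A^!\otimes \tilde R$ with $\tilde R\simeq A$ yields a quasi-isomorphism $A^!\otimes \tilde R \simeq A^!\otimes A$; by Proposition \ref{lemma:identityofHochschild} together with the formula \eqref{DifferentialInComplexComputingHochschildCohomology}, this latter complex is precisely the Koszul model of $\mathrm{CH}^\bullet(A)$, and the cobar differential on $\tilde R$ translates into the Hochschild coboundary $b$ under this identification.

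The main obstacle will be the sign and degree-shift bookkeeping: propagating Koszul signs through the inner-bimodule identification $\mathbb D\mathrm{er}\;\tilde R[1]\cong \tilde R\otimes A^!\otimes \tilde R$, reconciling the shifts on $A^{\ac}$, $A^!$, and $\tilde R$, and aligning everything with the Hochschild cohomological degree conventions. I expect this part to run largely parallel to the proof of Proposition \ref{Prop:cotangentcomplex}, with the cyclic and Hochschild chain complexes replaced by their cochain counterparts, and the underlying Koszul-duality matching of complexes can be carried out along the lines of \cite{CYZ}.
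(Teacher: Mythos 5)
Your first step --- descending $\mu_*\colon\Theta\mapsto\mu\circ\Theta$ to the commutator quotient and matching $(\tilde R\otimes A^!\otimes\tilde R)_\natural$ with $A^!\otimes\tilde R\cong\mathrm{Der}\,\tilde R[1]$ --- is fine and is essentially the paper's (more terse) argument made explicit. The problem is in your second step. You assert that $\tilde R\to A$ is a cofibrant resolution and that $\tilde R\simeq A$, but this is false: the cofibrant resolution of $A$ is $R=\Omega(A^{\ac})$, whereas $\tilde R=\Omega(\tilde A^{\ac})$ is the cobar construction of the \emph{co-unitalization}, which contains the extra generator coming from the counit of $A^{\ac}$ and is in fact acyclic, i.e.\ quasi-isomorphic to $k$ (the paper says exactly this in \S\ref{sect:DRep}: ``$\mathrm{Rep}_V(\tilde R)$ is hardly a derived functor since $\tilde R$ is acyclic''). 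Consequently the chain ``$\mathrm{Der}(\tilde R,\tilde R)$ models $\mathrm{RHom}_{A^e}(\Omega^1_{\mathrm{nc}}A,A)$'' and ``$A^!\otimes\tilde R\simeq A^!\otimes A$'' collapses. Even setting that aside, the second claimed quasi-isomorphism would not follow from a quasi-isomorphism of the right-hand tensor factors alone: the differential on $A^!\otimes\tilde R$ coming from $[\partial,-]$ on $\mathrm{Der}\,\tilde R$ is not of the form $\mathrm{id}\otimes\partial$ (it mixes the $A^!$ factor via the coproduct of $A^{\ac}$), so a K\"unneth-type argument is not available.

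The paper's actual route for the second quasi-isomorphism is different and avoids this entirely: it identifies $\mathrm{Der}\,\tilde R[1]\cong\tilde R\otimes A^!=\bigoplus_n\mathrm{Hom}((A^![-1])^{\otimes n},A^!)$ with the (full, non-normalized) Hochschild cochain complex $\mathrm{CH}^\bullet(A^!)$ of the Koszul \emph{dual algebra}, checks that the induced differential is the Hochschild coboundary, and then invokes Keller's theorem (\cite[Theorem 3.5]{Keller}, see also Shoikhet) that $\mathrm{CH}^\bullet(A)\simeq\mathrm{CH}^\bullet(A^!)$ for a Koszul algebra $A$. If you want to repair your argument without quoting Keller, you would have to work with $R=\Omega(A^{\ac})$ rather than $\tilde R$ and then separately control the effect of the extra counit generator; as written, the step relying on $\tilde R\simeq A$ is a genuine gap.
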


\begin{proof}
Observe that as graded vector spaces
\begin{eqnarray*}
\mathbb D\mathrm{er}\; \tilde R[1]
&=&\mathrm{Hom}_{\tilde R^e}(\Omega^1_{\mathrm{nc}}\tilde R[-1],\tilde R\otimes\tilde R)\\
&=&\mathrm{Hom}_{\tilde R^e}(\tilde R\otimes A^{\ac}\otimes\tilde R,\tilde R\otimes\tilde R)\\
&=&\mathrm{Hom}(A^{\ac},\tilde R\otimes\tilde R)\\
&=&\tilde R\otimes A^{!}\otimes \tilde R,
\end{eqnarray*}
thus 
$$
(\mathbb D\mathrm{er}\;\tilde R[1])_{\natural}=\mathrm{Der}\;\tilde R[1]
\cong \tilde R\otimes A^{!}
=\bigoplus_n\mathrm{Hom}((A^![-1])^{\otimes n}, A^!)
=\mathrm{CH}^\bullet(A^!).
$$
Under this identity, a direct calculation
identifies the differential on $\mathrm{Der}\;\tilde R$ 
with
the Hochschild coboundary on $\mathrm{CH}^\bullet(A^!)$.

Finally, by Keller \cite[Theorem 3.5]{Keller} (see also Shoikhet \cite[Theorem 4.2]{Shoikhet}), for a Koszul algebra $A$,
$\mathrm{CH}^\bullet(A)$ is quasi-isomorphic to $\mathrm{CH}^\bullet(A^!)$
as DG Lie algebras, and hence in particular, as chain complexes.
Thus combining it with the above quasi-isomorphisms, 
we have
$(\mathbb D\mathrm{er}\; \tilde R[1])_{\natural}\simeq \mathrm{CH}^\bullet(A)$.
\end{proof}

\section{Koszul Calabi-Yau algebras}\label{sect:KoszulCY}

The notion of Calabi-Yau algebras was introduced by Ginzburg \cite{Ginzburg} in 2007.
Let $A$ be a Koszul Calabi-Yau algebra and $\tilde R=\Omega(\tilde A^{\ac})$ be as before.
In this section, we show that the volume form of the noncommutative Poincar\'e duality
of $A$
also gives the shifted bi-symplectic structure on $\tilde R$.

\begin{definition}[Ginzburg]
Suppose $A$ is an associative algebra over $k$. Then $A$ is called {\it Calabi-Yau of
dimension $n$} (or {\it $n$-Calabi-Yau}) if
\begin{enumerate}
\item $A$ is homologically smooth, that is, $A$, viewed as a (left) $A^e$-module,
has a bounded, finitely generated projective resolution, and
\item there is an isomorphism
\begin{equation}\label{cond:CY(ii)}
\eta: \mathrm{RHom}_{A^e}(A, A\otimes A)\cong A[n]
\end{equation}
in the derived category $\mathbf D(A^e)$ of (left) $A^e$-modules.
\end{enumerate}
\end{definition}

In the above definition, 
the $A^e$-module structure on $A\otimes A$ and $\mathrm{RHom}_{A^e}(A, A\otimes A)$
is completely analogous to the case of $\mathbb D\mathrm{er}\; A$.

Suppose $A$ is a homologically smooth algebra, 
then Van den Bergh \cite{VdB0} showed that
\begin{eqnarray}\label{VdBsIso}
\mathrm{HH}_i(A) &=&\mathrm{H}_i(A\otimes^{\mathrm L}_{A^e} A)\nonumber\\
&\cong &
\mathrm{Hom}_{\mathbf D(A^e)}(\mathrm{RHom}_{A^e}(A, A\otimes A),A[i]),
\quad\mbox{for all}\; i
\end{eqnarray}
and therefore for $A$ being Calabi-Yau, the isomorphism \eqref{cond:CY(ii)}
corresponds to an element in $\mathrm{HH}_n(A)$, which is still denoted by $\eta$
and is called the {\it volume class} of $A$.

In general, for an arbitrary Calabi-Yau algebra $A$,
it is difficult to find its volume class.
However, in the case when $A$ is also Koszul, this turns out to be very easy.
Besides that, Koszul Calabi-Yau algebras have some other good features; for example,
they form so far the most known and interesting examples of Calabi-Yau algebras,
and they are all given by a {\it superpotential} (see \cite{VdB3}).

\subsection{The volume form}

Now suppose $A$ is Koszul Calabi-Yau of dimension $n$. 
First, by Proposition \ref{KoszulResolutionOfA}, we have
\begin{eqnarray*}
&&\mathrm{Hom}_{\mathbf D(A^e)}(\mathrm{RHom}_{A^e}(A, A\otimes A), A[n])\\
&=&
\mathrm{Hom}_{\mathbf D(A^e)}(\mathrm{Hom}_{A^e}(A\otimes A^{\ac}\otimes A, A\otimes A), 
A\otimes A^{\ac}[n]\otimes A)\\
&=&\mathrm{Hom}_{\mathbf D(A^e)}(\mathrm{Hom}_k(A^{\ac}, A\otimes A), 
A\otimes A^{\ac}[n]\otimes A)\\
&=&\mathrm{Hom}_{\mathbf D(A^e)}(A\otimes A^{!}\otimes A, A\otimes A^{\ac}[n]\otimes A).
\end{eqnarray*}
Second, 
by \eqref{cond:CY(ii)}, we get 
$$A\otimes A^{!}\otimes A\cong A\otimes A^{\ac}[n]\otimes A$$
in $\mathbf D(A^e)$. Since both sides of this identification
are minimal free resolutions of $A$, we get
an isomorphism of $A^{!}\cong A^{\ac}[n]$ as $A^!$-bimodules
(see also \cite[Proposition 28]{CYZ}).
This implies that $A^!$ is a {\it cyclic} associative
algebra of degree $n$.
Let us recall its definition first.

\begin{definition}[Cyclic associative algebra]
Suppose $A$ is a graded associative algebra.
It is called {\it cyclic} of degree $n$
if it admits a degree $n$, non-degenerate bilinear pairing
$\langle-,-\rangle: A\times A\to k[n]$
such that $$\langle a\cdot b,c\rangle=(-1)^{(|a|+|b|)|c|}\langle c\cdot  a,b\rangle,\quad\mbox{for all}\;
a,b,c\in A.$$
\end{definition}

\begin{proposition}[Van den Bergh]\label{dualitybetweenCYcyclic}
Suppose that $A$ is a Koszul algebra.
Then $A$ is $n$-Calabi-Yau
if and only if $A^{!}$ is cyclic of degree $n$.
\end{proposition}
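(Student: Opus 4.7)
The plan is to transport the derived Calabi-Yau condition through the Koszul resolution of $A$ and read off what it imposes on linear data between $A^!$ and $A^{\ac}$, then identify that data with a cyclic non-degenerate pairing. Using Proposition \ref{KoszulResolutionOfA}, one models $\mathrm{RHom}_{A^e}(A, A\otimes A)$ by the complex $\mathrm{Hom}_k(A^{\ac}, A\otimes A) \cong A\otimes A^!\otimes A$ with the Hochschild coboundary \eqref{DifferentialInComplexComputingHochschildCohomology} and with the inner $A^e$-bimodule structure on $A\otimes A$. Combined with the chain of identifications already displayed in the excerpt, this shows that a candidate volume class $\eta \in \mathrm{HH}_n(A)$ is equivalently encoded by an element $\phi \in \mathrm{Hom}_k(A^!, A^{\ac}[n])$.

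Since, by construction, $A^{\ac}$ is in each graded component the $k$-linear dual of $A^!$, the space $\mathrm{Hom}_k(A^!, A^{\ac}[n])$ is canonically in bijection with the space of degree-$n$ bilinear pairings
$$\langle -, -\rangle \colon A^!\otimes A^! \to k[n].$$
Under this bijection, $\phi$ is a graded linear isomorphism precisely when the pairing is non-degenerate; so the rank part of the condition that $\eta$ be an isomorphism in $\mathbf{D}(A^e)$ matches non-degeneracy of $\langle-,-\rangle$.

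The essential remaining content is to translate the fact that $\eta$ is a \emph{bimodule} quasi-isomorphism, not just a chain-level one, into cyclicity of the pairing. The $A^e$-action on $\mathrm{Hom}_k(A^{\ac}, A\otimes A)$ uses the inner bimodule structure on $A\otimes A$ which, pushed through the identification $A\otimes A^!\otimes A \cong \mathrm{Hom}_k(A^{\ac}, A\otimes A)$, transports to left and right $A^!$-multiplication on the middle factor. Demanding that $\eta$ intertwine this with the obvious bimodule structure on $A[n]$ forces, at the level of the pairing, the identity
$$\langle a\cdot b, c\rangle = (-1)^{(|a|+|b|)|c|}\langle c\cdot a, b\rangle,$$
which is exactly the cyclic condition of the preceding definition. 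The hardest part will be the careful bookkeeping of Koszul signs and of the passage from the outer $A^e$-structure used to define $\mathbb{D}\mathrm{er}$ to the inner $A^e$-structure used on $A\otimes A$; conceptually, cyclicity on $A^!$ is just the cyclic symmetry of $\mathrm{HH}_n(A)$ transported through Van den Bergh duality.

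For the converse direction I would reverse the construction: given a cyclic non-degenerate pairing of degree $n$ on $A^!$, the induced linear isomorphism $A^!\cong A^{\ac}[n]$ defines a chain map on the Koszul model of $\mathrm{RHom}_{A^e}(A, A\otimes A)$, the cyclicity condition is precisely what is needed to promote this chain map to an $A^e$-module map, and a degreewise comparison (each graded component is finite-dimensional by Koszulity) upgrades it to a quasi-isomorphism, producing the Calabi-Yau structure.
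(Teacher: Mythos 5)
The paper does not actually prove this proposition: its ``proof'' is a citation to Van den Bergh \cite{VdB3} and to \cite[Proposition 28]{CYZ}. So you are supplying an argument where the authors supply a pointer. Your overall architecture --- model $\mathrm{RHom}_{A^e}(A,A\otimes A)$ on the Koszul bimodule resolution $A\otimes A^{\ac}\otimes A$, reduce a morphism to $A[n]$ in $\mathbf D(A^e)$ to a linear map $\phi\colon A^!\to A^{\ac}[n]$, and match non-degeneracy of the induced pairing with the quasi-isomorphism condition --- is the standard route taken in those references and is exactly the computation the paper sets up at the beginning of \S\ref{sect:KoszulCY}, so the strategy is sound.

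There is, however, a genuine error in the step where you extract cyclicity. You claim that the inner $A$-bimodule structure on $A\otimes A$, transported through $\mathrm{Hom}_{A^e}(A\otimes A^{\ac}\otimes A,\,A\otimes A)\cong A\otimes A^!\otimes A$, becomes ``left and right $A^!$-multiplication on the middle factor,'' and that cyclicity is the condition for $\eta$ to intertwine this with the bimodule structure on $A[n]$. Neither half of this can work: the residual $A^e$-action lands on the two \emph{outer} $A$-factors of $A\otimes A^!\otimes A$ (exactly as in the identification $\mathbb D\mathrm{er}\,\tilde R[1]\cong\tilde R\otimes A^!\otimes\tilde R$ in \S\ref{sect:Koszulduality}); an $A$-action cannot become an $A^!$-action, and in any case the $A^e$-linear extension $1\otimes\phi\otimes 1$ of an \emph{arbitrary} $k$-linear $\phi$ is automatically a map of free $A$-bimodules, so $A^e$-linearity imposes no condition on $\phi$ at all. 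The condition actually comes from commuting with the Koszul differentials: comparing $b(a\otimes x\otimes a')=\sum_i\bigl(ae_i\otimes x\xi_i\otimes a'\pm a\otimes \xi_i x\otimes e_i a'\bigr)$ on the source with the differential of Proposition \ref{KoszulResolutionOfA} on the target forces $\phi(x\xi_i)=\phi(x)\xi_i$ and $\phi(\xi_i x)=\xi_i\phi(x)$, i.e.\ $\phi$ must be a map of \emph{$A^!$-bimodules} $A^!\to (A^!)^\ast[n]$, the contraction actions on $A^{\ac}$ being the duals of multiplication. Since $A^!$ is free of rank one over $(A^!)^e$, such a map is determined by the central element $\lambda=\phi(1)\in (A^!)^\ast$, and centrality is precisely the graded trace condition $\lambda(xy)=(-1)^{|x||y|}\lambda(yx)$; together with non-degeneracy this is the cyclic structure. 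So your conclusion is right but the mechanism is the differential, not the bimodule action, and the same correction applies to your converse (cyclicity is what makes the $A^e$-linear extension a \emph{chain} map). Two smaller points you should also address: in the forward direction one needs a minimality/weight-grading argument to see that the chain map representing $\eta$ may be taken of the form $1\otimes\phi\otimes 1$ with $\phi$ landing in $k\otimes A^{\ac}[n]\otimes k$; and in the converse, non-degeneracy of a degree-$n$ pairing forces $A^!$ to be finite-dimensional, which is what gives the homological smoothness half of the Calabi--Yau condition.
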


\begin{proof}
See Van den Bergh \cite{VdB3}.
The interested reader may also refer to \cite[Proposition 28]{CYZ} for a simple proof.
\end{proof}

In the literature, a cyclic associative algebra is sometimes
also called a {\it symmetric Frobenius} algebra,
or simply a {\it symmetric} algebra.
If $A$ is a cyclic associative algebra of degree $n$,
then its linear dual $A^*=\mathrm{Hom}(A, k)$, which is a coassociative coalgebra,
also has a pairing
\begin{equation}
\langle-,-\rangle: A^*\times A^* \to  k[-n],\;
 (\alpha, \beta) \mapsto \langle\alpha^*, \beta^*\rangle,
\end{equation}
where $\alpha^*$ and $\beta^*$ are the images of $\alpha$ and $\beta$
under the map $A^{*}\stackrel{\cong}{\to} A$ induced by the pairing.
The cyclicity condition of the pairing becomes
\begin{equation}\label{cyclicconditionforcoalg}
\sum_{(\beta)}\langle \alpha, \beta^1\rangle\cdot  \beta^2
=\sum_{(\alpha)}(-1)^{|\alpha||\beta^1|}\langle \beta, \alpha^2\rangle \cdot \alpha^1,
\end{equation}
for any $\alpha,\beta\in A^*$ with
$\Delta(\alpha)=\sum_{(\alpha)}\alpha^1\otimes \alpha^2$ and 
$\Delta(\beta)=\sum_{(\beta)}\beta^1\otimes \beta^2$.

The above proposition in fact implies that the volume class of a Koszul Calabi-Yau algebra
is represented by a(ny) nonzero top-degree element in $A^{\ac}$.
More precisely, we have the following.

\begin{proposition}
Suppose $A$ is a Koszul $n$-Calabi-Yau algebra.
Then any nonzero element
$\eta\in A^{\ac}_{n}$, viewed
as an element in
$A\otimes A^{\ac}$ via the embedding
$A^{\ac}\cong k\otimes A^{\ac}\subset A\otimes A^{\ac}$
and hence a chain
in $(A\otimes A^{\ac}_{\bullet}, b)$,
is a cycle and represents
the volume class of $A$.
\end{proposition}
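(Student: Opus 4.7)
The plan is to prove the proposition in two steps: first, that $1 \otimes \eta$ is a $b$-cycle in the Koszul complex $(A \otimes A^{\ac}_\bullet, b)$ of \eqref{DifferentialInKoszulComplex}; second, that the resulting Hochschild class corresponds to the volume class under Van den Bergh's isomorphism \eqref{VdBsIso}.

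For the cycle property, I would apply \eqref{DifferentialInKoszulComplex} with $a = 1$ and $u = \eta$, which gives
\[
b(1 \otimes \eta) \;=\; \sum_i e_i \otimes \bigl(\eta\, \xi_i \,+\, (-1)^{\ast}\, \xi_i\, \eta\bigr),
\]
with a Koszul sign $\ast$ determined by the gradings. The key is to invoke the cyclic Frobenius structure on $A^!$ from Proposition \ref{dualitybetweenCYcyclic}: the pairing $\langle-,-\rangle$ on $A^!$ dualizes to the pairing on $A^{\ac}$ satisfying \eqref{cyclicconditionforcoalg}, and this is equivalent to the existence of a bimodule isomorphism $\phi : A^! \xrightarrow{\cong} A^{\ac}[n]$ sending $1 \mapsto \eta$. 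Computing $\phi(\xi_i)$ via both the left and the right $A^!$-action on $A^{\ac}$ yields a relation $\xi_i \cdot \eta = \pm\,\eta \cdot \xi_i$, where the sign is exactly the one that makes each summand of $b(1 \otimes \eta)$ cancel. Equivalently, this cancellation can be read off directly from \eqref{cyclicconditionforcoalg} applied to $\Delta(\eta)$.

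For the identification with the volume class, I would combine \eqref{VdBsIso} with the chain of identifications established just before Proposition \ref{dualitybetweenCYcyclic} to obtain
\[
\mathrm{HH}_n(A) \;\cong\; \mathrm{Hom}_{\mathbf D(A^e)}\!\bigl(\mathrm{RHom}_{A^e}(A, A \otimes A),\, A[n]\bigr) \;\cong\; \mathrm{Hom}_k\bigl(A^!,\, A^{\ac}[n]\bigr).
\]
Using the Koszul resolution \eqref{Comp:KoszulResolutionOfA} as a model for $\mathrm{RHom}_{A^e}(A, A \otimes A) \simeq A \otimes A^! \otimes A$, I would trace $[1 \otimes \eta]$ through this chain at the cocycle level and verify that it corresponds to the map $A^! \to A^{\ac}[n]$, $\xi \mapsto \xi \cdot \eta$. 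By Proposition \ref{dualitybetweenCYcyclic} this is a bimodule isomorphism, so $[1 \otimes \eta]$ induces a quasi-isomorphism $\mathrm{RHom}_{A^e}(A, A \otimes A) \simeq A[n]$, which is the defining property of the volume class.

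The main technical obstacle is sign bookkeeping. Both the Koszul differential \eqref{DifferentialInKoszulComplex} and the bimodule isomorphism coming from cyclicity involve signs depending on the gradings of $a$, $u$, $\xi_i$, $\eta$, and on the parity of $n$, as well as on the convention for translating the graded duality $A^! \leftrightarrow A^{\ac}$ between left and right module structures. Once these are pinned down consistently, the argument becomes essentially rigid: $A^{\ac}_n$ is one-dimensional, so any nonzero top cycle generates the top Hochschild class, and cyclicity guarantees that this class is the unique one (up to scalar) inducing an $A^e$-module quasi-isomorphism with $A[n]$.
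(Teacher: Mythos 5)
Your proposal is correct, and the first half (the cycle property via the cyclicity relation $\xi_i\cdot\eta=\pm\,\eta\cdot\xi_i$, equivalently \eqref{cyclicconditionforcoalg} applied to $\Delta(\eta)$) is the same computation as in the paper. Where you diverge is in identifying $[1\otimes\eta]$ with the volume class. The paper does not trace the class through \eqref{VdBsIso} directly; instead it (i) exhibits the cap-product map \eqref{capgivesPD} on the Koszul complexes and checks it is an isomorphism of chain complexes, (ii) cites de Thanhoffer de V\"olcsey--Van den Bergh for the fact that the noncommutative Poincar\'e duality of a Calabi--Yau algebra is cap product with the volume class on the bar complexes, (iii) cites Berger--Lambre--Solotar for the compatibility of the two cap products under the Koszul quasi-isomorphisms, and (iv) invokes Van den Bergh's uniqueness of the volume class up to inner automorphism to conclude. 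Your route is more self-contained: you argue that a class is a volume class precisely when its image under \eqref{VdBsIso} is an isomorphism in $\mathbf D(A^e)$, and you verify this on the small model \eqref{Comp:KoszulResolutionOfA}, where the morphism becomes $\xi\mapsto\xi\cdot\eta$ and nondegeneracy of the Frobenius pairing (Proposition \ref{dualitybetweenCYcyclic}) makes it an isomorphism of complexes on the nose. This buys you independence from the external cap-product comparison and from the uniqueness statement, at the price of having to compute Van den Bergh's identification \eqref{VdBsIso} explicitly at the chain level --- the step you describe as ``trace $[1\otimes\eta]$ through this chain'' is exactly the crux, and it is the one step you assert rather than carry out (the paper, for its part, outsources the analogous point to the literature). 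Your closing remark that $\dim A^{\ac}_n=1$ rigidifies everything is also consistent with the paper's phrasing ``a(ny) nonzero top-degree element.'' No gap beyond the acknowledged sign bookkeeping, which is genuinely delicate here: note that the signs in \eqref{DifferentialInKoszulComplex} as printed do not literally reproduce the cancellation $\sum_i e_i\otimes\xi_i\eta-(-1)^ne_i\otimes\eta\,\xi_i=0$ claimed in the paper's proof, so your decision to leave the sign as $(-1)^{\ast}$ pending a consistent convention is the right one.
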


\begin{proof}
First from the symmetric pairing
$$
\langle x, y\rangle=(-1)^{|x||y|}\langle y,x\rangle$$
we see that in particular
$$
\xi_i\eta=(-1)^{n}\eta \xi_i,\quad\mbox{for all}\; \xi_i\in \Sigma^{-1}W^*.
$$
This means $\eta$ via the above embedding
$A^{\ac}\cong k\otimes A^{\ac}\subset A\otimes A^{\ac}$ is a cycle, namely,
$$b(1\otimes \eta)=\sum_i 
e_i\otimes \xi_i\eta-(-1)^n e_i\otimes \eta \xi_i=0.
$$
Also from the following operation
\begin{equation}\label{capgivesPD}
\begin{array}{ccl}
(A\otimes A^{!}, b)&\longrightarrow& (A\otimes A^{\ac}, b)\\
a\otimes f&\longmapsto& a\otimes (\eta\cap f)
\end{array}
\end{equation}
it is direct to check that this is an isomorphism
of chain complexes and induces an isomorphism
on the homology
$$
\mathrm{H}^\bullet(A\otimes A^!, b)\cong\mathrm{H}_{n-\bullet}(A\otimes A^{\ac}, b).
$$
Thus by Proposition \ref{lemma:identityofHochschild}
we have an isomorphism
\begin{equation}\label{NCPDofVdB}
\mathrm{HH}^\bullet(A)\cong\mathrm{HH}_{n-\bullet}(A).
\end{equation}
We next need to check that the volume class represented above is identical
to the one of Van den Bergh (\cite{VdB0}).

First, in \cite{dTdVVdB}
de Thanhofer de Volcsey and Van den Bergh showed that
the noncommutative Poincar\'e duality for Calabi-Yau algebras
is given by a class $\eta\in\mathrm{HH}_n(A)$
such that
\begin{equation}
\begin{array}{ccl}
\mathrm{HH}^\bullet(A)&\longrightarrow&\mathrm{HH}_{n-\bullet}(A)\\
f&\longmapsto&\eta\cap f
\end{array}
\end{equation}
is an isomorphism,
where the ``cap product" $\cap$ is given on the Hochschild chain level as follows:
\begin{equation}
\begin{array}{ccl}
\bar{\mathrm{CH}}_{m}(A)\times \bar{\mathrm{CH}}^{n}(A)&\stackrel{\cap}
\longrightarrow &\bar{\mathrm{CH}}_{m-n}(A)\\
(\alpha,\quad\quad f )&\longmapsto&
\left\{
\begin{array}{cl}
(a_0f(\bar{a}_1,\ldots,\bar{a}_n),\bar{a}_{n+1},\ldots,\bar{a}_m),&\mbox{if}\; m\geq n\\
0,&\mbox{otherwise,}
\end{array}\right.
\end{array}
\end{equation} 
where $\bar{\mathrm{CH}}_\bullet(A)$ and $\bar{\mathrm{CH}}^\bullet(A)$
are the reduced Hochschild chain and cochain complexes respectively
({\it c.f.} Loday \cite{Loday} for these notions).

Second, on the Koszul complexes we have an analogous cap product
given by the following (note that \eqref{capgivesPD} is just a special case)
\begin{equation}
\begin{array}{ccl}
(A\otimes A^{\ac})\times (A\otimes A^!)&\stackrel{\cap}\longrightarrow& A\otimes A^{\ac}\\
(a\otimes u,  b\otimes f)&\longmapsto& ab\otimes ( uf).
\end{array}
\end{equation}
It has been shown by Berger et. al. \cite{BLS} that for Koszul algebras,
these two versions of cap product on the homology level, via the isomorphism given
in Proposition \ref{lemma:identityofHochschild}
are the same.
Thus the isomorphism \eqref{NCPDofVdB}
is a version of noncommutative Poincar\'e duality in the sense of Van den Bergh.

Third, by Van den Bergh \cite{VdB3} the volume class of the noncommutative
Poincar\'e duality, if it exists, is unique up to an inner automorphism of $A$.

Thus by the above three arguments, the noncommutative Poincar\'e duality 
of \eqref{NCPDofVdB} is identical to the one of Van den Bergh given in \cite{VdB0},
possibly up to an inner automorphism of $A$.
This completes the proof.
\end{proof}

Now consider the coproduct
$$\Delta(\eta)=\sum  \eta^1_i\otimes\eta^2_i\in A^{\ac}\otimes A^{\ac}.$$
Observe that we have an embedding
\begin{equation}\label{embeddingofAac}
A^{\ac} \cong k\otimes A^{\ac} \otimes k\subset \tilde R\otimes
A^{\ac} \otimes \tilde R=\Omega^1_{\mathrm{nc}}\tilde R[-1],\quad a\mapsto d([a]).
\end{equation}
Via this embedding,
$\Delta(\eta)$ corresponds to
an element
$$\omega:=\sum d([\eta_i^1]) \otimes d([\eta_i^2])
\in\Omega^{2}_{\mathrm{nc}}\tilde R.$$

\begin{lemma}\label{volumeformasdRcycle}
$\omega$
descends to a $\partial$- and $d$-closed
cycle of degree $n$ in $\mathrm{DR}_{\mathrm{nc}}^2 \tilde R$.
\end{lemma}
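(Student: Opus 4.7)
The plan is to verify separately: (i) the internal degree of $\omega$ equals $n$ and the element descends to the commutator quotient; (ii) $\omega$ is $\partial$-closed in $\mathrm{DR}^2_{\mathrm{nc}}\tilde R$. Only the second part is non-trivial. First I would check the degree: since $\Delta$ preserves the grading of $A^{\ac}$ we have $|\eta^1_i|+|\eta^2_i|=n$, and under the identification $\Omega^1_{\mathrm{nc}}\tilde R[-1]\cong\tilde R\otimes A^{\ac}\otimes\tilde R$ together with the embedding \eqref{embeddingofAac}, each $d([\eta^r_i])$ sits in internal degree $|\eta^r_i|$, so the product $d([\eta^1_i])\,d([\eta^2_i])$ is of internal degree $n$. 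Descent to $\mathrm{DR}^2_{\mathrm{nc}}\tilde R=(\Omega^2_{\mathrm{nc}}\tilde R)_{\natural}$ is automatic and degree-preserving.

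For $\partial$-closedness, the cobar differential on generators is $\partial[a]=-\sum_{(a)}[a^1][a^2]$ (with $\Delta(a)=\sum_{(a)}a^1\otimes a^2$), so the anticommutation $d\partial+\partial d=0$ and the Koszul-signed Leibniz rule for $d$ give
\[\partial(d[a])\;=\;\sum_{(a)}\Bigl(d[a^1]\cdot[a^2]\;+\;(-1)^{|a^1|-1}[a^1]\cdot d[a^2]\Bigr).\]
Applying the Leibniz rule for $\partial$ to $\omega=\sum_i d[\eta^1_i]\,d[\eta^2_i]$, substituting this in each factor, and invoking coassociativity to rewrite everything in terms of $\Delta^{(2)}\eta=\sum\eta_{(1)}\otimes\eta_{(2)}\otimes\eta_{(3)}$, yields four families of 2-forms: two of the type $d[\eta_{(1)}]\,[\eta_{(2)}]\,d[\eta_{(3)}]$ (coming respectively from applying $\partial d$ to the first factor with its second Leibniz term, and to the second factor with its first Leibniz term), plus one family each of the types $[\eta_{(1)}]\,d[\eta_{(2)}]\,d[\eta_{(3)}]$ and $d[\eta_{(1)}]\,d[\eta_{(2)}]\,[\eta_{(3)}]$. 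Graded-cyclic rotation in $\mathrm{DR}^2_{\mathrm{nc}}\tilde R$ identifies the latter two families up to Koszul sign, and, combined with the Koszul signs already present from the Leibniz rules, forces the two first-type families to cancel. The conclusion is $\partial\omega\equiv 0$ in $\mathrm{DR}^2_{\mathrm{nc}}\tilde R$.

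The main obstacle will be sign bookkeeping, which receives input from three sources: the one-step degree shift in $\tilde R=T(A^{\ac}[1])$ giving $|[a]|=|a|-1$ and $|d[a]|=|a|$; the Koszul-signed Leibniz rules together with $\partial d+d\partial=0$; and the cyclic-rotation sign $(-1)^{|x|(|y|+|z|)}$ modulo commutators. The algebraic fact that makes all signs align is the graded co-symmetry of $\Delta(\eta)$, which follows from the symmetry of the volume pairing on $A^!$ (equivalently from $b(1\otimes\eta)=0$, as established in the computation immediately preceding the statement of this lemma). I expect the verification to reduce, once this symmetry is invoked, to a direct pairwise cancellation among the four monomial types per Sweedler component of $\Delta^{(2)}(\eta)$.
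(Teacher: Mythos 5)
Your proposal follows essentially the same route as the paper: expand $\partial\omega$ using $\partial\circ d=-d\circ\partial$, the Leibniz rule and the cobar differential, then cancel the four resulting families in pairs --- the two with the undifferentiated factor in the middle cancel by coassociativity alone, while the remaining two cancel after a cyclic rotation modulo commutators using the cyclicity of the pairing on $A^!$, i.e.\ \eqref{cyclicconditionforcoalg} (your ``graded co-symmetry of $\Delta(\eta)$''). The only point you omit is the (trivial) observation that $[\omega]$ is also $d$-closed, which the paper records explicitly since ``cycle'' here means closed for both differentials of the mixed complex; your parenthetical attributing the middle-bare terms to the ``second'' and ``first'' Leibniz terms is also reversed, but this does not affect the argument.
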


\begin{proof}
Denote by $[\omega]$
the image of $\omega$ in $\mathrm{DR}^{2}_{\mathrm{nc}}\tilde R$.
We show $[\omega]$ is closed with respect to both $d$ and $\partial$. First, $[\omega]$ is automatically
$d$-closed. 
Second, applying $\partial$ to $\omega$, we have
\begin{eqnarray*}
\partial(\omega)
&=&\sum \partial\circ d ([\eta_i^1]) \otimes d ([\eta_i^2])+(-1)^{|\eta_i^1|}d([\eta_i^1])\otimes
\partial\circ d([\eta_i^2]) \\
&=&-\sum d\circ\partial ([\eta_i^1]) \otimes d ([\eta_i^2])+(-1)^{|\eta_i^1|}d([\eta_i^1])\otimes
d\circ\partial ([\eta_i^2]) \\
&=&-\sum (-1)^{|\eta_i^{11}|}d( [\eta_i^{11}|\eta_i^{12}]) \otimes d ([\eta_i^2])
+(-1)^{|\eta_{i}^1|+|\eta_i^{21}|}d([\eta_i^1])\otimes
d([\eta_i^{21}|\eta_i^{11}])\\
&=&-\sum (-1)^{|\eta_i^{11}|} d([\eta_i^{11}])\cdot [\eta_i^{12}]\otimes d([\eta_i^2])
-[\eta_i^{11}]\cdot d([\eta_i^{12}]) \otimes d([\eta_i^2])\\
&&-\sum(-1)^{|\eta_{i}^1|+|\eta_i^{21}|} 
d([\eta_i^1])\otimes d([\eta_i^{21}])\cdot [\eta_i^{22}]-
 (-1)^{|\eta_{i}^1|}d([\eta_i^1])\otimes [\eta_i^{21}]\cdot
d([\eta_i^{22}]),
\end{eqnarray*}
where we write $\Delta (\eta_i^1)=\sum \eta_i^{11}\otimes \eta_i^{12}$
and $\Delta (\eta_i^2)=\sum \eta_i^{21}\otimes \eta_i^{22}$.
In the last equality, after descending to $\mathrm{DR}^2_{\mathrm{nc}}\tilde R$,
the first and the last summands cancel with each other
due to the co-associativity of $A^{\ac}$, while the second and the third summands
cancel with each other
due to the cyclic condition of the pairing on $A^!$, which is equivalent to \eqref{cyclicconditionforcoalg}. 
This proves the statement.
\end{proof}

\begin{theorem}\label{thm:mainthm1}
Let $A$ be a Koszul Calabi-Yau algebra of dimension $n$.
Let $\tilde R=\Omega(\tilde A^{\ac})$. Then
$\tilde R$
has a $(2-n)$-shifted bi-symplectic structure.
\end{theorem}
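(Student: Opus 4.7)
The plan is to verify the two conditions in the definition of a $(2-n)$-shifted bi-symplectic structure for the class $[\omega]\in\mathrm{DR}^2_{\mathrm{nc}}\tilde R$ constructed above. The first condition---that $[\omega]$ is a closed form of total degree $n$ (matching $2-(2-n)=n$)---is already established by Lemma \ref{volumeformasdRcycle}, since $[\omega]$ is automatically $d$-closed and $\partial$-closed by the argument there. So the real content is to show that the contraction
\[
\iota_{(-)}\omega:\mathbb D\mathrm{er}\;\tilde R[1]\;\longrightarrow\;(\Omega^1_{\mathrm{nc}}\tilde R[-1])[n]
\]
is a quasi-isomorphism of DG $\tilde R$-bimodules. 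Note that $\iota_{(-)}\omega$ is automatically an $\tilde R$-bimodule map by Lemma 2.5(2) and commutes with $\partial$ since $\partial[\omega]=0$.

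My strategy is to show that, after applying the identifications of Proposition~3.5, this map is actually an isomorphism of the underlying graded $\tilde R$-bimodules (which is stronger than a quasi-isomorphism). Under those identifications the source is $\tilde R\otimes A^!\otimes \tilde R$ and the target is $(\tilde R\otimes A^{\ac}\otimes\tilde R)[n]$. Since $\tilde R=T(A^{\ac}[-1])$ is quasi-free, a double derivation $\Theta$ is determined by its values on the generators $[a]$ with $a\in A^{\ac}$, so an element $1\otimes f\otimes 1\in \tilde R\otimes A^!\otimes\tilde R$ corresponds to the derivation $\Theta_f$ defined on generators by $\Theta_f([a])=\langle f,a\rangle\cdot 1\otimes 1$, extended by the Leibniz rule. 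I would then compute $\iota_{\Theta_f}\omega$ directly from the definition \eqref{formula:reducedcontraction}: since $\omega=\sum d([\eta_i^1])\otimes d([\eta_i^2])$ only involves $d$ applied to single generators, the contraction collapses to a sum
\[
\iota_{\Theta_f}\omega\;=\;\sum\pm\,\langle f,\eta_i^1\rangle\,d([\eta_i^2])\;\pm\;\sum\pm\,\langle f,\eta_i^2\rangle\,d([\eta_i^1]),
\]
which, via the embedding $A^{\ac}\hookrightarrow \Omega^1_{\mathrm{nc}}\tilde R[-1]$ of \eqref{embeddingofAac}, is the image of the element $f\cdot\eta+\eta\cdot f\in A^{\ac}$ obtained by the right/left contraction (cap product) of $f\in A^!$ with $\eta\in A^{\ac}$. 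The Calabi-Yau hypothesis, via Proposition~\ref{dualitybetweenCYcyclic}, says precisely that $A^!$ is cyclic of degree $n$, and hence the cap-with-$\eta$ map $A^!\to A^{\ac}[n]$ is an isomorphism of graded vector spaces (indeed of $A^!$-bimodules). Extending by the free $\tilde R$-bimodule structure on either side, one concludes that $\iota_{(-)}\omega$ is an isomorphism of graded $\tilde R$-bimodules, and in particular a quasi-isomorphism.

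The main obstacle will be the bookkeeping of signs and degree conventions in the reduced contraction $\iota_\Theta={}^\circ i_\Theta$ and in the embedding $a\mapsto d([a])$, compounded by the passage $\omega\mapsto[\omega]$ to $\mathrm{DR}^2_{\mathrm{nc}}\tilde R$ (whose well-definedness on the contraction is Lemma~2.5(1)). The symmetry between the two summands above---i.e.\ the identification of contracting in the left vs. right factor of $\omega$---must be reconciled using exactly the cyclicity relation \eqref{cyclicconditionforcoalg} that already appeared in the proof of Lemma~\ref{volumeformasdRcycle}. Once these signs are in order, the only non-formal input is the equivalence ``Koszul CY $\iff$ $A^!$ cyclic'' from Proposition~\ref{dualitybetweenCYcyclic}, which turns the non-degeneracy of the pairing on $A^!$ directly into the non-degeneracy of $\iota_{(-)}\omega$.
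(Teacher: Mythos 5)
Your proposal follows essentially the same route as the paper: identify $\mathbb D\mathrm{er}\,\tilde R[1]$ with $\mathrm{Hom}(A^{\ac},k)\otimes(\tilde R\otimes\tilde R)$ via quasi-freeness, compute the reduced contraction of a generator against $\omega=\sum d([\eta_i^1])\otimes d([\eta_i^2])$ explicitly (the paper's factor of $2$ is exactly your two cyclically-identified summands), and reduce non-degeneracy of $\iota_{(-)}\omega$ to non-degeneracy of the cap-with-$\eta$ pairing $A^!\cong A^{\ac}[n]$, which is the Calabi--Yau hypothesis via Proposition \ref{dualitybetweenCYcyclic}. The only difference is that you spell out the closedness step (Lemma \ref{volumeformasdRcycle}) and the role of cyclicity more explicitly than the paper does, which is a matter of exposition rather than substance.
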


\begin{proof}
Since $\Omega^1_{\mathrm{nc}} \tilde R[-1]=\tilde R\otimes  A^{\ac}\otimes \tilde R$,
we have
\begin{eqnarray*}
\mathbb D\mathrm{er}\; \tilde R[1]&=&
\mathrm{Hom}_{\tilde R^e}(\Omega^1_{\mathrm{nc}} \tilde R[-1], \tilde R\otimes \tilde R)\\
&=&\mathrm{Hom}_{\tilde R^e}(\tilde R\otimes A^{\ac}\otimes \tilde R, \tilde R\otimes \tilde R)\\
&=&\mathrm{Hom}( A^{\ac}, \tilde R\otimes \tilde R)\\
&=&\mathrm{Hom}(A^{\ac}, k)\otimes(\tilde R\otimes\tilde R).
\end{eqnarray*}
For any $f\otimes r_1\otimes r_2\in\mathrm{Hom}(A^{\ac}, k)\otimes(\tilde R\otimes\tilde R)$,
by \eqref{formula:reducedcontraction}
its reduced contraction with $\omega$
is 
$$
2\cdot\sum  f(\eta_i^1)\cdot r_2\otimes d\eta_i^2\otimes r_1
\in\Omega^1_{\mathrm{nc}}\tilde R[-1].
$$
In other words,
the reduced contraction is essentially given
by 
$
A^!\stackrel{\eta\cap(-)}\longrightarrow A^{\ac}[n]
$.
Since $\eta$ and thus
$\omega$ are non-degenerate of total degree $n$, we thus have
\begin{equation}\label{iso:tangentandcotangent}
\mathbb D\mathrm{er}\tilde R[1]\cong(\Omega^1_{\mathrm{nc}}\tilde R[-1])[2-n]
\end{equation}
as $\tilde R$-bimodules.
This proves the theorem.
\end{proof}

By taking the commutator quotient space of both sides of the 
above \eqref{iso:tangentandcotangent} and 
Propositions \ref{Prop:cotangentcomplex} and \ref{Prop:tangentcomplex},
we once 
again obtain the noncommutative Poincar\'e duality:
$$
\mathrm{HH}^\bullet(A)\cong\mathrm{HH}_{n-\bullet}(A),
$$
which coincides with Van den Bergh's one.
In general, suppose $R$ has a (shifted) bi-symplectic structure $\omega$,
then Crawley-Boevey et. al. showed in \cite[Lemma 2.8.6]{CBEG} that
there is a commutative diagram
$$
\xymatrixcolsep{4pc}
\xymatrix{
\mathbb D\mathrm{er}\; R\ar[d]_{\iota_{(-)}\omega}^{\cong}\ar[r]^{\natural}&
(\mathbb D\mathrm{er}\; R)_{\natural}
\ar[d]^{(\iota_{(-)}\omega)_{\natural}}_{\cong}\\
\Omega^1_{\mathrm{nc}}R\ar[r]^{\natural}&\mathrm{DR}^1_{\mathrm{nc}}R.
}
$$

\section{Representation schemes and the shifted symplectic structure}\label{sect:defofsss}

In this section, we briefly go over the relationship between the shifted bi-symplectic
structure of a DG algebra and the shifted symplectic structure on its DG representations.

\subsection{Representation functors}
Let $\mathbf{DGA}$ be the category of associative, unital DG $k$-algebras,
and $\mathbf{CDGA}$ its subcategory of DG commutative $k$-algebras.
Fix a finite dimensional vector space $V$, and consider the following functor:
\begin{equation*}
\mathrm{Rep}_V(A): \mathbf{CDGA}\to \mathbf{Sets},\;
B \mapsto \mathrm{Hom}_{\mathbf{DGA}}(A, B\otimes\mathrm{End}\; V).
\end{equation*}
The following result generalizes the result of Bergman \cite{Bergman} and Cohn \cite{Cohen} 
for associative algebras:

\begin{theorem}[\cite{BKR} Theorem 2.2]\label{idoftwosets}
The functor $\mathrm{Rep}_V(A)$ is representable, that is, 
there exists an object in $\mathbf{CDGA}$, say $A_V$, depending only on 
$A$ and $V$, 
such that
\begin{equation}\label{equiv:homsets}
\mathrm{Hom}_{\mathbf{DGA}}(A, B\otimes\mathrm{End}\; V)=
\mathrm{Hom}_{\mathbf{CDGA}}(A_V, B).
\end{equation}
\end{theorem}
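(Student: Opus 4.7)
The plan is to construct $A_V$ explicitly by generators and relations, mimicking Bergman's argument in the classical setting and then checking that the DG structure passes through without incident.

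First, I would fix a basis $\{v_1,\dots,v_n\}$ of $V$ with dual basis $\{v_1^*,\dots,v_n^*\}$, giving basis $\{e_{ij}:=v_i\otimes v_j^*\}$ of $\mathrm{End}\,V$ and dual basis $\{e_{ij}^*\}$ of $\mathrm{End}(V)^*$. For each homogeneous $a\in A$, introduce formal elements $a_{ij}:=a\otimes e_{ij}^*$ placed in degree $|a|$, and form the free graded-commutative DG algebra
\[
\mathcal F:=\mathrm{Sym}_k\bigl(A\otimes \mathrm{End}(V)^*\bigr),
\]
whose internal differential is induced by $d(a_{ij})=(\partial a)_{ij}$ and the graded Leibniz rule. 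There is then a canonical $k$-linear map
\[
\pi:A\longrightarrow \mathcal F\otimes \mathrm{End}\,V,\qquad a\longmapsto\sum_{i,j}a_{ij}\otimes e_{ij},
\]
which is a chain map by construction and is $k$-linear in $a$.

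Next I would introduce the DG ideal $I\subset\mathcal F$ generated by the matrix entries of $\pi(1_A)-1$ and $\pi(ab)-\pi(a)\pi(b)$ for all homogeneous $a,b\in A$, and set $A_V:=\mathcal F/I$. Crucially, $I$ is stable under $d$: applying the differential on $\mathcal F\otimes \mathrm{End}\,V$ to $\pi(ab)-\pi(a)\pi(b)$ and using that $\pi$ is already a chain map and that $\partial$ is a graded derivation on $A$, one sees $d(\pi(ab)-\pi(a)\pi(b))$ reduces to a $\mathcal F$-linear combination of the same type of generators of $I$ (with $a$ or $b$ replaced by $\partial a$ or $\partial b$). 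Hence $d$ descends to $A_V$, making $A_V$ a commutative DG algebra and $\pi:A\to A_V\otimes \mathrm{End}\,V$ a morphism in $\mathbf{DGA}$.

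To establish the bijection \eqref{equiv:homsets} I would exhibit mutually inverse maps in each direction. Given $\phi\in\mathrm{Hom}_{\mathbf{CDGA}}(A_V,B)$, define $\rho_\phi:=(\phi\otimes\mathrm{id})\circ\pi:A\to B\otimes\mathrm{End}\,V$; this is multiplicative, unital and compatible with differentials because $\phi$ has those properties and kills $I$. Conversely, given $\rho:A\to B\otimes\mathrm{End}\,V$ in $\mathbf{DGA}$, write $\rho(a)=\sum_{ij}\rho_{ij}(a)\otimes e_{ij}$; the rule $a_{ij}\mapsto\rho_{ij}(a)$ extends (by freeness of $\mathcal F$) to a morphism $\tilde\phi_\rho:\mathcal F\to B$ of graded-commutative algebras which, by the multiplicativity and chain-map property of $\rho$, annihilates every generator of $I$, and therefore factors through a morphism $\phi_\rho:A_V\to B$ in $\mathbf{CDGA}$. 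Checking $\phi\mapsto\rho_\phi$ and $\rho\mapsto\phi_\rho$ are mutually inverse is a direct unravelling of definitions, and functoriality in $B$ is automatic.

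The only delicate step is verifying that $I$ is preserved by $d$; once this is settled, the argument is formally identical to the classical Bergman-Cohn representability statement, with the symmetric algebra on $A\otimes\mathrm{End}(V)^*$ playing the role of the polynomial ring in the matrix-entry variables and the graded commutativity of the target $B$ being exactly what is needed to pass from a tensor algebra to a symmetric algebra.
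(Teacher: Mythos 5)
Your construction is correct: the graded-symmetric algebra on $A\otimes\mathrm{End}(V)^*$ modulo the ideal forcing $\pi$ to be a unital algebra map is the standard Bergman--Cohn presentation of the representation algebra, and your verification that the ideal $I$ is $d$-stable (using that $\pi$ is a chain map and $\partial$ a derivation) is exactly the point that needs checking in the DG upgrade; the adjunction argument then goes through verbatim. The paper, however, follows \cite{BKR} and obtains $A_V$ not by generators and relations but as the composite of two left adjoints: $A_V=(\sqrt[V]{A})_{\natural\natural}$, where $\sqrt[V]{A}=(A\ast_k\mathrm{End}\,V)^{\mathrm{End}\,V}$ is the (DG version of the) matrix-reduction functor, left adjoint to $-\otimes\mathrm{End}\,V$ on $\mathbf{DGA}$, and $(-)_{\natural\natural}$ is abelianization, left adjoint to the inclusion $\mathbf{CDGA}\hookrightarrow\mathbf{DGA}$; representability is then immediate from composing adjunctions. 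The two routes produce canonically isomorphic objects. Your explicit presentation has the virtue of making the matrix-entry generators and the differential visible at once --- indeed, specializing it to a quasi-free algebra recovers the paper's explicit description of $R_V$ by the variables $x^\alpha_{ij}$, since the multiplicativity relations eliminate the generators attached to non-trivial products --- whereas the functorial route of \cite{BKR} buys for free the naturality in $A$, the compatibility with the model structures (needed later for the derived functor $\mathbf{L}(-)_V$), and the intermediate object $\sqrt[V]{A}$, which is used elsewhere in that theory. Either argument is acceptable here; just note that for the later homotopical statements the adjoint-functor formulation is the one actually being invoked.
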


More precisely, in \cite{BKR} the authors considered the following two functors:
\begin{equation*}
\sqrt[V]{-}:\mathbf{DGA}\to\mathbf{DGA},\; A\mapsto(A\ast_k\mathrm{End}\; V)^{\mathrm{End}\; V}.
\end{equation*}
and
\begin{equation*}
(-)_{\natural\natural}:\mathbf{DGA}\to\mathbf{CDGA},\; A\mapsto A/\langle [A,A]\rangle,
\end{equation*}
where $\langle [A,A]\rangle$ is the ideal of $A$ generated by the commutators.
Then the functor
\begin{equation}\label{functor:Rep} 
(-)_V:\mathbf{DGA}\to\mathbf{CDGA},\; A\mapsto A_V
\end{equation}
is given by the composition of the above two functors, namely,
$
A_V=(\sqrt[V]{A})_{\natural\natural}$.
In what follows we sometimes 
also write $A_V$ as $\mathrm{Rep}_V(A)$.

In \eqref{equiv:homsets},
if we take $B=A_V$, then we have
\begin{equation}\label{equiv:universalmap}
\mathrm{Hom}(A, A_V\otimes\mathrm{End}\; V)=\mathrm{Hom}(A_V, A_V).
\end{equation}
The identity map on the right
hand side
corresponds to a map
$$\pi_V: A\to A_V\otimes\mathrm{End}\; V$$
on the left hand side,
which is usually called the {\it universal representation map}.

\begin{example}[Rep for quasi-free algebras]

For a quasi-free algebra $(R, \partial)$ and $V=k^n$, the DG commutative algebra
$R_V$ can be described explicitly:
Let
$\{x^\alpha\}_{\alpha\in I}$ be a set of generators of $R$.
Consider a free graded algebra
$R'$ on generators
$\{x_{ij}^\alpha: 1\le i, j\le n, \alpha\in I\}$,
where
$|x_{ij}^\alpha|=|x^\alpha|$ for all $i, j$. 
Form matrices
$X^{\alpha}:=(x_{ij}^{\alpha})$, and
define
the algebra
map
$$\pi: R\to M_n(R'),\quad x^\alpha\mapsto X^{\alpha},$$
where $M_n(R')$ is the algebra of $n\times n$-matrices with entries in $R'$.
Let
$$\partial(x_{ij}^{\alpha}):=(\pi(\partial x^{\alpha}))_{ij},$$
and extend it to $R'$ by linearity and the Leibniz rule.
We thus obtain a DG algebra $(R', \partial)$,
and 
$R_V$ is $R_{\natural\natural}'$ with the differential
induced from $\partial$ (see \cite[Theorem 2.8]{BKR} for a proof).
\end{example}

\subsection{$\mathrm{GL}$-invariants and the trace map}

Observe that $\mathrm{GL}(V)$ acts on $\mathrm{Rep}_V(A)$ by conjugation.
More precisely, 
then
any $g\in\mathrm{GL}(V)$
gives a unique automorphism of $A_V$ 
which, under the identity \eqref{equiv:homsets}, corresponds to the composition
$$A\stackrel{\pi_V}\longrightarrow \mathrm{End}(A_V)
\stackrel{\mathrm{Ad}(g)}\longrightarrow \mathrm{End}(A_V).$$
This action is natural in $A$, and hence defines a functor
\begin{equation}\label{functor:GLinvariants}
\mathrm{Rep}_V(-)^{\mathrm{GL}}: \mathbf{DGA}\to\mathbf{CDGA},\; A\mapsto (A_V)^{\mathrm{GL}},
\end{equation}
where $(-)^{\mathrm{GL}}$ means the $\mathrm{GL}(V)$-invariants.

Now consider the following composite map
$$
A\stackrel{\pi_V}{\longrightarrow} \mathrm{End}(A_V)
\stackrel{\mathrm{Tr}}\longrightarrow A_V,$$
which is $\mathrm{GL}(V)$-invariant
and factors through
$A_{\natural}$,
we get a map
\begin{equation}\label{tracemap}
\mathrm{Tr}: A_\natural\to(A_V)^{\mathrm{GL}}.
\end{equation}
If $A$ is an associative algebra
(i.e., a DG algebra concentrated in degree 0),
then the famous result of Procesi says
that the image of $\mathrm{Tr}$
generates
$\mathrm{Rep}_V(A)^{\mathrm{GL}}$; in other words,
if we extend $\mathrm{Tr}$ to be a commutative algebra map
$$\mathrm{Tr}: \mathbf{\Lambda}^\bullet A_\natural\to 
(A_V)^{\mathrm{GL}},$$ 
then it is surjective.
However, for an arbitrary DG algebra, Berest
and Ramadoss showed in \cite{BR} that
this is in general not true on the homology level; there are some homological
obstructions for $\mathrm{Tr}$ to be so.


\subsection{Van den Bergh's functor}
Let $(R,\partial)$ be a DG algebra.
Suppose $(M,\partial_M)$ is a DG $R$-bimodule.
Let $\pi: R\to R_V\otimes\mathrm{End}\; V$ be the universal
representation of $R$,
which means the map on the left hand side of \eqref{equiv:homsets} 
that corresponds to the identity map on the right hand side.
Then $\pi$ gives an $R$-bimodule
structure on $R_V\otimes\mathrm{End}\; V$.
Denote
\begin{equation}\label{functor:VdB}
M_V:=M\otimes_{R^e} (R_V\otimes\mathrm{End}\; V),
\end{equation}
which is now a DG $R_V$-module.
More specifically, let $V=k^n$, then
$M_V$ is generated by symbols $m_{ij}$, $1\le i, j\le n$, for each $m\in M$, with the action of $R_V$
given by
$$ (r\cdot m)_{ij}=\sum_{k}r_{ik}\cdot m_{kj}, \quad
(m\cdot r)_{ij}=\sum_{k} r_{kj}\cdot m_{ik},$$
and with the differential, denoted by $\partial_{M_V}$, given by
$$
\partial_{M_V}(m_{ij})=(\partial_M m)_{ij},\quad\mbox{for all}\; m\in M.
$$
The assignment from the category of DG $R$-bimodules
to the category of DG $R_V$-modules
$$
\mathbf{DGBimod}\; R\to\mathbf{DGMod}\; R_V,\; M\mapsto M_V
$$
is a well-defined functor, and is first introduced by Van den Bergh in \cite{VdB2}.
Next, we apply Van den Berg's idea to the case of noncommutative
differential forms and poly-vectors.

Let $(R, \partial)$ be a DG commutative algebra over $k$. Let
$$
I:=\mathrm{ker}(R\otimes R\stackrel{\mu }
\longrightarrow R)
$$
be the kernel of the multiplication map
and let $\Omega^1_{\mathrm{com}}R:=I/I^2$, which is the set of {\it K\"ahler differentials} of $R$.
Let 
$$\Omega^p_{\mathrm{com}}R=\mathbf{\Lambda}_R^p(\Omega^1_{\mathrm{com}}R[-1]).$$
Similarly to the DG algebra case, we have the degree $1$ de Rham differential
$$
d: \Omega^\bullet_{\mathrm{com}}R\to \Omega^{\bullet}_{\mathrm{com}}R,
$$
which makes $(\Omega^\bullet_{\mathrm{com}}R, d)$ into a DG cochain algebra.
The differential $\partial$ on $R$ also gives a degree $-1$ differential on $\Omega^\bullet_{\mathrm{com}}R$,
which also respects the product and commutes with $d$. 

The dual space of the cotangent space
$
\mathrm{Hom}_R(\Omega^1_{\mathrm{com}} R, R)
$
is called {\it the complex of vector fields} of $R$, and is identified with $\mathrm{Der}\; R$.

\begin{proposition}[\cite{VdB2} Proposition 3.3.4]\label{prop:VdBonformsandfields}
Suppose $R$ is a DG algebra.
Then \begin{enumerate}
\item $\big(\Omega^1_{\mathrm{nc}}R\big)_V=\Omega^1_{\mathrm{com}}(R_V);$
\item $\big(\mathbb D\mathrm{er}\; R\big)_V=\mathrm{Der}\; R_V$.
\end{enumerate}
\end{proposition}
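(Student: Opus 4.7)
Both parts will be proved by representability and Yoneda's lemma, using the universal properties of $\Omega^1_{\mathrm{nc}}R$ and of the representation algebra $R_V$ (Theorem \ref{idoftwosets}), together with a Hom-tensor adjunction for Van den Bergh's functor $(-)_V$.

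\textbf{Adjunction.} The first step is to establish, for any DG $R$-bimodule $M$ and any DG $R_V$-module $N$, a natural isomorphism
\begin{equation*}
\mathrm{Hom}_{R_V}(M_V,\, N) \;\cong\; \mathrm{Hom}_{R^e}\!\big(M,\, N \otimes \mathrm{End}\, V\big),
\end{equation*}
where $N \otimes \mathrm{End}\, V$ carries the $R$-bimodule structure induced by the universal representation $\pi_V \colon R \to R_V \otimes \mathrm{End}\, V$. This follows from the standard tensor-hom adjunction applied to $M_V = M \otimes_{R^e} (R_V \otimes \mathrm{End}\, V)$ together with the self-duality $\mathrm{End}\, V \cong (\mathrm{End}\, V)^\ast$ provided by the trace pairing.

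\textbf{Part (1).} Applying the adjunction to $M = \Omega^1_{\mathrm{nc}}R$ and invoking the universal property of noncommutative $1$-forms yields
\begin{equation*}
\mathrm{Hom}_{R_V}\!\big((\Omega^1_{\mathrm{nc}}R)_V,\, N\big) \;=\; \mathrm{Der}\!\big(R,\, N \otimes \mathrm{End}\, V\big).
\end{equation*}
A derivation $D \colon R \to N \otimes \mathrm{End}\, V$ is the same as a lift of $\pi_V$ to an algebra map $R \to (R_V \oplus N\varepsilon) \otimes \mathrm{End}\, V$, where $R_V \oplus N\varepsilon$ is the trivial square-zero extension of $R_V$ by $N$. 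By Theorem \ref{idoftwosets} such lifts are in bijection with $R_V$-algebra sections $R_V \to R_V \oplus N\varepsilon$, that is, with $\mathrm{Der}(R_V,N)$. Chaining these identifications gives $\mathrm{Hom}_{R_V}((\Omega^1_{\mathrm{nc}}R)_V, N) \cong \mathrm{Hom}_{R_V}(\Omega^1_{\mathrm{com}}R_V, N)$ naturally in $N$, and Yoneda concludes $(\Omega^1_{\mathrm{nc}}R)_V = \Omega^1_{\mathrm{com}}R_V$.

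\textbf{Part (2).} Combining (1) with the adjunction gives
\begin{equation*}
\mathrm{Der}\, R_V \;=\; \mathrm{Hom}_{R_V}\!\big(\Omega^1_{\mathrm{com}}R_V,\, R_V\big)
\;=\; \mathrm{Hom}_{R^e}\!\big(\Omega^1_{\mathrm{nc}}R,\, R_V \otimes \mathrm{End}\, V\big).
\end{equation*}
On the other side, the canonical map
\begin{equation*}
\mathbb{D}\mathrm{er}\, R \otimes_{R^e} (R_V \otimes \mathrm{End}\, V)
\;\longrightarrow\;
\mathrm{Hom}_{R^e}\!\big(\Omega^1_{\mathrm{nc}}R,\, R_V \otimes \mathrm{End}\, V\big),
\end{equation*}
formed using the \emph{inner} $R^e$-structure on the left, is an isomorphism because for the quasi-free algebras of interest (in particular the cobar construction $\tilde R$) the bimodule $\Omega^1_{\mathrm{nc}}R$ is free, hence projective, over $R^e$. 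Putting these identifications together, $(\mathbb{D}\mathrm{er}\, R)_V = \mathrm{Der}\, R_V$, as required.

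\textbf{Main obstacle.} The delicate point is the interplay between the two $R^e$-structures on $R \otimes R$ — the outer structure that governs $\Omega^1_{\mathrm{nc}}R$ and the inner structure that enters the definition of $\mathbb{D}\mathrm{er}\, R$ — and the need to track Koszul signs when passing from a double derivation $\Theta \colon R \to R \otimes R$ to the ordinary derivation $R \to R_V \otimes \mathrm{End}\, V$ obtained by composing with $\pi_V \otimes \pi_V$ and multiplying the two $\mathrm{End}\, V$-factors in the correct order. One must also verify that all of the above identifications respect the internal differentials, so that they are isomorphisms of DG $R_V$-modules and not merely of graded modules; this amounts to checking, on generators $x^\alpha \in R$, that the rule $(dx^\alpha)_{ij} \leftrightarrow dx^\alpha_{ij}$ (respectively $(\partial/\partial x^\alpha)_{ji} \leftrightarrow \partial/\partial x^\alpha_{ij}$) intertwines $\partial$ on $R$ with the induced differential on $R_V$.
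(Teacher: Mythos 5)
The paper offers no proof of this statement at all: it is quoted directly from \cite{VdB2}, Proposition 3.3.4, so there is nothing in-paper to compare against. Your route --- the tensor-hom adjunction $\mathrm{Hom}_{R_V}(M_V,N)\cong\mathrm{Hom}_{R^e}(M,N\otimes\mathrm{End}\,V)$, the square-zero-extension description of derivations, representability via Theorem \ref{idoftwosets}, and Yoneda --- is the standard functorial argument and is essentially Van den Bergh's own; Part (1) as you have written it is sound. (For the quasi-free $\tilde R=\Omega(\tilde A^{\ac})$ one could instead argue completely explicitly, matching the free $R_V$-module generators $(dx^\alpha)_{ij}$ of $(\Omega^1_{\mathrm{nc}}R)_V$ with $d(x^\alpha_{ij})$ in $\Omega^1_{\mathrm{com}}(R_V)$ and dually for derivations; that version would also make the sign and differential compatibilities you defer to the ``main obstacle'' paragraph into a finite, checkable computation on generators rather than an announced task.)

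The one genuine soft spot is the justification of the isomorphism
$\mathbb{D}\mathrm{er}\,R\otimes_{R^e}(R_V\otimes\mathrm{End}\,V)\to \mathrm{Hom}_{R^e}(\Omega^1_{\mathrm{nc}}R,\,R_V\otimes\mathrm{End}\,V)$
in Part (2). ``Free, hence projective'' is not a sufficient reason: the base-change map $\mathrm{Hom}_{R^e}(M,X)\otimes_{R^e}P\to\mathrm{Hom}_{R^e}(M,X\otimes_{R^e}P)$ is an isomorphism when $M$ is \emph{finitely generated} projective, but for $M$ free of infinite rank the left side involves a tensor product that does not commute with the infinite direct product $\mathrm{Hom}_{R^e}(M,X)\cong\prod X$, and the map fails in general. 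So Part (2), unlike Part (1), is not true for an arbitrary DG algebra (the proposition as transcribed in the paper is itself overstated here), and your appeal to freeness must be sharpened to finite generation. The gap closes in every case the paper actually uses: when $A$ is Koszul Calabi--Yau, $A^!\cong(A^{\ac})^*$ is a finite-dimensional Frobenius algebra, so $A^{\ac}$ is finite-dimensional and $\Omega^1_{\mathrm{nc}}\tilde R=\tilde R\otimes A^{\ac}\otimes\tilde R$ is finitely generated free over $\tilde R^e$. You should state this finiteness hypothesis explicitly rather than leaning on projectivity.
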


From this proposition, we immediately have: 

\begin{proposition}[\cite{VdB2} Corollary 3.3.5]
Suppose $R$ is a quasi-free DG algebra. Then
\begin{equation}\label{identityofformsandvectors}
\big(T_R(\Omega^1_{\mathrm{nc}}R[-1])\big)_V=\Omega^\bullet_{\mathrm{com}}(R_V)\quad
\mbox{and}\quad
\big(T_R(\mathbb D\mathrm{er}\;R[1])\big)_V=\mathbf\Lambda^\bullet (\mathrm{Der}\;R_V[1]).
\end{equation}
\end{proposition}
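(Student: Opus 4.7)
The plan is to derive both identities from Proposition \ref{prop:VdBonformsandfields} via a Yoneda-style argument: I will check that both sides of each equality represent the same functor on $\mathbf{CDGA}$. Since $R$ is quasi-free, both $T_R(\Omega^1_{\mathrm{nc}}R[-1])$ and $T_R(\mathbb D\mathrm{er}\;R[1])$ are themselves quasi-free as DG associative algebras; indeed, the chosen generators of $R$ together with the symbols $\{d x\}$ (respectively, dual symbols $\{\partial/\partial x\}$) form a free $R$-bimodule of generators layered on top of the quasi-free $R$.

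For any $B\in\mathbf{CDGA}$, the universal property of the representation functor \eqref{functor:Rep} gives
\begin{equation*}
\mathrm{Hom}_{\mathbf{CDGA}}\bigl((T_R(\Omega^1_{\mathrm{nc}}R[-1]))_V,\,B\bigr)
\;=\;
\mathrm{Hom}_{\mathbf{DGA}}\bigl(T_R(\Omega^1_{\mathrm{nc}}R[-1]),\,B\otimes\mathrm{End}\;V\bigr).
\end{equation*}
The universal properties of the free tensor algebra and of $\Omega^1_{\mathrm{nc}}R$ identify the right-hand side with pairs $(\phi,D)$, where $\phi:R\to B\otimes\mathrm{End}\;V$ is a DG algebra map and $D:R\to B\otimes\mathrm{End}\;V$ is a degree $+1$ graded $\phi$-derivation (the bimodule structure on the target being induced by $\phi$) commuting with the differentials. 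By Proposition \ref{prop:VdBonformsandfields}(1), the datum $\phi$ is equivalent to a commutative DG algebra map $\phi_V:R_V\to B$, and the derivation $D$ corresponds, via the natural identification $(\Omega^1_{\mathrm{nc}}R)_V = \Omega^1_{\mathrm{com}}(R_V)$ and the universal property of K\"ahler differentials, to a $\phi_V$-derivation $D_V:R_V\to B$ of degree $+1$, i.e.\ to an $R_V$-linear map $\Omega^1_{\mathrm{com}}(R_V)[-1]\to B$.

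Finally, pairs $(\phi_V,D_V)$ are exactly the data of a commutative DG algebra map
$$
\Omega^\bullet_{\mathrm{com}}(R_V)=\mathbf{\Lambda}^\bullet_{R_V}\bigl(\Omega^1_{\mathrm{com}}(R_V)[-1]\bigr)\;\longrightarrow\; B,
$$
by the universal property of the graded symmetric algebra. Composing all the bijections, by Yoneda, yields the first claimed isomorphism. The second identity is proved by the same sequence of steps, replacing $\Omega^1_{\mathrm{nc}}R$ by $\mathbb D\mathrm{er}\;R$ and invoking Proposition \ref{prop:VdBonformsandfields}(2) to translate double derivations $R\to B\otimes\mathrm{End}\;V$ (for the inner bimodule structure) into ordinary derivations $R_V\to B$ of degree $-1$, which assemble into a map out of $\mathbf{\Lambda}^\bullet(\mathrm{Der}\;R_V[1])$.

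The main obstacle I expect is the passage between bimodule derivations into $B\otimes\mathrm{End}\;V$ and module derivations into $B$: one has to verify that the correspondence supplied by Proposition \ref{prop:VdBonformsandfields} is natural not only on objects but also on the universal mapping-out property, respects differentials, and is carried correctly through the degree shift $[-1]$ (resp.\ $[1]$) with the appropriate Koszul signs. Once this naturality is pinned down, the rest of the argument is formal.
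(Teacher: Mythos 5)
Your argument is correct and is essentially the one the paper leaves implicit: the statement is quoted from Van den Bergh as an immediate consequence of Proposition \ref{prop:VdBonformsandfields}, and the mechanism is exactly the representability/adjunction $\mathrm{Hom}_{R^e}(M,\,B\otimes\mathrm{End}\;V)\cong\mathrm{Hom}_{R_V}(M_V,\,B)$ that you invoke, which yields $(T_R M)_V=\mathbf\Lambda^\bullet_{R_V}(M_V)$ for any $R$-bimodule $M$ and hence both identities. The only phrasing to tighten is in the second identity: what gets translated is not ``double derivations into ordinary derivations $R_V\to B$ of degree $-1$'' but rather $R$-bimodule maps out of $\mathbb D\mathrm{er}\;R[1]$ into $R_V$-module maps out of $\mathrm{Der}\;R_V[1]$, again by the same adjunction combined with Proposition \ref{prop:VdBonformsandfields}(2).
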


Now applying \eqref{tracemap} to
\eqref{identityofformsandvectors},
we have the trace maps
\begin{equation}\label{tracemapofquotientspaces}
(T_R(\Omega^1_{\mathrm{nc}}R[-1]))_{\natural}=\mathrm{DR}^\bullet_{\mathrm{nc}}R\to
\Omega^\bullet_{\mathrm{com}}(R_V)^{\mathrm{GL}}
\end{equation}
and
$$
(T_R(\mathbb D\mathrm{er}\;R[1]))_{\natural}\to
\mathbf\Lambda^\bullet (\mathrm{Der}\;R_V[1])^{\mathrm{GL}}.
$$
When restricting to the first component, we have
the trace map
\begin{equation}\label{tracemapofquotientspaces1}
\mathrm{DR}^1_{\mathrm{nc}}R
\to \Omega^1_{\mathrm{com}}(R_V)^{\mathrm{GL}}
\quad\mbox{and}\quad
(\mathbb D\mathrm{er}\; R[1])_{\natural}\to
(\mathrm{Der}\;R_V[1])^{\mathrm{GL}}.
\end{equation}





\subsection{The shifted symplectic structure}

The notion of shifted symplectic structure is introduced
by Pantev-To\"en-Vaqui\'e-Vezzosi in \cite{PTVV}; see also
\cite{CPTVV,Melani,Pridham} for some further studies.
In the following we only consider the affine case, which is enough for our purpose.

Suppose $(R,\partial)$ is a DG algebra. For any closed form $\omega\in \Omega^2_{\mathrm{com}}R$ of 
total degree $2-n$, 
the {\it contraction map}
$$
\iota_{(-)}\omega: \mathrm{Der}\; R [1]\to (\Omega^1_{\mathrm{com}} R[-1])[2-n],\;
\alpha\mapsto\omega(\alpha, -)[2-n]
$$
is a map of $\partial$-complexes.
The following is a slightly stronger version of the shifted symplectic structure
introduced in \cite{PTVV}.

\begin{definition}[Shifted symplectic structure]
Suppose $(R,\partial)$ is a DG commutative algebra over $k$.
An {\it $n$-shifted symplectic structure} on $R$ is a
2-form $\omega\in\Omega^2_{\mathrm{com}}R$
of total degree $2-n$, closed under $\partial$ and $d$,
such that the contraction
\begin{equation}\label{mapfromnctangent}
\iota_{(-)}\omega: \mathrm{Der}\;R [1]\to(\Omega_{\mathrm{com}}^1 R[-1])[2-n],
\end{equation}
is a quasi-isomorphism.
\end{definition}

\begin{remark}\label{rmk:degreeshifting}
The original definition of shifted symplectic structure in
\cite{PTVV} requires $\omega$ to be $\partial$-closed, which can
be extended to be a closed form in the negative cyclic complex associated to the
mixed complex $\Omega_{\mathrm{com}}^\bullet(R)$.

In both \eqref{isofrombisymplectic} and \eqref{mapfromnctangent}
we have shifted the degrees on the right hand side of the equations, namely on
the (noncommutative)
differential 1-forms, up by $n-2$, which looks different from \cite{PTVV}.
However, they are the same in the following sense:
\eqref{isofrombisymplectic} and \eqref{mapfromnctangent}
can be alternatively written as
$$
\mathbb D\mathrm{er}\; R\to\Omega^1_{\mathrm{nc}} R[-n]\quad
\mbox{and}\quad
\mathrm{Der}\; R\to\Omega^1_{\mathrm{com}} R[-n]
$$
respectively, which coincides with \cite{PTVV}.
Such a degree shifting guarantees that an $n$-shifted symplectic structure
gives an $n$-shifted Poisson structure, whose Poisson bracket has degree $n$.
\end{remark}

\begin{theorem}[\cite{CBEG}; see also \cite{VdB2} \S2.4]
Suppose $R$ is a DG algebra which admits an $n$-shifted bi-symplectic structure.
Then $R_V$ has an $n$-shifted symplectic structure.
\end{theorem}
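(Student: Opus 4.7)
The plan is to transport $\omega$ to a 2-form on $\mathrm{Rep}_V(R)$ via the trace map of \eqref{tracemapofquotientspaces}, and then verify the two required properties of a shifted symplectic form using Van den Bergh's functor $(-)_V$ as the bridge between the bi-symplectic and symplectic worlds. Concretely, set
\[
\omega_V \;:=\; \mathrm{Tr}(\omega) \;\in\; \Omega^2_{\mathrm{com}}(R_V)^{\mathrm{GL}},
\]
which has the same total degree $2-n$ as $\omega$.

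First I would verify that $\omega_V$ is closed with respect to both $d$ and $\partial$. This follows because the trace map \eqref{tracemapofquotientspaces} is a morphism of mixed complexes: $\mathrm{Tr}$ is built from $\pi_V$ (a map of DG algebras, so commuting with $\partial$) together with the matrix trace (which commutes with $d$ on the commutative side as it commutes with universal derivations). Since $\omega$ is $d$-closed and $\partial$-closed in $\mathrm{DR}^2_{\mathrm{nc}} R$, $\omega_V$ inherits both properties in $\Omega^2_{\mathrm{com}}(R_V)$.

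Second, and this is the heart of the proof, I would show that the contraction
\[
\iota_{(-)}\omega_V : \mathrm{Der}(R_V)[1] \longrightarrow (\Omega^1_{\mathrm{com}}(R_V)[-1])[2-n]
\]
is a quasi-isomorphism. The idea is to apply Van den Bergh's functor $(-)_V$ to the bi-symplectic quasi-isomorphism \eqref{isofrombisymplectic}. By Proposition \ref{prop:VdBonformsandfields}, the source becomes $\mathrm{Der}(R_V)[1]$ and the target becomes $(\Omega^1_{\mathrm{com}}(R_V)[-1])[2-n]$. What needs to be checked is the compatibility
\[
(\iota_\Theta \omega)_V \;=\; \iota_{\Theta_V}\,\omega_V
\qquad \text{for all } \Theta \in \mathbb{D}\mathrm{er}\, R[1],
\]
i.e.\ that the contraction pairing is natural under $(-)_V$. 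This is a matrix-coordinate calculation: if $\Theta$ acts on a generator $x$ as $\Theta(x) = \sum \Theta'(x)\otimes\Theta''(x)$, and one writes $\omega$ out in terms of $dx_\alpha \otimes dx_\beta$, then both sides equal the same expression after entering the matrix components $x_{ij}$ and applying the trace. Once this identity is in hand, the map induced by $\iota_{(-)}\omega_V$ is exactly $(\iota_{(-)}\omega)_V$, so it inherits the quasi-isomorphism property provided $(-)_V$ preserves the quasi-isomorphism in question.

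The main obstacle is precisely this last point: the functor $M \mapsto M_V = M \otimes_{R^e}(R_V\otimes\mathrm{End}\,V)$ is right exact, so a priori it only preserves quasi-isomorphisms between bimodules that are sufficiently projective (or flat) over $R^e$. In the applications considered in this paper, $R = \tilde{R} = \Omega(\tilde{A}^{\ac})$ is quasi-free, and both $\Omega^1_{\mathrm{nc}} R$ and $\mathbb{D}\mathrm{er}\, R$ are finitely generated projective $R^e$-modules; hence $(-)_V$ is exact on these bimodules and preserves the bi-symplectic quasi-isomorphism verbatim. In the general setting one can either add a quasi-freeness hypothesis or replace the bimodules by projective resolutions; either way, the resulting map $\iota_{(-)}\omega_V$ is a quasi-isomorphism, completing the proof that $\omega_V$ is an $n$-shifted symplectic structure on $\mathrm{Rep}_V(R)$.
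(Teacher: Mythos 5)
Your proposal follows essentially the same route as the paper's proof, which simply invokes Proposition \ref{prop:VdBonformsandfields}, the functoriality of Van den Bergh's functor, and the commutativity of the diagram \eqref{diag:bisymplectictosymplectic} identifying $\iota_{(-)}\mathrm{Tr}(\omega)$ with the image of $\iota_{(-)}\omega$ under $(-)_V$, delegating the details to \cite{CBEG}. Your additional observation that $(-)_V$ is only right exact, so that preserving the quasi-isomorphism \eqref{isofrombisymplectic} requires the relevant bimodules to be projective (as they are for quasi-free $R$ such as $\tilde R=\Omega(\tilde A^{\ac})$), is a legitimate point that the paper leaves implicit, and you handle it correctly.
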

\begin{proof}
Follows from Proposition \ref{prop:VdBonformsandfields} and the functoriality of Van den Bergh's functor.
More precisely, applying Van den Bergh's functor to 
$$
\xymatrix{
\mathbb D\mathrm{er}\; R[1]\ar[r]^-{\iota_{(-)}\omega}_-{\cong}&
(\Omega^1_{\mathrm{nc}} R[-1])[n]
}
$$
and then using Proposition  \ref{prop:VdBonformsandfields}
we obtain 
\begin{equation}\label{diag:bisymplectictosymplectic}
\xymatrix{
\mathrm{Der}(R_V)[1]\ar[r]^-{\cong}&
(\Omega^1_{\mathrm{com}}(R_V)[-1])[n],
}
\end{equation}
where the isomorphism, according to \cite[Theorem 6.4.3]{CBEG},
is given by ${\iota_{(-)}\mathrm{Tr}(\omega)}$.
\end{proof}

Combining the above theorem with Theorem \ref{thm:mainthm1},
we immediately have the following:

\begin{corollary}
Suppose $A$ is a Koszul Calabi-Yau algebra of dimension $n$,
and let $\tilde R=\Omega(\tilde A^{\ac})$ as before. Then
$\tilde R_V$ has an $(2-n)$-shifted symplectic structure.
\end{corollary}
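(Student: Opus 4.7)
The statement follows by combining Theorem \ref{thm:mainthm1} with the immediately preceding theorem (attributed to Crawley-Boevey--Etingof--Ginzburg and Van den Bergh), which transports a shifted bi-symplectic structure on a DG algebra to a shifted symplectic structure on its representation scheme. The plan is simply to assemble these two ingredients and check that the construction behaves well under the trace map.

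First, I would invoke Theorem \ref{thm:mainthm1} to produce the canonical $(2-n)$-shifted bi-symplectic 2-form $\omega \in \mathrm{DR}^2_{\mathrm{nc}} \tilde R$ of degree $n$, constructed from the coproduct $\Delta(\eta)$ of a nonzero top element $\eta \in A^{\ac}_n$ (the noncommutative volume form). By Lemma \ref{volumeformasdRcycle}, $\omega$ is closed with respect to both $d$ and $\partial$, and by the proof of Theorem \ref{thm:mainthm1} the contraction $\iota_{(-)}\omega$ gives the quasi-isomorphism of $\tilde R$-bimodules between $\mathbb{D}\mathrm{er}\,\tilde R[1]$ and $(\Omega^1_{\mathrm{nc}}\tilde R[-1])[2-n]$.

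Next, I would apply the trace map \eqref{tracemapofquotientspaces} to push $\omega$ forward to a class
\[
\mathrm{Tr}(\omega) \in \Omega^2_{\mathrm{com}}(\mathrm{Rep}_V(\tilde R))^{\mathrm{GL}}
\]
of total degree $2-n$. Since the trace map is a morphism of mixed complexes (it intertwines both $d$ and the internal differential $\partial$ coming from the cobar construction), closedness of $\omega$ in $\mathrm{DR}^\bullet_{\mathrm{nc}}\tilde R$ passes immediately to closedness of $\mathrm{Tr}(\omega)$ in $\Omega^\bullet_{\mathrm{com}}(\mathrm{Rep}_V(\tilde R))^{\mathrm{GL}}$.

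The only nontrivial step is the non-degeneracy, i.e.\ verifying that
\[
\iota_{(-)}\mathrm{Tr}(\omega)\colon \mathrm{Der}(\mathrm{Rep}_V(\tilde R))[1] \longrightarrow (\Omega^1_{\mathrm{com}}(\mathrm{Rep}_V(\tilde R))[-1])[2-n]
\]
is a quasi-isomorphism. For this I would use Proposition \ref{prop:VdBonformsandfields}, which identifies the Van den Bergh functor $(-)_V$ applied to $\Omega^1_{\mathrm{nc}}\tilde R$ and $\mathbb{D}\mathrm{er}\,\tilde R$ with $\Omega^1_{\mathrm{com}}(\tilde R_V)$ and $\mathrm{Der}\,\tilde R_V$ respectively. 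Since $(-)_V$ is exact on DG bimodules for quasi-free $\tilde R$ and is natural, the bi-symplectic isomorphism $\iota_{(-)}\omega$ of $\tilde R$-bimodules descends under $(-)_V$ to a quasi-isomorphism of $\tilde R_V$-modules that fits into the commutative square \eqref{diag:bisymplectictosymplectic}. Hence $\mathrm{Tr}(\omega)$ is indeed non-degenerate. The expected main obstacle would be the careful bookkeeping of degree shifts and of compatibility with the $\mathrm{GL}(V)$-action when passing from the bimodule quasi-isomorphism to a quasi-isomorphism of invariants, but both issues are already handled in the cited CBEG argument and by the functoriality of $(-)_V$, so no new computation is required.
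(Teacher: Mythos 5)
Your proposal is correct and follows essentially the same route as the paper: the paper's proof simply combines Theorem \ref{thm:mainthm1} with the immediately preceding transport theorem of Crawley-Boevey--Etingof--Ginzburg and Van den Bergh, whose proof in turn rests on Proposition \ref{prop:VdBonformsandfields}, the functoriality of the functor $(-)_V$, and the commutative square \eqref{diag:bisymplectictosymplectic}, exactly as you describe. You merely unpack the details (closedness of $\mathrm{Tr}(\omega)$ and non-degeneracy via the Van den Bergh functor) that the paper delegates to the cited theorem.
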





\subsection{Identification of $\mathrm{GL}$-invariant 1-forms and vectors}

In this subsection, we show that $\mathrm{Rep}_V(\tilde R)^{\mathrm{GL}}$ is 
``symplectic".  What we mean is the following:

\begin{theorem}\label{conj:Lam}
Let $A$ be a Koszul Calabi-Yau algebra of dimension $n$.
Then
$$
\mathrm{Der}(\tilde R_V)^{\mathrm{GL}}[1]
\cong
(\Omega^1_{\mathrm{com}}(\tilde R_V)^{\mathrm{GL}}[-1])[n].
$$
\end{theorem}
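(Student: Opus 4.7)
The plan is to apply Van den Bergh's functor to the bi-symplectic isomorphism of Theorem \ref{thm:mainthm1} and then descend to $\mathrm{GL}(V)$-invariants using reductivity of $\mathrm{GL}(V)$ in characteristic zero.

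First, Theorem \ref{thm:mainthm1} furnishes the contraction map
\[
\iota_{(-)}\omega : \mathbb{D}\mathrm{er}\;\tilde R[1] \longrightarrow (\Omega^1_{\mathrm{nc}}\tilde R[-1])[n],
\]
and, thanks to quasi-freeness of $\tilde R = \Omega(\tilde A^{\ac})$, both sides are free $\tilde R$-bimodules of the respective shapes $\tilde R \otimes A^! \otimes \tilde R$ and $\tilde R \otimes A^{\ac} \otimes \tilde R$. The map is induced by the non-degenerate Koszul pairing $A^! \cong A^{\ac}[n]$ coming from the Calabi-Yau volume class via Proposition \ref{dualitybetweenCYcyclic}, so it is in fact an honest isomorphism of DG $\tilde R$-bimodules. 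Applying Van den Bergh's functor $(-)_V$ and invoking Proposition \ref{prop:VdBonformsandfields} to identify $(\mathbb{D}\mathrm{er}\;\tilde R)_V = \mathrm{Der}(\mathrm{Rep}_V(\tilde R))$ and $(\Omega^1_{\mathrm{nc}}\tilde R)_V = \Omega^1_{\mathrm{com}}(\mathrm{Rep}_V(\tilde R))$, this transports to an isomorphism of DG $\mathrm{Rep}_V(\tilde R)$-modules
\[
\iota_{(-)}\mathrm{Tr}(\omega) : \mathrm{Der}(\mathrm{Rep}_V(\tilde R))[1] \longrightarrow (\Omega^1_{\mathrm{com}}(\mathrm{Rep}_V(\tilde R))[-1])[n],
\]
which is the shifted symplectic contraction appearing in diagram \eqref{diag:bisymplectictosymplectic}. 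Since the $\mathrm{GL}(V)$-action on $\mathrm{Rep}_V(\tilde R)$ is by conjugation and $\mathrm{Tr}(\omega)$ belongs to $\Omega^2_{\mathrm{com}}(\mathrm{Rep}_V(\tilde R))^{\mathrm{GL}}$ by construction of the trace \eqref{tracemapofquotientspaces}, this map is $\mathrm{GL}(V)$-equivariant.

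To conclude, $\mathrm{GL}(V)$ is reductive and $k$ has characteristic zero, so $(-)^{\mathrm{GL}}$ is an exact functor on rational $\mathrm{GL}(V)$-representations; restricting the equivariant isomorphism above to invariants then yields the theorem.

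The main point requiring care is ensuring that $(-)_V$, being ultimately a tensor product, preserves the bi-symplectic isomorphism rather than merely a quasi-isomorphism. This is precisely where the quasi-freeness of $\tilde R$ is essential: both bimodules involved are free of the form $\tilde R \otimes U \otimes \tilde R$, and Van den Bergh's functor acts simply by replacing $U$ with its matrix-coordinate copy, so the Koszul-pairing identification survives intact. In the non-free setting one would have to work derivedly, but in the present context this issue is sidestepped entirely.
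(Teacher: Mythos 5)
Your proposal is correct and follows essentially the same route as the paper: the paper's own proof simply observes that $\mathrm{Tr}(\omega)$ lies in $\Omega^2_{\mathrm{com}}(\mathrm{Rep}_V(\tilde R))^{\mathrm{GL}}$ and restricts the contraction isomorphism of diagram \eqref{diag:bisymplectictosymplectic} to $\mathrm{GL}(V)$-invariants. Your additional care about $(-)_V$ preserving an honest isomorphism (rather than merely a quasi-isomorphism) of free bimodules is a worthwhile point the paper leaves implicit, and since the equivariant map is an actual isomorphism, restriction to invariants goes through without even needing reductivity.
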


\begin{proof}
Observe that in \eqref{diag:bisymplectictosymplectic},
$\mathrm{Tr}(\omega)$ in fact lies in $\Omega^2_{\mathrm{com}}(\tilde R_V)^{\mathrm{GL}}$ (see \eqref{tracemapofquotientspaces}).
Thus by taking the $\mathrm{GL}(V)$-invariant vector fields and 1-forms, 
we get the desired isomorphism.
\end{proof}

Combining this proposition with Propositions \ref{Prop:cotangentcomplex}
and \ref{Prop:tangentcomplex},
we obtain that the trace map \eqref{tracemapofquotientspaces1}
gives the following commutative diagram of chain complexes
\begin{equation}\label{diag:tracefromHochschildcomplex}
\xymatrixcolsep{4pc}
\xymatrix{
\mathrm{CH}^\bullet(A)\ar[r]^-{\mathrm{Tr}}\ar[d]^{\cong}
&\mathrm{Der}(\tilde R_V)^{\mathrm{GL}}[1]\ar[d]^{\cong}\\
\mathrm{CH}_{n-\bullet}(A)\ar[r]^-{\mathrm{Tr}}
&(\Omega^1_{\mathrm{com}}(\tilde R_V)^{\mathrm{GL}}[-1])[n].
}
\end{equation}
Taking the (co)homology on both sides, we get the 
the commutative diagram \eqref{digram:PDandshiftedsymplectic}
stated in \S\ref{sect:intro}.
As we remarked before, the vertical maps are both isomorphisms, while
the two horizontal maps are neither surjective nor injective
in general
(for see \cite{BR} more details).





\section{The shifted double Poisson structure}\label{sect:shiftedPoisson}

Shifted bi-symplectic structures
are intimately related to shifted double Poisson structures.
Let us remind the work \cite{VdB} of Van den Bergh (here we
rephrase it in the DG case; see also \cite{BCER}).

\begin{definition}[Double bracket]
Suppose $R$ is a DG algebra over $k$.
A \textit{double bracket} of degree $n$ on $R$ is a DG map
$\ldb-,-\rdb: R\times R\to R\otimes R$
of degree $n$ which is a derivation in its second argument and
satisfies
\begin{eqnarray}
 \ldb a,b\rdb&= &-(-1)^{(|a|+n)(|b|+n)}\ldb b,a\rdb^\circ, 
 \end{eqnarray}
where $(u\otimes v)^\circ =(-1)^{|u||v|}v\otimes u$.
\end{definition}

\begin{definition}[Double Poisson structure]\label{Def:DPs}
Suppose that $\ldb-,-\rdb$ is a double bracket of degree $n$ on $R$.
For $a,b_1,...,b_n\in R$, let
$$\ldb a, b_1 \otimes \cdots \otimes b_n\rdb_L \,:=\, \ldb a,b_1 \rdb \otimes b_2 \otimes \cdots \otimes b_n,$$
and for $s$ is a permutation of $\{1,2,\cdots, n\}$, let
$$\sigma_s(b_1 \otimes \cdots \otimes b_n):=(-1)^{\sigma(s)}b_{s^{-1}(1)} \otimes \cdots \otimes b_{s^{-1}(n)},$$
where $$\sigma(s)=\displaystyle\sum_{i<j; s^{-1}(j)<s^{-1}(i)}|a_{s^{-1}(i)}||a_{s^{-1}(j)}|.$$
If furthermore $R$ satisfies the following
\textit{double Jacobi identity}
\begin{multline}\label{dJ}
\ldbg a , \ldb b,c \rdb \rdbg_L +(-1)^{(|a|+n)(|b|+|c|)} \sigma_{(123)}\ldbg b,\ldb c,a\rdb \rdbg_L\\
 + (-1)^{(|c|+n)(|a|+|b|)}\sigma_{(132)}
\ldbg c,\ldb a,b\rdb \rdbg_L =0,
\end{multline}
then $R$ is called a {\it double Poisson algebra}
of degree $n$ (or \textit{$n$-shifted double Poisson algebra}).
\end{definition}



Van den Bergh \cite{VdB}  showed that,
if $R$ is equipped with a double Poisson structure,
then there is a Poisson structure on the affine scheme $\mathrm{Rep}_V(R)$
of all the representations of 
$R$ in $V$.
%
Independently and simultaneously, Crawley-Boevey gave in \cite{CB} the {\it weakest} condition for 
$\mathrm{Rep}_V(R)/\!/{\mathrm{GL}(V)}$ of $R$ to have a Poisson structure. 
He called such condition the {\it $\mathrm H_0$-Poisson structure}, since
it involves the
zeroth Hochschild/cyclic homology of $R$.
It turns out Van den Bergh's
double Poisson structure satisfies this condition, and is so far the most interesting example
therein. 


\subsection{From shifted bi-symplectic to shifted double Poisson}

In \cite[Appendix]{VdB} Van den Bergh showed that a bi-symplectic
structure gives a double Poisson structure.
We rephrase it in the Koszul Calabi-Yau case.
Suppose $A$ is a Koszul $n$-Calabi-Yau algebra,
and let $\tilde R=\Omega(\tilde A^{\ac})$.
For any
$r\in\tilde R$,
since
$$
\iota_{(-)}\omega: \mathbb D\mathrm{er}\;\tilde R[1]\to(\Omega^1_{\mathrm{nc}}\tilde R[-1])[n]
$$
is an {\it isomorphism} of chain complexes,  
there exists an element $H_r\in\mathbb D\mathrm{er}\;\tilde R$
such that
$$
\iota_{H_r}\omega=d(r).
$$
$H_r$ is called the {\it bi-Hamiltonian vector field} associated to $r$.
Now 
consider the following bracket
$$
\ldb-,-\rdb :  \tilde R\times \tilde R \to  \tilde R\otimes\tilde R,\;
 (r_1,r_2) \mapsto H_{r_1}(r_2),
$$
then we have:

\begin{proposition}\label{prop:shiftedbPontildeR}
$\ldb-,-\rdb $ gives a $(2-n)$-shifted double Poisson structure on $\tilde R$.
\end{proposition}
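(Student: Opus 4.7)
The plan is to adapt Van den Bergh's argument from the appendix of \cite{VdB} to the shifted DG setting, using the $(2-n)$-shifted bi-symplectic structure $\omega$ on $\tilde R$ produced in Theorem \ref{thm:mainthm1}. The proof of that theorem actually shows that $\iota_{(-)}\omega$ is an honest isomorphism of $\tilde R$-bimodules (not merely a quasi-isomorphism, thanks to the explicit description in Koszul terms), so for every $r\in\tilde R$ there is a unique $H_r\in\mathbb D\mathrm{er}\,\tilde R$ with $\iota_{H_r}\omega=d(r)$, and one can unambiguously set $\ldb r_1,r_2\rdb:=H_{r_1}(r_2)$.

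First I would verify that $\ldb -,-\rdb$ is a double bracket of degree $2-n$. That $\ldb r_1,-\rdb$ is a derivation in the second slot is immediate since $H_{r_1}$ is by construction a double derivation. The degree count is routine bookkeeping: because $\omega$ has total degree $2-n$ and $d(r)$ has degree $|r|+1$, the degree of $H_r$ is forced so that $|\ldb r_1,r_2\rdb|=|r_1|+|r_2|+(2-n)$. The cyclic antisymmetry
\[
\ldb a,b\rdb=-(-1)^{(|a|+2-n)(|b|+2-n)}\ldb b,a\rdb^\circ
\]
is the noncommutative analogue of the antisymmetry of a Poisson bracket coming from a symplectic form; one derives it from the graded antisymmetry of the two-form $\omega$, rewritten via $\iota_{H_r}\omega=d(r)$ as $\iota_{H_a}\iota_{H_b}\omega=\pm(\iota_{H_b}\iota_{H_a}\omega)^\circ$, together with $\iota_{H_a}d(b)=H_a(b)$. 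Compatibility with the internal differential $\partial$ on $\tilde R$ holds because $\omega$ is $\partial$-closed and $\partial$ anticommutes with $d$, which forces $H_{\partial r}=\pm\partial H_r$ and hence makes the bracket a DG map.

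The substantive step is the double Jacobi identity \eqref{dJ}. As in the classical passage from symplectic to Poisson, this is where closedness $d\omega=0$ in the Karoubi-de Rham complex enters. Using the Cartan identity of Lemma \ref{Cartanformula},
\[
d\circ\iota_{H_a}+\iota_{H_a}\circ d=\mathscr L_{H_a},
\]
one evaluates the vanishing expression $\iota_{H_c}\iota_{H_b}\iota_{H_a}(d\omega)$ in two ways, and after repeated use of $\iota_{H_r}\omega=d(r)$ one extracts a triple-bracket identity; the cyclic permutations $\sigma_{(123)}$ and $\sigma_{(132)}$ appearing in \eqref{dJ} arise from the three-fold symmetry of this pairing against $H_a,H_b,H_c$. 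Van den Bergh's appendix in \cite{VdB} carries out precisely this computation in the ungraded, unshifted case, and the argument transfers essentially verbatim to our setting once Koszul signs are installed.

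The expected main obstacle is purely combinatorial: confirming that the Koszul signs produced by the permutations of $a,b,c$ in the triple-contraction computation match those in \eqref{dJ} exactly. No genuinely new ingredient is needed, since all the structural inputs—the Cartan identity, the derivation property of $H_r$, closedness and $\partial$-closedness of $\omega$, and the bimodule isomorphism $\iota_{(-)}\omega$—carry over directly from Van den Bergh's setting; the $(2-n)$-shift contributes only a uniform twist in the grading, and the internal $\partial$ is handled automatically by the mixed-complex structure on $\mathrm{DR}^\bullet_{\mathrm{nc}}\tilde R$.
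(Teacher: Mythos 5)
Your argument is sound in outline, but it is not the route the paper takes. You run Van den Bergh's abstract ``bi-symplectic implies double Poisson'' argument in the graded setting: non-degeneracy of $\iota_{(-)}\omega$ gives the Hamiltonian fields $H_r$, the Cartan identity together with $d$-closedness of $\omega$ gives the double Jacobi identity, and $\partial$-closedness gives compatibility with the internal differential. The paper instead exploits that $\tilde R=T(A^{\ac}[1])$ is free: it invokes \cite[Lemma A.3.3]{VdB} to write the bracket in closed form on words in the generators, namely \eqref{def:doublePoissonbracket}, and then observes that the verification of the double Jacobi identity and of $\partial$-compatibility for exactly this formula was already carried out in \cite[Theorem 15]{BCER} for $R=\Omega(A^{\ac})$; the only new feature of $\tilde R$ is the extra generator coming from the co-unit of $A^{\ac}$, and the only input needed is the cyclicity condition \eqref{cyclicconditionforcoalg}. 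Your route is more self-contained conceptually, but it requires redoing Van den Bergh's sign-heavy appendix computations with Koszul signs installed --- precisely the part you defer as ``purely combinatorial'' --- whereas the paper's route produces the explicit formula (which is needed later for the representation schemes and the quantization) essentially for free and isolates cyclicity as the single condition to check. Two points to watch if you carry out your version: for a $(2-n)$-shifted bi-symplectic structure the form $\omega$ has total degree $n$, not $2-n$ (the bracket still ends up of degree $2-n$, as you state); and the skew-symmetry identity must be phrased with care about reduced versus unreduced contractions, since $\ldb a,b\rdb$ lives in $\tilde R\otimes\tilde R$ while $\iota_{H_a}\iota_{H_b}\omega$, with both contractions reduced, lives in $\tilde R$.
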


\begin{proof}
By Van den Bergh \cite[Lemma A.3.3]{VdB},
the $(2-n)$-shifted double Poisson bracket 
on $\tilde R=\Omega(\tilde A^{\ac})$ is given by the following
formula:
\begin{equation}\label{def:doublePoissonbracket}
\ldb x, y\rdb:=\sum_{i=1}^k\sum_{j=1}^\ell
(-1)^{\sigma_{ij}}
(x_i, y_j)\cdot
( y_1\cdots y_{j-1}  x_{i+1}\cdots  x_k)
\otimes
( x_1\cdots  x_{i-1} y_{j+1}\cdots  y_{\ell}),
\end{equation}
for $x=( x_1\cdots  x_k)$ and $y=( y_1\cdots  y_{\ell})$ in $\tilde R$,
where $(-1)^{\sigma_{ij}}$ is the Koszul sign.
Here $(x_i, y_j)\in k$ for $x_i, y_j\in A^{\ac}[1]$ is the 
graded skew-symmetric pairing induced from the pairing on $A^{\ac}$;
more precisely, $(x_i, y_j)=(-1)^{n-|x_i|}\langle\Sigma x_i, \Sigma y_j\rangle$.

Formula \eqref{def:doublePoissonbracket}
also appeared in \cite[Theorem 15]{BCER}
(see also \cite[Lemma 4.4]{CEEY}) for $R=\Omega(A^{\ac})$,
where we also showed that
$\ldb-,-\rdb$ given above
commutes with the differential on $R$.
The only difference between $\tilde R$ and $R$ is that
the generators of $\tilde R$ contain one more element, namely
the desuspension of the co-unit of $A^{\ac}$.
The sufficient condition for $\ldb-,-\rdb$ commuting with the differentials on $R$ and on $\tilde R$
is the cyclic condition \eqref{cyclicconditionforcoalg} for $A^{\ac}$.
Thus the proof of \cite[Theorem 15]{BCER}
 applies to the above proposition, too.
\end{proof}




Suppose $R$ has a double Poisson structure,
then Van den Bergh gave an explicit formula
for the Poisson structure on $\mathrm{Rep}_V(R)$ (see \cite[Propositions 7.5.1 and 7.5.2]{VdB}).
For a Koszul Calabi-Yau
algebra $A$, the Poisson structure on $\mathrm{Rep}_n(\tilde R)$ is given as follows:
suppose $A^{\ac}$ has a set of basis $\{x^\alpha\}_{\alpha\in I}$, and
$\Delta(x^{\alpha})=\sum_{(x^{\alpha})} x^{\alpha_1}\otimes x^{\alpha_2}$.
Then $\mathrm{Rep}_n(\tilde R)$ is the quasi-free DG commutative algebra generated by
\begin{equation}\label{RepofcobarofKoszulCY}
\big\{x_{ij}^{\alpha}\;\big|\;\alpha\in I, 1\le i,j\le n, |x_{ij}^{\alpha}|=|x^{\alpha}|-1\big\},
\end{equation}
with
$$
d(x_{ij}^{\alpha})=\sum_{(x^{\alpha})}(-1)^{|x^{\alpha_1}|}\sum_{k=1}^{n}x_{ik}^{\alpha_1}\cdot x_{kj}^{\alpha_2}.
$$
The $(2-n)$-Poisson bracket is given by
\begin{equation}\label{PoissononDRep}
\{x_{ij}^{\alpha}, x_{k\ell}^\beta\}=(-1)^{n-|x^{\alpha}|}\delta_{i\ell}\delta_{jk}\langle x^\alpha, x^\beta\rangle
\end{equation}
on the generators, which extends to the whole $\mathrm{Rep}_n(\tilde R)$ by the Leibniz rule.

\subsection{The work of Crawley-Boevey}

In \cite{CB} Crawley-Boevey introduced what he called the {\it $\mathrm{H}_0$-Poisson structure}.
Let us recall its definition:

\begin{definition}[$\mathrm H_0$-Poisson structure]
Suppose $R$ is an associative algebra. An {\it $\mathrm H_0$-Poisson structure}
on $R$ is a Lie bracket
$$
\{-,-\}: R_{\natural}\times R_{\natural}\to R_{\natural}
$$
such that the adjoint action
$$
ad_{\bar u}: R_\natural\to R_\natural, \; \bar v\mapsto \{\bar u,\bar v\}
$$
can be lifted to be a derivation
$$
d_{u}: R\to R,
\quad\mbox{for all}\; u\in R.$$
\end{definition}

Crawley-Boevey proved that if $R$ admits an $\mathrm H_0$-Poisson structure, 
then there is a unique Poisson structure on $\mathrm{Rep}_V(R)^{\mathrm{GL}}$
such that the trace map
$$
\mathrm{Tr}: R_{\natural}\to \mathrm{Rep}_V(R)^{\mathrm{GL}}
$$
is a map of Lie algebras, or in other words,
$
\mathrm{Tr}: \mathbf\Lambda^\bullet R_{\natural}\to \mathrm{Rep}_V(R)^{\mathrm{GL}}
$
is a map of Poisson algebras. Here the Poisson bracket on $\mathbf\Lambda^\bullet R_{\natural}$
is the extension of the bracket on $R_{\natural}$ by derivation.

Now suppose $R$ has a double Poisson bracket $\ldb-,-\rdb$.
Then
$$\{-,-\}: R\times R\to R,\; (u,v)\mapsto\mu \circ \ldb u,v\rdb$$
descends to a well-defined Lie bracket (see \cite[Lemma 2.4.1]{VdB})
\begin{equation}\label{fromdbtoLie}
\{-,-\}: R_\natural\times R_\natural\to R_{\natural},
\end{equation}
which is an $\mathrm H_0$-Poisson structure.
The restriction of the Poisson structure on $\mathrm{Rep}_V(R)$
gives the one on $\mathrm{Rep}_V(R)^{\mathrm{GL}}$.

This result was later generalized to the DG setting in \cite{BCER}.
For the Koszul Calabi-Yau algebra case, we have the following.

\begin{corollary}
Suppose that $A$ is a Koszul Calabi-Yau algebra, and let $\tilde R=\Omega(\tilde A^{\ac})$ be as above.
The trace map
\begin{equation}\label{diag:tracefromcycliccomplex}
\mathrm{Tr}: \tilde R_{\natural}\to\mathrm{Rep}_V(\tilde R)^{\mathrm{GL}}
\end{equation}
is a map of degree $(2-n)$ DG Lie algebras.
As a consequence,
\begin{equation}\label{tracemapofPoissonalgebras}
\mathrm{Tr}: \mathbf\Lambda^\bullet\tilde R_{\natural}\to
\mathrm{Rep}_V(\tilde R)^{\mathrm{GL}}\end{equation}
is a map of $(2-n)$-shifted Poisson algebras.
\end{corollary}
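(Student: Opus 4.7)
The plan is to combine Proposition~\ref{prop:shiftedbPontildeR} (the existence of a $(2-n)$-shifted double Poisson structure on $\tilde R$) with Van den Bergh's and Crawley-Boevey's results about passing from a double Poisson bracket to Lie/Poisson brackets on $R_\natural$ and on $\mathrm{Rep}_V(R)^{\mathrm{GL}}$. Since both the source and the target of $\mathrm{Tr}$ are induced from a common double bracket via well-understood constructions, the corollary is essentially a DG upgrade of a formal statement.

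First I would recall that by Proposition~\ref{prop:shiftedbPontildeR} there is a degree $(2-n)$ double bracket $\ldb-,-\rdb$ on $\tilde R$ which is compatible with the internal differential $\partial$ (this was already checked using the cyclicity of the pairing on $A^{\ac}$). Applying the construction \eqref{fromdbtoLie} of Van den Bergh, the composition $\mu\circ\ldb-,-\rdb:\tilde R\times\tilde R\to\tilde R$ descends to a well-defined graded Lie bracket of degree $(2-n)$ on $\tilde R_{\natural}$, giving $\tilde R_\natural$ the structure of a DG Lie algebra (the Jacobi identity on $\tilde R_\natural$ and the derivation property $ad_{\bar u}:\tilde R_\natural\to\tilde R_\natural$ lifting to a derivation $d_u:\tilde R\to\tilde R$ follow from the double Jacobi identity \eqref{dJ} exactly as in Lemma~2.4.1 of \cite{VdB}). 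This exhibits $\tilde R$ as carrying an $\mathrm H_0$-Poisson structure in the sense of Crawley-Boevey.

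Next I would invoke Van den Bergh's Propositions~7.5.1 and 7.5.2 (cited before \eqref{RepofcobarofKoszulCY}), which produce from $\ldb-,-\rdb$ an explicit Poisson bracket on $\mathrm{Rep}_V(\tilde R)$ of degree $(2-n)$ given by \eqref{PoissononDRep}. Restricting to $\mathrm{GL}(V)$-invariants yields a DG Poisson algebra structure on $\mathrm{Rep}_V(\tilde R)^{\mathrm{GL}}$. The content that the trace map \eqref{diag:tracefromcycliccomplex} is a map of DG Lie algebras is then the classical statement that for any $u,v\in\tilde R$,
\begin{equation*}
\{\mathrm{Tr}(u),\mathrm{Tr}(v)\}_{\mathrm{Rep}_V(\tilde R)}=\mathrm{Tr}(\mu\circ\ldb u,v\rdb),
\end{equation*}
which is proved by a direct calculation on generators using the explicit formula \eqref{PoissononDRep} and the definition \eqref{def:doublePoissonbracket} of $\ldb-,-\rdb$; the matrix trace identity $\mathrm{tr}(AB)=\mathrm{tr}(BA)$ is what converts $\mu\circ\ldb-,-\rdb$ into the commutative Poisson bracket after trace. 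The extension to $\mathbf\Lambda^\bullet\tilde R_\natural$ as a map of $(2-n)$-shifted Poisson algebras follows automatically by declaring the bracket on $\mathbf\Lambda^\bullet\tilde R_\natural$ to be the unique biderivation extension of the Lie bracket on $\tilde R_\natural$, together with the fact that $\mathrm{Tr}:\mathbf\Lambda^\bullet\tilde R_\natural\to\mathrm{Rep}_V(\tilde R)^{\mathrm{GL}}$ is multiplicative.

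The only point requiring care, and the main obstacle in the DG generalization, is compatibility of all brackets with the internal differential $\partial$ and with the de Rham-type grading. Compatibility of $\ldb-,-\rdb$ with $\partial$ on $\tilde R$ was already established in Proposition~\ref{prop:shiftedbPontildeR}; since the Van den Bergh functor $(-)_V$ is compatible with differentials (Proposition~\ref{prop:VdBonformsandfields}), the induced bracket on $\mathrm{Rep}_V(\tilde R)$ is a DG Poisson bracket, and restriction to invariants preserves this. The graded signs in the derivation and skew-symmetry axioms, together with the degree $(2-n)$ shift, all match because they are inherited from the universal formulas of \cite{VdB} as rewritten for the DG setting in \cite{BCER}. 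This completes the chain of implications.
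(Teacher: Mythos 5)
Your argument is correct and follows essentially the same route as the paper, which simply cites \cite[Corollary 3]{BCER} and notes that the proof carries over symbolically to $\tilde R$: you have just unpacked that citation, combining Proposition \ref{prop:shiftedbPontildeR} with Van den Bergh's descent of $\mu\circ\ldb-,-\rdb$ to $\tilde R_\natural$ and his trace identity $\{\mathrm{Tr}(u),\mathrm{Tr}(v)\}=\mathrm{Tr}(\mu\circ\ldb u,v\rdb)$, and checking compatibility with $\partial$. Your closing remark about verifying the bracket directly on $\mathrm{GL}(V)$-invariants via \eqref{PoissononDRep} is precisely the alternative proof the paper sketches after the corollary, so nothing is missing.
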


\begin{proof}
See \cite[Corollary 3]{BCER}.
Again we emphasize that in \cite{BCER} we proved
the statement for $R=\Omega(A^{\ac})$, but the same proof
applies to the current case.
\end{proof}

Alternatively, one can prove the above corollary by 
considering the $\mathrm{GL}(V)$-invariant elements
in $\mathrm{Rep}_V(\tilde R)$ (see \cite[Theorem 3.1]{BR})
and then applying \eqref{PoissononDRep} to them directly.
We leave it to the interested readers.

\subsection{Relations with quivers and quiver representations}\label{subsect:relationswithquivers}

The shifted bi-symplectic and double Poisson structures on $\tilde R$
generalize the ones of quivers given in \cite{CBEG,VdB}.

Let $Q$ be a quiver. For simplicity let us assume
$Q$ has only one vertex. Let $\bar Q$ be the double of $Q$.
Then the path algebra $k\bar Q$
is an associative algebra over $k$;
viewing $\bar Q$ as a 1-dimensional CW complex, 
then $k\bar Q$ is exactly the cobar construction of
the coalgebra of the chain complex of $\bar Q$.

Denote the set of the edges of $Q$ by $\{e_i\}$ and their duals
by $\{e_i^*\}$,
then there is a graded symmetric pairing on $\{e_i\}\cup\{e_i^*\}$ given by
$$
\langle e_i, e_j\rangle=\langle e_i^*,e_j^*\rangle=0,\quad
\langle e_i, e_j^*\rangle=-\langle e_j^*, e_i\rangle=\delta_{ij}
$$
Such pairing is non-degenerate and cyclically invariant, and therefore
by results in previous sections,
the cobar construction of $\bar Q$, that is, $k\bar Q$, has
a $0$-shifted bi-symplectic and double Poisson structure.
The bi-symplectic and double Poisson structures are exactly the ones
obtained in \cite{CBEG,VdB}. The corresponding symplectic
structure on $\mathrm{Rep}_V(k\bar Q)$ as well as the Lie bracket
on $k\bar Q_{\natural}$ was also previously studied by Ginzburg in \cite{Ginzburg01};
see also Bocklandt-Le Bruyn \cite{BLB}, where the Lie algebra on $k\bar Q_{\natural}$ is called
the {\it necklace Lie algebra}.

Now assign the gradings of the edges $\bar Q$ other than one and obtain a DG coalgebra, 
say $C$. 
To obtain the shifted bi-symplectic and double Poisson structures
on $\Omega(C)$ (respectively $\Omega(\tilde C)$, where $\tilde C$
is the co-unitalization of $C$), 
then a sufficient condition is that $\bar C=C\backslash k$ (respectively $C$)
is cyclic, in other words, the dual space $\mathrm{Hom}(\bar C, k)$ (respectively
$\mathrm{Hom}(C, k)$) is a cyclic associative and not necessarily unital algebra.

\section{Quantization}\label{sect:quantization}

In this section, we study the quantization problem. 
In \cite{Schedler}, Schedler proved that the necklace Lie algebra
of a doubled quiver is in fact an involutive Lie bialgebra,
and constructed a Hopf algebra which quantizes this Lie bialgebra.
He also showed that the Hopf algebra is mapped to the
Moyal-Weyl quantization of the quiver representation spaces
as associative algebras.

Later in \cite{GS}, he together with Ginzburg
constructed a Moyal-Weyl type quantization of the necklace Lie bialgebra, 
and showed that
such quantization is isomorphic to the Hopf algebra constructed in \cite{Schedler}.

The purpose of this section is to generalize their results
to the Koszul Calabi-Yau case. 
The main result is the commutative diagram \eqref{Diag:quantization}.
Some partial results have been previously
obtained in \cite{CEG}.

\subsection{Quantization of $\mathrm{Rep}_n(\tilde R)$}

In the bivector form, the shifted
Poisson structure on $\mathrm{Rep}_n(\tilde R)$ is given by
\begin{equation}\label{formula:Poissonstructure}
\pi=\sum_{\alpha,\beta\in I}\sum_{i,j}
(-1)^{n-|x^{\alpha}|}
\langle x^{\alpha},x^{\beta}\rangle\frac{\partial}{\partial x^{\alpha}_{ij}}\wedge
\frac{\partial}{\partial x^{\beta}_{ji}}.
\end{equation}
Observe that $\pi$ is of constant coefficients,
and thus we have the {\it Moyal-Weyl
quantization} of $\mathrm{Rep}_n(\tilde R)$ 
which is
given by
\begin{equation}\label{def:starprodonDRep}
f\star g:=\mu \circ e^{\frac{\hbar}{2}\pi}(f\otimes g)\in \mathrm{Rep}_n(\tilde R)[\hbar],\quad
\mbox{for any}\; f, g\in \mathrm{Rep}_n(\tilde R),
\end{equation}
where ``$\mu$" is the original multiplication on $\mathrm{Rep}_n(\tilde R)$, and $\hbar$ is a formal
parameter of degree $n-2$.

\begin{proposition}
$(\mathrm{Rep}_n(\tilde R)[\hbar],\star)$ is a DG associative algebra over $k[\hbar]$, which quantizes
$\mathrm{Rep}_n(\tilde R)$.
\end{proposition}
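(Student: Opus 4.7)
The plan is to establish three properties in turn: (i) the $\star$-product is associative, (ii) at $\hbar = 0$ we recover the original commutative multiplication so this really is a deformation quantization, and (iii) the differential $d$ on $\mathrm{Rep}_n(\tilde R)$ remains a derivation for $\star$, promoting the associative algebra to a DG one.

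First I would exploit the crucial structural observation that the Poisson bivector $\pi$ written in \eqref{formula:Poissonstructure} has \emph{constant} coefficients: the pairing $\langle x^\alpha, x^\beta\rangle$ is a scalar, and the partial derivatives $\partial/\partial x^\alpha_{ij}$ commute with one another since $\mathrm{Rep}_n(\tilde R)$ is (graded) commutative on the generators \eqref{RepofcobarofKoszulCY}. Associativity of $\star$ then reduces to the statement that the bidifferential operator $\pi$ satisfies $(\pi \otimes \mathrm{id})\circ \pi = (\mathrm{id}\otimes \pi)\circ \pi$ in the appropriate sense; this is the standard Moyal--Weyl computation for constant Poisson bivectors on a (graded) polynomial algebra, which is a formal consequence of the commutativity of the coordinate derivations and the (graded) skew-symmetry of the pairing. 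The same constant-coefficient property shows that $\star$ depends polynomially on $\hbar$ through a convergent exponential power series at the level of each fixed pair of homogeneous elements, since applying $\pi$ strictly lowers the total polynomial degree.

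Second, setting $\hbar = 0$ in \eqref{def:starprodonDRep} yields $f\star g = \mu(f\otimes g)$, which is the original graded commutative product, so $(\mathrm{Rep}_n(\tilde R)[\hbar],\star)$ is indeed a flat deformation of $\mathrm{Rep}_n(\tilde R)$ over $k[\hbar]$, and the first-order term recovers $\tfrac{1}{2}\{f,g\}$, verifying that $\star$ quantizes the Poisson bracket \eqref{PoissononDRep}.

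The main obstacle, as I see it, is verifying the DG property: that $d(f\star g) = d(f)\star g + (-1)^{|f|} f\star d(g)$. Since $d$ is already a derivation of $\mu$, this reduces by induction in the powers of $\hbar$ to showing that $d$ commutes with the bidifferential operator $\pi$, equivalently that $d$ is a derivation of the Poisson bracket $\{-,-\}$. This compatibility ultimately traces back to the fact that the bi-symplectic form $\omega$ on $\tilde R$ is $\partial$-closed (Lemma \ref{volumeformasdRcycle}), so that its trace $\mathrm{Tr}(\omega)\in \Omega^2_{\mathrm{com}}(\mathrm{Rep}_n(\tilde R))^{\mathrm{GL}}$ is $d$-closed in the internal differential sense; the induced Poisson bivector is then preserved by $d$. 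Concretely, one can verify directly on generators that $d$ applied to $\langle x^\alpha,x^\beta\rangle$ vanishes (as it is a scalar), and that $d$ commutes with $\partial/\partial x^\alpha_{ij}$ up to the Koszul sign built into the formula for $d(x^\alpha_{ij})$, which is consistent with the cyclic invariance of the pairing used in the proof of Proposition \ref{prop:shiftedbPontildeR}.

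Combining these three ingredients yields the stated result; the argument is formally identical to the quiver case of Schedler \cite{Schedler} once one replaces the path algebra $k\bar Q$ by $\tilde R = \Omega(\tilde A^{\ac})$ and the arrow-pairing by the nondegenerate cyclic pairing on $A^{\ac}$ coming from the Calabi--Yau structure.
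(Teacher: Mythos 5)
Your overall strategy coincides with the paper's: associativity and flatness over $k[\hbar]$ are the standard constant-coefficient Moyal--Weyl facts, and the only point requiring proof is that the internal differential is a derivation of $\star$, which one wants to reduce to the already-established compatibility of the differential with the Poisson bracket. The problem is that the one step you actually need to justify is the one you assert rather than prove. It is \emph{not} true that the internal differential commutes with the bidifferential operator $\pi$ of \eqref{formula:Poissonstructure}: since $d(x^{\alpha}_{ij})=\sum_{(x^{\alpha})}(-1)^{|x^{\alpha_1}|}\sum_k x^{\alpha_1}_{ik}x^{\alpha_2}_{kj}$ is quadratic in the generators, the coordinate derivations $\partial/\partial x^{\alpha}_{ij}$ do not commute with it, so your claimed equivalence between ``$d$ commutes with $\pi$'' and ``$d$ is a derivation of $\{-,-\}$'' fails. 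Only the latter holds, and only after composing with $\mu$ and antisymmetrizing, where the cyclicity of the pairing produces the cancellation. Consequently, the statement that the differential is compatible with $\mu\circ\pi^r$ for $r\ge 2$ does not follow formally from the $r=1$ case by ``induction in the powers of $\hbar$''; some argument must bridge the gap.

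The paper supplies exactly this bridge: since $\pi^r(f,g)$ is of order $r$ in each argument, it suffices to verify \eqref{boundarycommuteswithquantization} when $f$ and $g$ are degree-$r$ monomials, in which case $\mu\circ\pi^r(f,g)$ is a scalar (so its differential vanishes) and each term of $\mu\circ\pi^r(\partial f,g)+\mu\circ\pi^r(f,\partial g)$ factors as a common $\pi^{r-1}$-scalar obtained from the omitted variables, times the $r=1$ expression $\pi\bigl(d(x^{\alpha_p}_{i_pj_p}),y^{\beta_q}_{k_q\ell_q}\bigr)+\pi\bigl(x^{\alpha_p}_{i_pj_p},d(y^{\beta_q}_{k_q\ell_q})\bigr)$, which vanishes by the compatibility of the bracket with the differential (Proposition \ref{prop:shiftedbPontildeR}). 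Your appeal to the $\partial$-closedness of $\omega$ (Lemma \ref{volumeformasdRcycle}) points in the right direction but by itself only yields the $r=1$ statement. Once this factorization is inserted, the rest of your argument is fine.
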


\begin{proof}
We only need to show that the differential $d$ commutes with $\star$,
or equivalently,
$\partial$ commutes with $\mu \circ  \pi^r(-,-)$, for all $r\in\mathbb N$.

In fact, since $\pi^r(f, g)$ is of $r$-th order for both arguments, we only need to check
the case when $f$ and $g$ are degree $r$ monomials.
In this case, $\mu \circ \pi^n(f, g)$ is a number, whose differential is zero, and hence
we need to check
\begin{equation}\label{boundarycommuteswithquantization}
\mu \circ\pi^r(\partial f, g)+\mu \circ\pi^r(f, \partial g)=0.
\end{equation}
Suppose $f=x_{i_1j_1}^{\alpha_1}\cdots x_{i_rj_r}^{\alpha_r}, 
g=y_{k_1\ell_1}^{\beta_1}\cdots y_{k_r\ell_r}^{\beta_r}$.
Then up to sign, 
$\mu \circ\pi^r(\partial f, g)$ and $\mu \circ\pi^r(f, \partial g)$
both contain a common scalar factor which is obtained by
applying $\pi^{r-1}$ to $x_{i_1j_1}^{\alpha_1}\cdots\widehat
{x_{i_pj_p}^{\alpha_p}}\cdots x_{i_rj_r}^{\alpha_r}$
and 
$y_{k_1\ell_1}^{\beta_1}\cdots \widehat{y_{k_q\ell_q}^{\beta_q}}\cdots y_{k_r\ell_r}^{\beta_r}$
where $\widehat{\;\;\;\;}$ means the corresponding component is omitted.
The rest factors are
just $\pi(d({x_{i_pj_p}^{\alpha_p}}), y_{k_q\ell_q}^{\beta_q})$
and $\pi({x_{i_pj_p}^{\alpha_p}}, d(y_{k_q\ell_q}^{\beta_q}))$ respectively.
This means, to prove \eqref{boundarycommuteswithquantization}
it is sufficient to show
$
\pi(f, g)
$
commutes with the boundary, which is already done.
This proves the statement.
\end{proof}

\subsection{Quantization of $\tilde R_\natural$}

In this subsection we study the quantization of the Lie bialgebra on $\tilde R_{\natural}$.
Let us start with several definitions.

\begin{definition}[Lie bialgebra]
Suppose that  $(L,\{-,-\})$ is a graded Lie algebra with the bracket $\{-,-\}$ having degree $m$
and $(L,\delta)$ is a graded Lie coalgebra with the cobracket $\delta$ having degree $n$.
The triple $(L,\{-,-\},\delta)$ is called a  {\it Lie bialgebra} of degree $(m,n)$ 
if the following Drinfeld compatibility
(also called the cocycle condition) holds:
for all $a, b\in L$,
\begin{equation}\label{Drinfeld com}
  \delta(\{a,b\})=(ad_a\otimes id+id\otimes ad_a)\delta (b)+(ad_b\otimes id+id\otimes ad_b)\delta (a),
\end{equation}
where $ad_a(b)=\{a,b\}$ is the adjoint action.
If furthermore, $\{-,-\}\circ\delta(g)\equiv 0$ for any $g\in L$, then the Lie bialgebra is called {\it involutive}.
\end{definition}

In the above definition, if $L$ is equipped with a differential which commutes with both
the Lie bracket and Lie cobracket, then it is called a {\it DG Lie bialgebra}.

Now let $A$ be a Koszul Calabi-Yau algebra.
Recall that the DG Lie bracket on $\tilde R_{\natural}$
is given by the following formula ({\it c.f.} \eqref{def:doublePoissonbracket} and \eqref{fromdbtoLie}):
\begin{equation}
\{ x, y\}:=\sum_{i=1}^k\sum_{j=1}^\ell
(-1)^{\sigma_{ij}}
( x_i, y_j)\cdot\mathrm{pr}
( y_1\cdots y_{j-1}  x_{i+1}\cdots  x_kx_1\cdots  x_{i-1} y_{j+1}\cdots  y_{\ell}),
\end{equation}
for $x, y\in\tilde R_{\natural}$
represented by $( x_1\cdots  x_k)$ and $y=( y_1\cdots  y_{\ell})$ in $\tilde R$,
where $\mathrm{pr}(-)$ means the projection of $\tilde R$ to $\tilde R_{\natural}$.
Now define
$$\delta:\tilde R_{\natural}\longrightarrow  \tilde R_{\natural}\otimes
\tilde R_{\natural}$$
by
\begin{multline*}
\delta( x_1 x_2 \cdots  x_n)\\
:=\sum_{i,j:\, i<j} (-1)^{\sigma_{ij}}(x_i, x_j)\cdot\mathrm{pr}
( x_1 \cdots x_{i-1} x_{j+1}  \cdots  x_n)\otimes\mathrm{pr} ( x_{i+1} \cdots  x_{j-1})\\
-\sum_{i,j:\, i<j} (-1)^{\sigma_{ij}'}(x_i, x_j)\cdot\mathrm{pr} ( x_{i+1} \cdots  x_{j-1})
\otimes \mathrm{pr}( x_1 \cdots  x_{i-1} x_{j+1} \cdots  x_n).
\end{multline*}

\begin{theorem}[\cite{CEG} Theorem 1(i)]
$(\tilde R_{\natural}, \{-,-\},\delta)$
forms an involutive DG Lie bialgebra of degree $(2-d, 2-d)$.
\end{theorem}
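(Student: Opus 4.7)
The plan is to verify the Lie bialgebra axioms directly from the explicit formulas for $\{-,-\}$ and $\delta$, viewing elements of $\tilde R_{\natural}$ as cyclic words in the generators $x_1, x_2, \ldots \in A^{\ac}[1]$. In this picture, both operations become combinatorial moves on cyclic words weighted by the graded-skew pairing $(-,-)$ inherited from the cyclic structure on $A^!$ (Proposition \ref{dualitybetweenCYcyclic}): the bracket splices two cyclic words into one at a paired pair of letters, while the cobracket splits a single cyclic word into two at an internal pair. The degrees $(2-d,2-d)$ come directly from the degree of the pairing together with the shifts built into $A^{\ac}[1]$.

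First, the Lie bracket half of the statement already reduces to established results. Antisymmetry, Jacobi, and compatibility with $\partial$ for $\{-,-\}$ follow from Proposition \ref{prop:shiftedbPontildeR} and \eqref{fromdbtoLie} together with \cite[Lemma 2.4.1]{VdB}, exactly as in the argument already carried out in the proof of Proposition \ref{prop:shiftedbPontildeR}.

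Next I would set up $\delta$. Well-definedness on $\tilde R_{\natural}$ requires cyclic invariance of the right-hand side of its defining formula; this follows because cyclic rotation of the representative permutes the pairs $(i,j)$ in a way compensated by the Koszul signs $\sigma_{ij}, \sigma_{ij}'$ together with the cyclic symmetry \eqref{cyclicconditionforcoalg} of the pairing. Graded co-antisymmetry is immediate from the formula, whose two summands are swaps of each other with an intervening minus sign. Compatibility of $\delta$ with $\partial$ reduces to the identity already used in Lemma \ref{volumeformasdRcycle}: applying $\partial$ breaks one further letter via the coproduct of $\tilde A^{\ac}$, and the extra terms cancel by co-associativity together with the cyclicity of the pairing. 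Co-Jacobi then follows by expanding $(\mathrm{id}\otimes\delta)\circ\delta$ as a sum over ordered triples of letter contractions and checking that the three cyclic permutations match up in pairs of oppositely signed terms.

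Finally comes the interaction of $\{-,-\}$ and $\delta$. For involutivity, the two summands of $\delta$ differ only by the swap of tensor factors together with a sign, so applying $\mu\circ\{-,-\}$ to them produces terms which cancel in pairs by the graded skew-symmetry of the pairing. For Drinfeld compatibility \eqref{Drinfeld com} I would expand $\delta\{x,y\}$ for two cyclic words $x,y$: it splits into terms in which the cobracket contracts two letters from the same original word, which reproduce the contributions $(\mathrm{ad}_x\otimes \mathrm{id} + \mathrm{id} \otimes \mathrm{ad}_x)\delta(y)$ and its $x\leftrightrightarrows y$ counterpart, and terms in which it contracts one letter from $x$ with one from $y$, which cancel in pairs using the cyclic symmetry of the pairing. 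I expect the main obstacle to be sign bookkeeping: the shift $[1]$ on the generators, the Koszul signs $\sigma_{ij}$, the graded skew-symmetry of $(-,-)$, and the sign conventions for $\sigma_{(123)}$ and $\sigma_{(132)}$ must all line up. Once these are pinned down, the combinatorial argument is the Koszul-graded analog of Schedler's treatment \cite{Schedler} of the necklace Lie bialgebra of a doubled quiver, recovered here as the special case described in \S\ref{subsect:relationswithquivers}.
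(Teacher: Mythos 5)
You are reconstructing a proof that the paper itself does not supply: immediately after stating this theorem the authors say it is proved in \cite{CEG}, that it is directly inspired by \cite{Schedler}, and that the only genuinely new point relative to the quiver case is the compatibility of the bracket and cobracket with the differential $\partial$, which is guaranteed by the cyclic condition \eqref{cyclicconditionforcoalg}. Your overall plan --- treat elements of $\tilde R_{\natural}$ as cyclic words in $A^{\ac}[1]$, reduce the bracket axioms to Proposition \ref{prop:shiftedbPontildeR} and Van den Bergh's lemma, check $\partial$-compatibility of $\delta$ by coassociativity plus cyclicity exactly as in Lemma \ref{volumeformasdRcycle}, and run Schedler's combinatorics for the remaining axioms --- is precisely the route the paper points to, and most of the individual steps are sound.

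One step, however, fails as described: involutivity. You argue that since the two summands of $\delta$ are swaps of one another with a sign, applying $\mu\circ\ldb-,-\rdb$ makes them cancel by the graded antisymmetry of the bracket. But the bracket and the cobracket both have degree $2-d$, so the Koszul sign governing the co-antisymmetry of $\delta$ has the same parity as the sign in the antisymmetry of $\{-,-\}$; composing, the two summands \emph{reinforce} rather than cancel, and you obtain twice the composite of the bracket with the un-antisymmetrized splitting map. Involutivity is therefore not a term-by-term cancellation inside a single $\delta$-term: it is a cancellation across the double sum over contraction positions. Concretely, the term that splits the cyclic word at the paired letters $(x_i,x_j)$ and then rejoins the two resulting pieces by contracting $(x_k,x_l)$ (necessarily a pair interleaved with $(i,j)$ in the cyclic order, since otherwise $x_k$ and $x_l$ land in the same piece) cancels against the term that splits at $(x_k,x_l)$ and rejoins at $(x_i,x_j)$: the two produce the same cyclic word with opposite signs. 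This exchange-of-roles argument is the one in \cite{Schedler}, and the same mechanism --- not the cyclic symmetry of the pairing alone --- is what kills the cross terms in your verification of the Drinfeld compatibility \eqref{Drinfeld com}.
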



This Theorem, together with the following Theorem \ref{thm:existenceofquantization},
is proved in \cite{CEG}.
They are directly inspired by \cite{Schedler}; what is new there is that
the Lie bracket and cobracket thus defined are compatible with the differential.
As we remarked in the proof of Proposition \ref{prop:shiftedbPontildeR}, 
the cyclic condition \eqref{cyclicconditionforcoalg} 
guarantees that all constructions respect it. 
We thus omit the proof and refer the interested 
reader to \cite{CEG} for more details.


\begin{definition}[Quantization of Lie bialgebra]
Suppose $(L, \{-,-\},\delta)$ is a DG Lie bialgebra of degree $(m, n)$ over the field $k$.
A {\it quantization} of $L$ is a Hopf algebra $(A,\star,\Delta)$, flat over $k[\hbar,h]$, where $\hbar$
and $h$
are formal parameters of degree $-m$ and $-n$ respectively, together with a surjective map
$$
\Psi: A\to \mathbf\Lambda^\bullet L
$$
such that for all $x, y\in A$,
\begin{equation}\label{def:quantization}
\Psi\Big(\frac{x\star y-y\star x}{\hbar}\Big)=\{\Psi(x),\Psi(y)\},\quad
\Psi\Big(\frac{\Delta(x)-\Delta^{\mathrm{op}}(x)}{h}\Big)=\delta(\Psi(x)),
\end{equation}
where
$\Delta^{\mathrm{op}}(-)$ means the opposite coproduct,
and $\delta: \mathbf\Lambda^\bullet L\to  \mathbf\Lambda^\bullet L\otimes \mathbf\Lambda^\bullet L$ is the
cobracket induced by $\delta: L\to L\otimes L$.
\end{definition}

Let us remind that 
the cobracket on $ \mathbf\Lambda^\bullet L$ is defined as follows:
for any Lie coalgebra $(L,\delta)$, its graded symmetric product $ \mathbf\Lambda^\bullet L$ admits
a co-Poisson algebra structure which is induced by the Lie coalgebra structure $\delta$ 
via the formula:
$$\delta\circ \mu=(\mu\otimes \mu)\circ(1\otimes\tau\otimes 1)\circ (\delta\otimes\Delta+\Delta\otimes\delta),$$
where $\mu$ is the product of $ \mathbf\Lambda^\bullet L$, 
$\tau:a\otimes b\mapsto(-1)^{|a||b|} b\otimes a$ is the switching
operator, and $\Delta$ is the coproduct of $ \mathbf\Lambda^\bullet L$:
$$
\Delta(x^I)=\sum_{I_1\cup I_2=I}x^{I_1}\otimes x^{I_2}.
$$

In the rest of this subsection, we study the quantization of $\tilde R_{\natural}$.
Recall that $\tilde R=T(A^{\ac}[1])$ and $A^{\ac}$ is a cyclic coalgebra.

Now let
$LA^{\ac}=A^{\ac}\otimes k[\nu,\nu^{-1}]$,
where $\nu$ is a formal parameter of degree $0$.
An element $a\otimes \nu^r$ in $LA^{\ac}$ is denoted by $(a, r)$.
Let $T(LA^{\ac}[1])$ be the free tensor algebra of the desuspension of $LA^{\ac}$,
where its elements are written in the form 
$
[(a_{1}, r_{1})|\cdots|(a_{p}, r_{p})]
$, for $(a_1, r_1),\cdots, (a_p, r_p)\in LA^{\ac}$.
Let $LH$ 
be the commutator quotient space of $T(LA^{\ac}[1])$, namely
$LH=T(LA^{\ac}[1])_{\natural}$.
To avoid complicated notations, 
in the following we write elements in 
$LH$ again in the form
$
[(a_{1}, r_{1})|\cdots|(a_{p}, r_{p})]
$; in other words we omit the symbol $\mathrm{pr}(-)$.

Let $SLH$ be the graded symmetric algebra generated by $LH$,
whose product is denoted by $\bullet$.
Define a differential $\partial$ on $SLH$ given by the following formula:
\begin{multline}\label{dofSLH}
\partial\big([(a_{1,1}, r_{1,1})|\cdots|(a_{1, p_1}, r_{1, p_1})]
\bullet\cdots\bullet[(a_{s,1}, r_{s,1})|\cdots|(a_{s, p_s}, r_{s, p_s})]\big)\\
:=\sum_{i=1}^s\sum_{j=1}^{p_s}\sum_{(a_{i,j})}(-1)^{\sigma_{ij}}
[(a_{1,1},\tilde r_{1,1})|\cdots|(a_{1, p_1}, \tilde r_{1, p_1})]\bullet\cdots\quad\quad\quad\quad\\
\bullet[(a_{i,1}, \tilde r_{i,1})|\cdots|
(a_{i,j}',r_{i,j})|(a_{i,j}'', 1+r_{i,j})|\cdots
(a_{i, p_i}, \tilde r_{i, p_i})]\bullet\cdots,
\end{multline}
where $(-1)^{\sigma_{ij}}$ is the Koszul sign, $a_{i,j}'$ and $a_{i,j}''$ come from
$\Delta(a_{i,j})=\sum_{(a_{i,j})}a_{i,j}'\otimes a_{i,j}''$,
and
for $(i',j')\ne (i,j)$,
$$
\tilde r_{i',j'}=\left\{
\begin{array}{ll}
r_{i',j'},&\mbox{if}\; r_{i',j'}\le r_{i,j}\\
1+r_{i',j'},&\mbox{if}\; r_{i',j'}>r_{i,j}.
\end{array}
\right.
$$
The coassociativity of $A^{\ac}$ implies that $b^2=0$.

Now let $\hbar, h$ be formal parameters both of degree $n-2$,
and let
$SLH[\hbar,h]$ be the $k[\hbar,h]$-module genenrated by $SLH$,
on which $\partial$ extends $k[\hbar,h]$-linearly.
Let $\widetilde{SLH}$ be the subcomplex of $SLH[\hbar,h]$ spanned by 
\begin{equation}\label{elementsinH}
[(a_{1,1}, r_{1,1})|\cdots|(a_{1, p_1}, r_{1, p_1})]
\bullet\cdots\bullet[(a_{s,1}, r_{s,1})|\cdots|(a_{s, p_s}, r_{s, p_s})]
\end{equation}
where $r_{i,j}$ are all distinct.

Consider the quotient space of $\widetilde{SLH}$
by identifying 
$$[(a_{1,1}, r_{1,1})|\cdots|(a_{1, p_1}, r_{1, p_1})]
\bullet\cdots\bullet[(a_{s,1}, r_{s,1})|\cdots|(a_{s, p_s}, r_{s, p_s})]$$
with
$$
[(a_{1,1}, r_{1,1}')|\cdots|(a_{1, p_1}, r_{1, p_1}')]
\bullet\cdots\bullet[(a_{s,1}, r_{s,1}')|\cdots|(a_{s, p_s}, r_{s, p_s}')]
$$
under the condition that
$r_{i,j}<r_{i',j'}$ if and only if
$r_{i,j}'<r_{i',j'}'$.
Denote this quotient space by $\tilde A$.
Pick an element in $\tilde A$, suppose it is represented by
$[(a_{1,1}, r_{1,1})|\cdots|(a_{1, p_1}, r_{1, p_1})]
\bullet\cdots\bullet[(a_{s,1}, r_{s,1})|\cdots|(a_{s, p_s}, r_{s, p_s})]$,
without loss of generality, we may assume all $r_{i,j}$ are even,
then the image of \eqref{dofSLH}
represents an element in $\tilde A$.
This means that $\tilde A$ with the differential induced by $b$ is a chain complex.

Now let $\tilde B$ be the submodule of $\tilde A$ generated by elements of the following form:
\begin{enumerate}
\item[(1)]
$X- X_{i,j,i',j'}'-\hbar \cdot X_{i,j,i',j'}''$, 
where $i\ne i'$, $r_{i,j}<r_{i',j'}$, and there does not exist
$(i'',j'')$ with $r_{i,j}<r_{i'',j''}<r_{i,j'}$, and

\item[(2)] $X- X_{i,j,i,j'}'- h\cdot  X_{i,j,i,j'}''$, 
where $r_{i,j}<r_{i,j'}$, and there does not exist
$(i'',j'')$ with $r_{i,j}<r_{i'',j''}<r_{i,j'}$,
\end{enumerate}
where $X_{i,j,i',j'}'$ and $X_{i,j,i',j'}''$ are given as follows:
if $i\ne i'$, then $X_{i,j,i',j'}'$ is the same as $X$ except that $r_{i,j}$ and $r_{i',j'}$ are interchanged,
while
$X_{i,j,i',j'}''$ replaces
the factors
$[(a_{i,1}, r_{i,1})|\cdots|(a_{i,p_i}, r_{i, p_i})]$
and 
$[(a_{i',1}, r_{i',1})|\cdots|(a_{i',p_{i'}}, r_{i', p_{i'}})]$
by
$$(-1)^{\sigma_{iji'j'}}(a_{i,j}, a_{i',j'})
[(a_{i,j+1}, r_{i,j+1})|\cdots|(a_{i,j-1}, r_{i,j-1})|(a_{i',j'+1}, r_{i',j'+1})|\cdots|(a_{i',j'-1}, r_{i',j'-1})];
$$
similarly,
$X_{i,j,i,j'}'$ is the same as $X$ but with $r_{i,j}$ and $r_{i,j'}$ interchanged, while
$X_{i,j,i,j'}''$
replaces the factor with the following factor:
$$
(-1)^{\sigma_{ijij'}}( a_{i,j}, a_{i,j'})
[(a_{i,j'+1}, r_{i,j'+1})|\cdots|(a_{i,j-1}, r_{i,j-1})]\bullet[(a_{i,j+1}, r_{i,j+1})|\cdots|(a_{i,j'-1}, r_{i,j'-1})].
$$
It is proved in \cite[Lemma 14]{CEG} that $\tilde B$ is a subcomplex of $\tilde A$.
Let $H=\tilde A/\tilde B$. 

\begin{theorem}[\cite{CEG} Theorem 15]\label{thm:existenceofquantization}
There
is a DG Hopf algebra structure on $H$ over $k[\hbar,h]$, which quantizes
$\tilde R_{\natural}$.
\end{theorem}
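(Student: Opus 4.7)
The plan follows Schedler's strategy for quantizing the necklace Lie bialgebra of a doubled quiver, extended to accommodate the internal differential coming from the coproduct of $A^{\ac}$. The Hopf algebra structure on $H$ will be built on the quotient $\tilde A/\tilde B$, where the position labels $r_{i,j}$ encode a total order that is traded at the cost of $\hbar$ and $h$ against, respectively, the Moyal--Weyl bracket and its cobracket analogue. First I would define the product $\star$ on $H$: given two classes represented by labeled necklace monomials, choose representatives whose position sets are disjoint and take their $\bullet$-product in $\tilde A$. The type (1) relations in $\tilde B$, of the form $X-X'_{i,j,i',j'}-\hbar X''_{i,j,i',j'}$, say precisely that swapping positions across two different necklaces costs $\hbar$ times a ``contraction-and-merge'' term built from the pairing $(a_{i,j},a_{i',j'})$; this mimics \eqref{formula:Poissonstructure} in labeled form and makes $\star$ well defined and associative up to the standard diamond check. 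Next I define the coproduct $\Delta$ by the obvious deconcatenation of $\bullet$-products of necklaces into ordered pairs of subsets; the type (2) relations $X-X'_{i,j,i,j'}-h X''_{i,j,i,j'}$ govern the cost of transposing two positions \emph{within} one necklace, which cuts it into two smaller necklaces at the price of $h$ times the pairing, so that $\Delta-\Delta^{\mathrm{op}}$ reproduces the cobracket $\delta$ at first order in $h$.

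The map $\Psi:H\to\mathbf\Lambda^\bullet\tilde R_\natural$ is obtained by setting $\hbar=h=0$, which is equivalent to forgetting all position labels; labeled necklaces map to the corresponding classes in $\tilde R_\natural$ and $\bullet$ to the symmetric product. Surjectivity is clear, and the two identities in \eqref{def:quantization} follow by reading the type (1) and type (2) defining relations of $\tilde B$ modulo $\hbar^2$, respectively $h^2$, and identifying the leading terms with the formulas for $\{-,-\}$ and $\delta$ recalled in \S\ref{sect:shiftedPoisson}. The Hopf axioms (counit, antipode, bialgebra compatibility of $\Delta$ with $\star$) are checked on labeled representatives and follow from the standard combinatorics of concatenation versus deconcatenation, exactly as in \cite{Schedler,GS}.

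Compatibility with the differential $\partial$ defined in \eqref{dofSLH} is the part where the new content beyond \cite{Schedler,GS} enters. I would verify in turn that $\partial$ descends to $\tilde A$, preserves $\tilde B$, and is a graded derivation of $\star$ and a graded coderivation of $\Delta$. All three reduce to the same algebraic input: the coassociativity of the coproduct of $A^{\ac}$ together with the cyclic condition \eqref{cyclicconditionforcoalg}, which is exactly what made $\ldb-,-\rdb$ a DG double bracket on $\tilde R$ in Proposition \ref{prop:shiftedbPontildeR}. Here the computation is essentially the same, distributed over pairs of positions; the sign bookkeeping is identical to that appearing there and in \cite[Theorem 15]{BCER}.

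The main obstacle is flatness of $H$ over $k[\hbar,h]$, equivalently a PBW-type theorem identifying $H$ with $\mathbf\Lambda^\bullet \tilde R_\natural\otimes k[\hbar,h]$ as a $k[\hbar,h]$-module. I would prove this by a confluent rewriting argument on labeled necklace monomials: fix a normal form in which the positions appearing in each necklace are strictly increasing and the necklaces themselves are ordered, and show via Bergman's diamond lemma that every element of $\tilde A$ reduces uniquely to a $k[\hbar,h]$-combination of normal forms modulo $\tilde B$. The two critical ambiguities are the two ways of transposing three positions belonging to $3$, $2$, or $1$ distinct necklaces; resolving them reproduces precisely the Drinfeld cocycle condition \eqref{Drinfeld com} and the involutivity $\{-,-\}\circ\delta\equiv 0$ for the Lie bialgebra $\tilde R_\natural$, both of which hold by the theorem of \cite{CEG}. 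This is where involutivity of the Lie bialgebra is essential, and also where the present proof parallels, but properly generalizes, the quiver case in \cite{Schedler,GS}.
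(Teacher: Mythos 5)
Your proposal follows essentially the same route as the paper, which does not prove this theorem in-text but defers it to \cite[Theorem 15]{CEG}: the construction is Schedler's quantization of the necklace Lie bialgebra \cite{Schedler,GS}, with labeled necklaces, the relations in $\tilde B$ encoding the $\hbar$- and $h$-costs of transpositions, and the genuinely new ingredient being compatibility with the differential $\partial$, which the paper attributes, exactly as you do, to coassociativity of $A^{\ac}$ together with the cyclic condition \eqref{cyclicconditionforcoalg}. One small correction: the product $\star$ cannot be defined by an arbitrary choice of representatives with disjoint position sets, since different interleavings differ by $\hbar$-correction terms modulo $\tilde B$ and the result would be ill-defined; as in the paper's description and in \cite{Schedler}, one must normal-order, raising all labels of the second factor strictly above those of the first.
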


The product on $H$, denoted by $\star$, is easy to describe (as we will only use it):
for two elements in $H$, say $X$ and $Y$, suppose they are both
represented by elements in the form
\eqref{elementsinH},
 raise those $r_{i,j}$ in $Y$
such that they are all greater than those in $X$, then
the product of $X$ and $Y$, $X\star Y$, 
is represented by $X\bullet Y$.




\subsection{Lifting the trace map}
In this subsection we relate the quantization of $\tilde R_{\natural}$ with the one
of $\mathrm{Rep}_V(\tilde R)$. We set $\hbar=h$.

Extending the trace map \eqref{tracemapofPoissonalgebras}
by $k[\hbar]$-linearity, we obtain a $k[\hbar]$-linear map
$$
\widetilde{\mathrm{Tr}}: \mathbf\Lambda^\bullet\tilde R_{\natural}[\hbar] \to
 \mathrm{Rep}_V(\tilde R)[\hbar].
$$
Clearly $\widetilde{\mathrm{Tr}}$ commutes with the differential.
We have the following.

\begin{theorem}\label{thm:traceofquantization}
For a Koszul Calabi-Yau algebra $A$, the 
following map
\begin{equation}\label{tracemapofquantizedPoisson}
\widetilde{\mathrm{Tr}}:(\mathbf\Lambda^\bullet\tilde R_{\natural}[\hbar], \star )\to
(\mathrm{Rep}_V(\tilde R)[\hbar], \star).
\end{equation}
is a map of DG algebras over $k[\hbar]$.
\end{theorem}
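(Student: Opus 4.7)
The plan is to expand both sides of the identity $\widetilde{\mathrm{Tr}}(X\star Y) = \widetilde{\mathrm{Tr}}(X)\star\widetilde{\mathrm{Tr}}(Y)$ as formal power series in $\hbar$ and match them order by order. The star product on the left comes, under the identification of $H$ (at $h=\hbar$) with $\mathbf\Lambda^\bullet\tilde R_\natural[\hbar]$ as a $k[\hbar]$-module, from the Hopf algebra of Theorem \ref{thm:existenceofquantization}, while the star product on the right is Moyal--Weyl. Both are $k[\hbar]$-linear deformations of the ordinary commutative product, and $\widetilde{\mathrm{Tr}}$ is already a map of graded commutative algebras at $\hbar=0$; hence it suffices to verify the identity for $X, Y \in \tilde R_\natural$ represented by single cyclic words and to match the coefficient of each power $\hbar^m$ on the two sides.

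On the left, represent $X=[(a_1, r_1)|\cdots|(a_k, r_k)]$ and $Y=[(b_1, s_1)|\cdots|(b_\ell, s_\ell)]$ with all $s_j$ greater than all $r_i$ after the shift dictated by the definition of $\star$ on $H$. Repeated application of relation (1) in the definition of $\tilde B$ allows us to reorder the labels in any chosen manner; each elementary transposition produces an $\hbar$-correction that deletes the two letters being paired, contributes the scalar $(a_p, b_q)$, and splices the two necklaces at those positions. Collecting the coefficient of $\hbar^m$, one obtains a sum indexed by unordered sets of $m$ pairings between letters of $X$ and letters of $Y$, each summand being the product of the scalar pairings times the cyclic word obtained from splicing $X$ and $Y$ at the $m$ matched positions.

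On the right, expanding
$$\widetilde{\mathrm{Tr}}(X)\star\widetilde{\mathrm{Tr}}(Y) \;=\; \mu\circ\exp\!\bigl(\tfrac{\hbar}{2}\pi\bigr)\bigl(\widetilde{\mathrm{Tr}}(X)\otimes\widetilde{\mathrm{Tr}}(Y)\bigr)$$
using the explicit form \eqref{formula:Poissonstructure} of $\pi$, each application of $\pi$ differentiates one matrix entry $x^{a_p}_{ij}$ appearing in $\widetilde{\mathrm{Tr}}(X) = \mathrm{tr}(X^{a_1}\cdots X^{a_k})$ against one entry $x^{b_q}_{k'\ell'}$ appearing in $\widetilde{\mathrm{Tr}}(Y)$, producing the scalar $\langle x^{a_p}, x^{b_q}\rangle$ and Kronecker factors $\delta_{i\ell'}\delta_{jk'}$. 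These deltas sew the two matrix traces into a single trace of the spliced cyclic word, via the standard matrix-trace identity
$$\sum_{i,j}\frac{\partial\,\mathrm{tr}(\cdots X^{a_p}\cdots)}{\partial x^{a_p}_{ij}}\cdot\frac{\partial\,\mathrm{tr}(\cdots X^{b_q}\cdots)}{\partial x^{b_q}_{ji}} \;=\; \mathrm{tr}\bigl(\text{spliced cyclic word}\bigr).$$
The factor $1/m!$ in the exponential passes the sum from ordered to unordered $m$-tuples of derivations, the factor $1/2^m$ absorbs the antisymmetry of $\pi$, and what remains coincides term-by-term with the expansion of the left-hand side.

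The main obstacle is sign bookkeeping: the prefactor $(-1)^{n-|x^{\alpha}|}$ in $\pi$, the Koszul signs $\sigma_{ij}$ appearing in $\{-,-\}$, $\delta$, and the differential \eqref{dofSLH} on $SLH$, together with the antisymmetrization implicit in the Moyal--Weyl formula, must all conspire correctly. These signs are governed by the cyclic invariance \eqref{cyclicconditionforcoalg} of the pairing on $A^{\ac}$, exactly as in the proofs of Proposition \ref{prop:shiftedbPontildeR} and Theorem \ref{thm:existenceofquantization}; once they are fixed, the whole matching reduces to the necklace combinatorics of Ginzburg--Schedler \cite{GS,Schedler} transcribed verbatim to the Koszul Calabi--Yau setting. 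Compatibility with the differentials is automatic, since $\widetilde{\mathrm{Tr}}$ commutes with $\partial$ at $\hbar=0$ and both star products are chain maps, by the proposition preceding the theorem and by construction of $H$.
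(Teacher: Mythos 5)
Your proposal is correct and is essentially the paper's own argument: the paper reduces to showing $\widetilde{\mathrm{Tr}}$ is a graded algebra map (differential-compatibility being immediate) and then defers entirely to Schedler's computation in \cite[\S3.4]{Schedler}, observing that $\tilde R$ is the graded analogue of the path algebra of a doubled one-vertex quiver; your order-by-order Moyal--Weyl expansion, the matrix-trace splicing identity, and the pairing/necklace combinatorics are precisely the content of that cited argument, transcribed to the graded setting with the cyclicity condition \eqref{cyclicconditionforcoalg} controlling the signs. The only place you are slightly quicker than warranted is the reduction to two single necklaces (one should note that the bidifferential form of $\pi$ and the analogous form of relation (1) in $\tilde B$ make the induction to products of necklaces go through), but this is handled by the same combinatorics and does not affect correctness.
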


\begin{proof}
Since $\widetilde{\mathrm{Tr}}$ commutes with the differential on both sides,
we only need to show it is a graded algebra map; however, this is already done in Schedler 
\cite[\S3.4]{Schedler}. 

More precisely, for
a quiver $Q$, if we denote its double by $\bar Q$
then what Schedler constructed in \cite{Schedler} is the
following: 
\begin{enumerate}
\item[(1)] a Lie bialgebra structure, called the necklace Lie bialgebra,
on the commutator quotient space $(k\bar Q)_\natural$;
\item[(2)] a Hopf algebra $H$ over $k[\hbar]$, completely analogous to the construction in
previous subsection,
quantizing the necklace Lie bialgebra $(k\bar Q)_{\natural}$; 
\item[(3)] an algebra map from
this Hopf algebra to the Weyl algebra of differential operators
on $\mathrm{Rep}_V(kQ)$.
\end{enumerate}
Note that $\mathrm{Rep}_V(k\bar Q)$ is the cotangent space
of $\mathrm{Rep}_V(k Q)$, the algebra
of differential operators is exactly the Moyal-Weyl quantization
of $\mathrm{Rep}_V(k\bar Q)$.

As we remarked in \S\ref{subsect:relationswithquivers},
if we assume the the number of vertices of $Q$ is one, and the edges
of $Q$ are graded, then $\tilde R$ in the current paper is exactly the path algebra $k\bar Q$.
Therefore Schedler's construction and proof hold in our case.
\end{proof}

Later Ginzburg and Schedler showed in \cite{GS} that the quantization
constructed above is also of Moyal-Weyl type.  

In summary, we obtain the following commutative diagram
\begin{equation}\label{Diag:quantization}
\xymatrixcolsep{4pc}
\xymatrix{
(\mathbf\Lambda^\bullet\tilde R_{\natural}[\hbar], \star )
\ar[r]^{\widetilde{\mathrm{Tr}}}\ar@{~>}[d]_{\textup{quantization}}
&(\mathrm{Rep}_V(\tilde R)[\hbar], \star)\ar@{~>}[d]^{\textup{quantization}}\\
\mathbf\Lambda^\bullet\tilde R_{\natural}
\ar[r]^{\mathrm{Tr}}
&
\mathrm{Rep}_V(\tilde R).
}
\end{equation}
Recall that the images of $\mathrm{Tr}$ in fact lie in
$\mathrm{Rep}_V(\tilde R)^{\mathrm{GL}}$,
and the restriction of the Poisson structure
on $\mathrm{Rep}_V(\tilde R)$
gives the Poisson structure on 
$\mathrm{Rep}_V(\tilde R)^{\mathrm{GL}}$.
From \eqref{Diag:quantization}
we thus obtain the following commutative diagram
\begin{equation}\label{Diag:quantizationviatrace}
\xymatrixcolsep{4pc}
\xymatrix{
(\mathbf\Lambda^\bullet\tilde R_{\natural}[\hbar], \star )
\ar[r]^{\widetilde{\mathrm{Tr}}}\ar@{~>}[d]_{\textup{quantization}}
&(\mathrm{Rep}_V(\tilde R)^{\mathrm{GL}}[\hbar], \star)\ar@{~>}[d]^{\textup{quantization}}\\
\mathbf\Lambda^\bullet\tilde R_{\natural}
\ar[r]^{\mathrm{Tr}}
&
\mathrm{Rep}_V(\tilde R)^{\mathrm{GL}}.
}
\end{equation}
Recall that $\bar{\tilde R}_{\natural}\simeq\mathrm{CC}_\bullet(A)$
(see Proposition \ref{Prop:cotangentcomplex} and Convention \ref{conv:cyclichomology})
and observe that the bracket of any element in $\tilde R_\natural$
with unit of $\tilde R$ vanishes, we have an embedding
$\mathrm{HC}_\bullet(A)\hookrightarrow\mathrm{H}_\bullet(\tilde R_{\natural})$ as graded
vector spaces,
where $\mathrm{HC}_\bullet(A)$ is the cyclic homology of $A$. 
By pulling the Lie bialgebra structure on $\mathrm{H}_\bullet(\tilde R_{\natural})$,
$\mathrm{HC}_\bullet(A)$ thus has a Lie bialgebra structure of degree $(2-n, 2-n)$. 
Thus taking the homology on both sides of \eqref{Diag:quantizationviatrace}
and combining the above argument we in fact
obtain a commutative diagram
\begin{equation*}
\xymatrixcolsep{4pc}
\xymatrix{
(\mathbf\Lambda^\bullet\mathrm{HC}_\bullet(A)[\hbar], \star )
\ar[r]^{\widetilde{\mathrm{Tr}}}\ar@{~>}[d]_{\textup{quantization}}
&\big(\mathrm H_\bullet(\mathrm{Rep}_V(\tilde R))^{\mathrm{GL}}[\hbar], \star\big)\ar@{~>}[d]^{\textup{quantization}}\\
\mathbf\Lambda^\bullet\mathrm{HC}_\bullet(A)
\ar[r]^{\mathrm{Tr}}
&
\mathrm H_\bullet(\mathrm{Rep}_V(\tilde R))^{\mathrm{GL}},
}
\end{equation*}
where on the right hand side, we have used the fact that
$\mathrm{H}_\bullet(\mathrm{Rep}_V(\tilde R)^{\mathrm{GL}})\cong
\mathrm H_\bullet(\mathrm{Rep}_V(\tilde R))^{\mathrm{GL}}$ (see \cite[Theorem 2.6(b)]{BKR} 
for a proof).



\section{Derived representation schemes}\label{sect:DRep}

In this section, we briefly discuss the results in previous sections with
the derived representation schemes, introduced
by Berest, Khachatryan and Ramadoss.
The interested reader may refer to \cite{BCER,BFR,BKR,BR} for more details.

\subsection{Derived representation schemes}


In algebraic geometry,
there is an equivalence
of categories between affine
schemes and commutative algebras,
and many geometric structures on
affine schemes
have an algebraic description,
and vice versa. 
However, for associative algebras,
this correspondence does not exist.
In 1998, Kontsevich and Rosenberg \cite{KR} proposed
a heuristic principle to study
the {\it non-commutative geometry} on
associative, not-necessarily commutative, algebras, which is roughly stated as follows:
for an associative algebra over a field $k$, say $A$, any non-commutative geometric structure,
such as non-commutative Poisson, non-commutative symplectic, etc., 
should induce its classical counterpart in a natural way on
its representation scheme $\mathrm{Rep}_V(A)$, for all $k$-vector space $V$.

During the past decade, much progress has been made in the study of non-commutative
geometry under the guidance of the Kontsevich-Rosenberg Principle (see \cite{CBEG,Ginzburg01,VdB2}). 
However, for a general algebra $A$, $\mathrm{Rep}_V(A)$ is very singular.
In 2011 Berest et. al. \cite{BKR} suggested
that 
one should instead consider the {\it derived} representation schemes of $A$.
In this case, one replaces $A$ with its cofibrant resolution, say $QA$, in the category
of differential graded algebras,
and then considers the DG representation schemes of $QA$,
which are then smooth in the DG sense.
The cofibrant resolution of a DG algebra is not unique, but unique up to
homotopy.
Correspondingly, $\mathrm{Rep}_V(QA)$ is also unique up to homotopy.
By modulo such ambiguity, the DG representation scheme of $QA$ in $V$
in the homotopy category of DG commutative 
algebras,
denoted by $\mathrm{DRep}_V(A)$ and called the {\it derived representation scheme} of $A$ in $V$,
is a very good object that we can successfully apply the Kontsevich-Rosenberg principle.
Formally, their result is stated as follows.

\begin{theorem}[\cite{BKR}]
For any two objects $A, B\in\mathbf{DGA}$ and any $f\in\mathrm{Hom}(A, B)$, 
let $QA$ and $QB$ be any cofibrant replacement of $A$ and $B$ respectively,
and $Qf\in\mathrm{Hom}(QA,QB)$ be the corresponding cofibrant lifting of $f$. 
The functor \eqref{functor:Rep} has a total left derived functor
$$
\mathbf L(-)_V: \mathbf{Ho}(\mathbf{DGA})\to\mathbf{Ho}(\mathbf{CDGA}),\quad A\mapsto (QA)_V,\; f\mapsto (Qf)_V.
$$
\end{theorem}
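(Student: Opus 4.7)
The plan is to equip $\mathbf{DGA}$ and $\mathbf{CDGA}$ with their standard model structures (weak equivalences are quasi-isomorphisms, fibrations are degreewise surjections) and realize $(-)_V$ as the left half of a Quillen adjunction. The adjunction isomorphism \eqref{equiv:homsets} exhibits $(-)_V$ as left adjoint to the functor $G:\mathbf{CDGA}\to\mathbf{DGA}$, $B\mapsto B\otimes\mathrm{End}\,V$, and once Quillen adjointness is in hand the existence and formula for $\mathbf{L}(-)_V$ are formal consequences of Quillen's theorem on derived functors.

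To conclude that $((-)_V,G)$ is a Quillen pair it suffices to verify that $G$ is right Quillen. Since $\mathrm{End}\,V$ is finite-dimensional (hence free as a graded $k$-module), tensoring with it is exact and preserves degreewise surjections; thus $G$ carries fibrations to fibrations and trivial fibrations to trivial fibrations. Quillen's theorem then supplies a total left derived functor $\mathbf{L}(-)_V:\mathbf{Ho}(\mathbf{DGA})\to\mathbf{Ho}(\mathbf{CDGA})$, and the explicit formula $\mathbf{L}(-)_V(A)=(QA)_V$ is the standard construction. The assignment $f\mapsto(Qf)_V$ is well defined in $\mathbf{Ho}(\mathbf{CDGA})$ because any two cofibrant liftings $Qf,Qf'$ of $f$ are left homotopic, and $(-)_V$, being left Quillen, sends left homotopies between cofibrant DGAs to left homotopies in $\mathbf{CDGA}$, hence to equal morphisms in the homotopy category.

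The main obstacle, in my view, is locating (or verifying) the standard model structure on $\mathbf{DGA}$ in the $\mathbb{Z}$-graded unbounded setting and confirming that quasi-free DGAs (such as the cobar construction $\Omega(\tilde A^{\ac})$ used throughout the paper) genuinely serve as cofibrant replacements. In the bounded-below case this is classical work of Hinich and Jardine, but the unbounded setting requires additional care with transfinite cellular arguments. An alternative that bypasses the full model-theoretic machinery is to invoke Ken Brown's lemma directly: using the explicit matrix presentation of $R_V$ for quasi-free $R$ given in the example following Theorem \ref{idoftwosets}, one checks by hand that a quasi-isomorphism $Qf:QA\to QB$ of quasi-free DGAs induces a quasi-isomorphism $(Qf)_V$. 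Via the word-length filtration on the free generators, whose associated graded is a symmetric algebra on the matrix-entry generating space, this reduces to the classical commutative case and simultaneously yields the homotopy invariance needed for the $f$-level assignment to be well defined.
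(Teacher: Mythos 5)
The paper offers no proof of this statement---it is imported verbatim from \cite{BKR}---so the only meaningful comparison is with the argument in that reference, and your Quillen-adjunction route is exactly it: by \eqref{equiv:homsets} the functor $(-)_V$ is left adjoint to $B\mapsto B\otimes\mathrm{End}\,V$, and since $\mathrm{End}\,V$ is finite-dimensional over a field this right adjoint preserves degreewise surjections and quasi-isomorphisms, hence is right Quillen, so $\mathbf L(-)_V$ exists with the stated formula and the independence of the choice of $QA$ and $Qf$ follows from standard Quillen theory as you say. Your main argument is therefore complete and matches the source; the only soft spot is the ``alternative'' word-length filtration sketch at the end, which as stated overlooks that a quasi-isomorphism of quasi-free DGAs need not respect word length and that the associated spectral sequence need not converge for unbounded complexes, but since this is offered only as a bypass of machinery you do not actually need, it does not affect the proof.
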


According to \cite{BKR},
$\mathbf L(A)_V$ is called the {\it derived representation scheme}
(or {\it DRep} for short)
of $A$ in $V$, and is also denoted by $\mathrm{DRep}_V(A)$; its homology
$\mathrm{H}_\bullet(\mathrm{DRep}_V(A))$ is called
the {\it representation homology} of $A$, and is sometimes denoted by $\mathrm H_\bullet(A, V)$.


\begin{example}[DReps of Koszul algebras]
Suppose $A$ is a Koszul algebra. Then $A$ has an explicit cofibrant resolution 
$\Omega(A^{\ac})\stackrel{\sim}{\twoheadrightarrow} A$.
Thus $\mathrm{DRep}_n(A)$ is explicit in the Koszul case:
suppose $\bar A^{\ac}$ has a set of basis $\{x^\alpha\}_{\alpha\in I}$, and
the reduced coproduct $\bar\Delta(x^{\alpha})=\sum_{(x^{\alpha})} x^{\alpha_1}\otimes x^{\alpha_2}$.
Then $\mathrm{DRep}_n(A)$ is the quasi-free DG commutative algebra generated by
\begin{equation}\label{DRepofKoszulCY}
\big\{x_{ij}^{\alpha}\;\big|\;\alpha\in I, 1\le i,j\le n, |x_{ij}^{\alpha}|=|x^{\alpha}|-1\big\},
\end{equation}
with
$$
\partial(x_{ij}^{\alpha})=\sum_{(x^{\alpha})}(-1)^{|x^{\alpha_1}|}\sum_{k=1}^{n}x_{ik}^{\alpha_1}\cdot x_{kj}^{\alpha_2}.
$$
\end{example}

\subsection{Comparison of $\mathrm{DRep}_n(A)$ and $\mathrm{Rep}_n(\tilde R)$}

Suppose $A$ is a Kosuzl algebra.
Then one obtains $R=\Omega A^{\ac}$
from
$\tilde R=\Omega\tilde A^{\ac}$
by identifying the counit of $A^{\ac}$ with zero. 
Thus correspondingly,
if we identify the coordinates of $\mathrm{Rep}_V(\tilde R)$
corresponding the co-unit of $A^{\ac}$ with zero, we obtain
$\mathrm{Rep}_V(R)$, which is isomorphic to
$\mathrm{DRep}_V(A)$ in the homotopy category of DG algebras.

We here give two remarks regarding the role of the co-unit of $A^{\ac}$.
On the one hand, 
since $R=\Omega(A^{\ac})\simeq A$ is a cofibrant resolution of $A$,
$\mathrm{Rep}_V(R)$
is a derived functor of $A$.
 
On the other hand, considering
$\mathrm{Rep}_V(\tilde R)$ has the advantage when considering the $\mathrm{GL}$-invariants;
namely the $\mathrm{GL}(V)$-invariant functions, the cotangent vector space
and the tangent vector space on $\mathrm{Rep}_V(\tilde R)$ 
is directly related
to the cyclic complex, the Hochschild chain and cochain complexes of $A$ via the trace
maps (see \eqref{diag:tracefromHochschildcomplex} and \eqref{diag:tracefromcycliccomplex}). 
Sometimes, keeping the unit (respectively co-unit) of an algebra
(respectively coalgebra) is important
in the study of Noncommutative Geometry; see for example the early works
of Connes and Quillen in this field ({\it c.f.} \cite{Connes,Quillen}).

Now suppose $A$ is also Calabi-Yau.
In \cite{BCER,CEEY}, we showed that $R$ has a shifted Poisson structure, while
in the current paper we deduced the shifted Poisson structure on $\tilde R$.
We may understand them in this way:
geometrically, 
$\mathrm{DRep}_V(A)$ can be understood as
a DG subscheme
of 
$\mathrm{Rep}_V(\tilde R)$.
It is direct to see that the Poisson bracket \eqref{PoissononDRep}
on 
$\mathrm{Rep}_V(\tilde R)$
restricts to a Poisson bracket
on
$\mathrm{Rep}_V(R)$ studied in \cite{BCER} and \cite{CEEY};
in other words, $\mathrm{Rep}_V(R)$ is a shifted Poisson subscheme of
$\mathrm{Rep}_V(\tilde R)$.

\section{Example: The Sklyanin algebras}\label{sect:example}

In this section, we give two examples of Koszul Calabi-Yau algebras, namely
the 3- and 4-dimensional Sklyanin algebras,
and study the corresponding shifted bi-symplectic structure with some detail.
In general the geometry of the representations of Sklyanin algebras are complicated, 
however, both the derived representation schemes
$\mathrm{DRep}_V(A)$ and the representation schemes
$\mathrm{Rep}_V(\tilde R)$ are easy to describe; here
$A$ is a Sklyanin algebra and $\tilde R=\Omega(\tilde A^{\ac})$ as before.

\begin{example}[3-dimensional Sklyanin algebras]
Let $a,b,c\in k$ satisfying the following two conditions:
\begin{enumerate}
\item[$(1)$] $[a:b:c]\in\mathbb{P}^2_k\backslash D$, where
$$D=\{[1:0:0], [0:1:0], [0:0:1]\}\cup \{[a:b:c]|a^3=b^3=c^3=1\};$$
\item[$(2)$] $abc\ne 0$ and $(3abc)^3\ne (a^3+b^3+c^3)^3$.
\end{enumerate}
The 3-dimensional Sklyanin algebra $A=A(a,b,c)$
is the graded $k$-algebra with generators $x, y, z$ of degree one, and relations
$$
\begin{array}{c}
cx^2+bzy+ayz=0,\\
azx+cy^2+bxz=0,\\
byx+axy+cz^2=0.
\end{array}
$$
Smith showed in \cite[Example 10.1]{Smith}
that $A$ is Koszul, whose dual algebra $A^!$
is generated by $\xi_1,\xi_2,\xi_3$ with relations
$$\begin{array}{lll}
c\xi_2\xi_3-b\xi_3\xi_2,& b\xi_1^2-a\xi_2\xi_3,  & c\xi_3\xi_1-b\xi_1\xi_3,\\
b\xi_2^2-a\xi_3\xi_1,   & c\xi_1\xi_2-b\xi_2\xi_1,& b\xi_3^2-a\xi_1\xi_2.
\end{array}
$$
In degree 3, we have relations
$$
\begin{array}{l}
\xi_i^3=\displaystyle\frac{a}{b}\xi_1\xi_2\xi_3
=\displaystyle\frac{a}{b}\xi_3\xi_1\xi_2
=\displaystyle\frac{a}{b}\xi_2\xi_3\xi_1
=\displaystyle\frac{a}{c}\xi_2\xi_1\xi_3
=\displaystyle\frac{a}{c}\xi_3\xi_2\xi_1
=\displaystyle\frac{a}{c}\xi_1\xi_3\xi_2,\quad i=1,2,3,\\
\xi_i\xi_j^2=\xi_j^2\xi_i=0,\; i,j=1,2, 3\;\mbox{and}\; i\ne j.
\end{array}
$$
%
%
%
From these relations, we obtain that $A^{\ac}$ is isomorphic to a graded coalgebra
spanned by
\begin{equation}\label{generatorsofdualof3Skl}
\begin{array}{cl}
A_0^{\ac}: & {\mathbf 1}\\
A_1^{\ac}: & \xi_1^*,\xi_2^*,\xi_3^*\\
A_2^{\ac}: &\xi_1^*\xi_1^*,\xi_2^*\xi_2^*,\xi_3^*\xi_3^*\\
A_3^{\ac}:&\xi_3^*\xi_2^*\xi_1^*
\end{array}
\end{equation}
with the coproducts given by
\begin{eqnarray}
\Delta(\mathbf 1)&=&\mathbf 1\otimes\mathbf 1,\nonumber\\
\Delta(\xi_i)&=&\mathbf 1\otimes \xi_i+\otimes \xi_i\otimes \mathbf 1,\quad i=1,2,3,\nonumber\\
\Delta(\xi_1^*\xi_1^*)&=&\mathbf 1\otimes \xi_1^*\xi_1^* +\xi_1^*\otimes \xi_1^*
+\frac{a}{b}\xi_2^*\otimes\xi_3^*+\frac{a}{c}\xi_3^*\otimes\xi_2^*
+\xi_1^*\xi_1^*\otimes\mathbf 1,
\nonumber\\
\Delta(\xi_2^*\xi_2^*)&=&\mathbf 1\otimes \xi_2^*\xi_2^*+\xi_2^*\otimes\xi_2^*
+\frac{a}{c}\xi_1^*\otimes\xi_3^*
+\frac{a}{b}\xi_3^*\otimes\xi_1^*
+\xi_2^*\xi_2^*\otimes\mathbf 1,
\nonumber \\
\Delta(\xi_3^*\xi_3^*)&=& \mathbf 1\otimes \xi_3^*\xi_3^*+\xi_3^*\otimes\xi_3^*
+\frac{a}{b}\xi_1^*\otimes \xi_2^*
+\frac{a}{c}\xi_2^*\otimes \xi_1^*
+\xi_3^*\xi_3^*\otimes\mathbf 1,\nonumber\\
\Delta(\xi_3^*\xi_2^*\xi_1^*)
&=&{\mathbf 1}\otimes\xi_3^*\xi_2^*\xi_1^*
+\frac{b}{a}\xi_1^*\otimes\xi_1^*\xi_1^*
+\frac{b}{a}\xi_2^*\otimes\xi_2^*\xi_2^*
+\frac{b}{a}\xi_3^*\otimes\xi_3^*\xi_3^*
\nonumber\\
&&+\frac{b}{a}\xi_1^*\xi_1^*\otimes\xi_1^*
+\frac{b}{a}\xi_2^*\xi_2^*\otimes\xi_2^*
+\frac{b}{a}\xi_3^*\xi_3^*\otimes\xi_3^*
+\xi_3^*\xi_2^*\xi_1^*\otimes {\mathbf 1}.\label{coprodof3Skl}
\end{eqnarray}
The pairing on $A^!$ given by \eqref{coprodof3Skl} is cyclic, and therefore $A$ is 3-Calabi-Yau. 

Now, $\tilde R$ is the DG algebra freely generated by those elements in 
\eqref{generatorsofdualof3Skl} with degree shifted down by one.
From \eqref{coprodof3Skl} we see that
the $(-1)$-shifted bi-symplectic structure $\omega\in\mathrm{DR}^{2}_{\mathrm{nc}}\tilde R$
is given by
\begin{multline*}
\omega
=d({\mathbf 1})\otimes d(\xi_3^*\xi_2^*\xi_1^*)
+\frac{b}{a}d(\xi_1^*)\otimes d(\xi_1^*\xi_1^*)
+\frac{b}{a}d(\xi_2^*)\otimes d(\xi_2^*\xi_2^*)
+\frac{b}{a}d(\xi_3^*)\otimes d(\xi_3^*\xi_3^*)
\\
+\frac{b}{a}d(\xi_1^*\xi_1^*)\otimes d(\xi_1^*)
+\frac{b}{a}d(\xi_2^*\xi_2^*)\otimes d(\xi_2^*)
+\frac{b}{a}d(\xi_3^*\xi_3^*)\otimes d(\xi_3^*)
+d(\xi_3^*\xi_2^*\xi_1^*)\otimes d({\mathbf 1}).
\end{multline*}
Note that here (as well as in the next example) $d({\mathbf 1})\ne 0$.
\end{example}

\begin{example}[4-dimensional Sklyanin algebras]
Let $\alpha, \beta,\gamma\in k$ such that
$$\alpha+\beta+\gamma+\alpha\beta\gamma=0,\quad \{\alpha,\beta,\gamma\}\cap\{0,\pm 1\}=\emptyset.$$
The 4-dimensional Sklyanin algebra $A=A(\alpha,\beta,\gamma)$
is the graded $k$-algebra with generators $x_0,x_1,x_2,x_3$ of
degree one, and relations $f_i=0$, where
\begin{eqnarray*}
f_1=x_0x_1-x_1x_0-\alpha(x_2x_3+x_3x_2),&& f_2=x_0x_1+x_1x_0-(x_2x_3-x_3x_2),\\
f_3=x_0x_2-x_2x_0-\beta(x_3x_1+x_1x_3),&& f_4=x_0x_2+x_2x_0-(x_3x_1-x_1x_3),\\
f_5=x_0x_3-x_3x_0-\gamma(x_1x_2+x_2x_1),&& f_6=x_0x_3+x_3x_0-(x_1x_2-x_2x_1).
\end{eqnarray*}
As proved by Smith and Stafford (\cite[Propositions 4.3-4.9]{SS}),
$A$ is Koszul, whose Koszul dual algebra $A^!$
is generated by $\xi_0,\xi_1,\xi_2,\xi_3$ with the following relations:
\begin{eqnarray}
&&  \xi_0^2=\xi_1^2=\xi_2^2=\xi_3^2=0,\nonumber\\
&&  2\xi_2\xi_3+(\alpha+1)\xi_0\xi_1-(\alpha-1)\xi_1\xi_0=0,\nonumber\\
&&  2\xi_3\xi_2+(\alpha-1)\xi_0\xi_1-(\alpha+1)\xi_1\xi_0=0,\nonumber\\
&&  2\xi_3\xi_1+(\beta+1)\xi_0\xi_2-(\beta-1)\xi_2\xi_0=0,\nonumber\\
&&  2\xi_1\xi_3+(\beta-1)\xi_0\xi_2-(\beta+1)\xi_2\xi_0=0,\nonumber\\
&&  2\xi_1\xi_2+(\gamma+1)\xi_0\xi_3-(\gamma-1)\xi_3\xi_0=0,\nonumber\\
&&  2\xi_2\xi_1+(\gamma-1)\xi_0\xi_3-(\gamma+1)\xi_3\xi_0=0.\nonumber
\end{eqnarray}
Smith and Stafford also showed that $A^!$ admits a non-degenerate symmetric pairing,
and hence $A$ is 4-Calabi-Yau.
In particular, 
in degree 4, we have
the following identities:
\begin{eqnarray*}
&&\xi_0\xi_i\xi_0\xi_i=-\xi_i\xi_0\xi_i\xi_0\quad
\mbox{for}\; 1\le i\le j, \quad
\xi_0\xi_i\xi_0\xi_j=0\quad\mbox{for}\; i\ne j,\\
&&\xi_0\xi_2\xi_0\xi_2=\frac{1+\gamma}{1-\beta}\frac{1+\alpha}{1-\gamma}\xi_0\xi_1\xi_0\xi_1,\\
&&\xi_0\xi_3\xi_0\xi_3=\frac{1+\alpha}{1-\gamma}\xi_0\xi_1\xi_0\xi_1.
\end{eqnarray*}
From the above two groups of identities, we obtain that
the Koszul dual coalgebra $A^{\ac}$ is isomorphic to a coalgebra
spanned by
$$\begin{array}{cl}
A_0^{\ac}: & {\mathbf 1}\\
A_1^{\ac}: &\xi_0^*,\xi_1^*,\xi_2^*,\xi_3^*\\
A_2^{\ac}: &\xi_1^*\xi_0^*,\xi_2^*\xi_0^*,\xi_3^*\xi_0^*,\xi_0^*\xi_1^*,\xi_0^*\xi_2^*,\xi_0^*\xi_3^*\\
A_3^{\ac}:&\xi_0^*\xi_1^*\xi_0^*,\xi_0^*\xi_2^*\xi_0^*,\xi_0^*\xi_3^*\xi_0^*,\xi_1^*\xi_0^*\xi_1^*\\
A_4^{\ac}:&\xi_1^*\xi_0^*\xi_1^*\xi_0^*,
\end{array}
$$
with the coproduct of $\xi_1^*\xi_0^*\xi_1^*\xi_0^*$ given by
\begin{multline*}
\Delta(\xi_1^*\xi_0^*\xi_1^*\xi_0^*)
={\mathbf 1}\otimes \xi_1^*\xi_0^*\xi_1^*\xi_0^*
+\xi_0^*\otimes \xi_1^*\xi_0^*\xi_1^*
-\xi_1^*\otimes \xi_0^*\xi_1^*\xi_0^*\\
-\frac{1-\beta}{1+\gamma}\frac{1-\gamma}{1+\alpha}\xi_2^*\otimes \xi_0^*\xi_2^*\xi_0^*
-\frac{1-\gamma}{1+\alpha}\xi_3^*\otimes \xi_0^*\xi_3^*\xi_0^*
-\xi_1^*\xi_0^*\otimes \xi_1^*\xi_0^* \\
-\frac{1-\beta}{1+\gamma}\frac{1-\gamma}{1+\alpha}\xi_2^*\xi_0^*\otimes \xi_2^*\xi_0^* 
-\frac{1-\gamma}{1+\alpha}\xi_3^*\xi_0^*\otimes \xi_3^*\xi_0^* 
+\xi_0^*\xi_1^*\otimes \xi_0^*\xi_1^* \\
+\frac{1-\beta}{1+\gamma}\frac{1-\gamma}{1+\alpha}\xi_0^*\xi_2^*\otimes \xi_0^*\xi_2^* 
+\frac{1-\gamma}{1+\alpha}\xi_0^*\xi_3^*\otimes \xi_0^*\xi_3^* 
-\xi_1^*\xi_0^*\xi_1^*\otimes \xi_0^*\\
+\xi_0^*\xi_1^*\xi_0^*\otimes \xi_1^*
+\frac{1-\beta}{1+\gamma}\frac{1-\gamma}{1+\alpha} \xi_0^*\xi_2^*\xi_0^*\otimes \xi_2^*
+ \frac{1-\gamma}{1+\alpha}\xi_0^*\xi_3^*\xi_0^*\otimes \xi_3^*
+\xi_1^*\xi_0^*\xi_1^*\xi_0^*\otimes {\mathbf 1}.
\end{multline*}
Thus the $(-2)$-shifted bi-symplectic structure 
$\omega\in\mathrm{DR}^{2}_{\mathrm{nc}}\tilde R$
is given by
\begin{multline*}
d({\mathbf 1})\otimes d(\xi_1^*\xi_0^*\xi_1^*\xi_0^*)
+d(\xi_0^*)\otimes d(\xi_1^*\xi_0^*\xi_1^*)
-d(\xi_1^*)\otimes d(\xi_0^*\xi_1^*\xi_0^*)\\
-\frac{1-\beta}{1+\gamma}\frac{1-\gamma}{1+\alpha}d(\xi_2^*)\otimes d(\xi_0^*\xi_2^*\xi_0^*)
-\frac{1-\gamma}{1+\alpha}d(\xi_3^*)\otimes d(\xi_0^*\xi_3^*\xi_0^*)
-d(\xi_1^*\xi_0^*)\otimes d(\xi_1^*\xi_0^* )\\
-\frac{1-\beta}{1+\gamma}\frac{1-\gamma}{1+\alpha}d(\xi_2^*\xi_0^*)\otimes d(\xi_2^*\xi_0^* )
-\frac{1-\gamma}{1+\alpha}d(\xi_3^*\xi_0^*)\otimes d(\xi_3^*\xi_0^* )
+d(\xi_0^*\xi_1^*)\otimes d(\xi_0^*\xi_1^*) \\
+\frac{1-\beta}{1+\gamma}\frac{1-\gamma}{1+\alpha} d(\xi_0^*\xi_2^*)\otimes d(\xi_0^*\xi_2^* )
+\frac{1-\gamma}{1+\alpha}d(\xi_0^*\xi_3^*)\otimes d(\xi_0^*\xi_3^* )
-d(\xi_1^*\xi_0^*\xi_1^*)\otimes d(\xi_0^*)\\
+ d(\xi_0^*\xi_1^*\xi_0^*)\otimes d(\xi_1^*)
+ \frac{1-\beta}{1+\gamma}\frac{1-\gamma}{1+\alpha}d(\xi_0^*\xi_2^*\xi_0^*)\otimes d(\xi_2^*)
+\frac{1-\gamma}{1+\alpha} d(\xi_0^*\xi_3^*\xi_0^*)\otimes d(\xi_3^*)
\\
+d(\xi_1^*\xi_0^*\xi_1^*\xi_0^*)\otimes d({\mathbf 1}).
\end{multline*}
\end{example}


\begin{ack}
This work is partially supported by NSFC No. 11671281 and 11890663.
We also thank R. Nest and 
Song Yang for helpful comments during the preparation
of the work, and especially the anonymous referee for carefully reading the paper 
and pointing out many inaccuracies and typos.
\end{ack}

\end{document}